\documentclass[12pt]{amsart}
\usepackage{amsthm, amstext, amsmath, amssymb, graphicx, verbatim, fullpage}

\usepackage{mathtools}
\usepackage{booktabs}
\usepackage{hyperref}
\usepackage{ mathrsfs }
\usepackage{algorithm}
\usepackage{xcolor}
\usepackage{versions}
\usepackage{enumerate}
\usepackage{graphicx}
\usepackage{tabularx}
\usepackage{relsize}

\usepackage{tikz-cd}
\usepackage{colonequals}

\usepackage[utf8]{inputenc}
\usepackage[english]{babel}

\numberwithin{equation}{section}

\newtheorem{thm}{Theorem}[section]
 
\newtheorem{lemma}[thm]{Lemma}
\newtheorem{cor}[thm]{Corollary}
\newtheorem{prop}[thm]{Proposition}
\theoremstyle{definition}
\newtheorem{defn}[thm]{Definition}
\newtheorem{question}[thm]{Question}
\newtheorem{conj}[thm]{Conjecture}
\newtheorem{example}[thm]{Example}
\newtheorem{obs}[thm]{Observation}

\newtheorem{remark}[thm]{Remark}

\newcommand{\A}{\mathbb{A}}
\newcommand{\Z}{\mathbb Z}
\newcommand{\N}{\mathbb N}
\newcommand{\R}{\mathbb R}
\newcommand{\C}{\mathbb C}
\newcommand{\Q}{\mathbb Q}

\newcommand{\shL}{\mathcal{L}}
\newcommand{\PP}{\mathbb{P}}

\newcommand{\ovl}{\overline}

\DeclareMathOperator{\Id}{Id}

\DeclareMathOperator{\PGL}{PGL}

\newcommand{\abs}[1]{\left|#1\right|}

\newcommand{\mc}{\mathcal}

\DeclareMathOperator{\rad}{rad}

\DeclareMathOperator{\Div}{Div}

\DeclareMathOperator{\Top}{top}
\DeclareMathOperator{\Alg}{alg}
\DeclareMathOperator{\arith}{arith}

\DeclareMathOperator{\Ind}{Ind}

\DeclareMathOperator{\Num}{Num}

\newcommand{\RS}{\mathrm{RS}}

\newcommand{\mult}{\mathrm{mult}}

\newcommand{\GLP}{\mathrm{GLP}}

\newcommand{\bk}{\mathbf{k}}
\newcommand{\rmd}{d}

\DeclareMathOperator{\Dom}{Dom}

\begin{document}
 
\title{Degree Growth of Skew Pentagram Maps}
\author{Max Weinreich}

\begin{abstract}
    Skew pentagram maps act on polygons by intersecting diagonals of different lengths. They were introduced by Khesin-Soloviev in 2015 as conjecturally non-integrable generalizations of the pentagram map, a well-known integrable system. In this paper, we show that certain skew pentagram maps have exponential degree growth and no preserved fibration. To formalize this, we introduce a general notion of first dynamical degree for lattice maps, or shift-invariant self-maps of $(\PP^N)^\Z$. We show that the dynamical degree of any equal-length pentagram map is $1$, but that there are infinitely many skew pentagram maps with dynamical degree $4$.
\end{abstract}
\maketitle

\section{Introduction} \label{sect_intro}

\begin{defn}[\cite{schwartz}] \label{def_penta_intro}
    Let $\PP^2$ denote the projective plane over a field $\bk$. A \emph{closed $n$-gon} $v$ is an ordered $n$-tuple of points $(v_1,\ldots, v_n)\in(\PP^2)^n$, cyclically indexed. The \emph{space of closed $n$-gons} is the variety $(\PP^2)^n$. Each pair of distinct points $v_1, v_2 \in \PP^2$ determines a unique line, denoted $\overline{v_1 v_2}$. For each $n\geq 4$, the \emph{pentagram map} on closed $n$-gons is the rational map 
    \[T^{(n)}: (\PP^2)^n \dashrightarrow (\PP^2)^n\]
    that sends a generic $n$-gon $(v_1,\ldots, v_n)$ to $(v_1',\ldots, v_n')$, where for each $i$, 
    \begin{equation} \label{eq_v_classic_intro}
    v_i' = \ovl{v_{i}v_{i+2}}\cap\ovl{v_{i+1}v_{i+3}}.
    \end{equation}
\end{defn}

The pentagram map is an integrable system in many different senses of the term. It admits an invariant fibration \cite{schwartz}, is discrete Liouville-Arnold integrable over $\R$ \cite{OST,OSTclosed}, and is algebraically completely integrable over more general fields \cite{soloviev, weinreich2021}. Further, the iterates of the pentagram map have explicit algebraic formulas of polynomial degree growth \cite{MR4646096,MR4865926}. 

Khesin-Soloviev introduced skew pentagram maps, a generalization of \eqref{eq_v_classic_intro} in which the diagonals are allowed to have different combinatorial lengths \cite{MR3282373}.
To capture the general-case behavior of skew pentagram maps, we work with infinite polygons.

\begin{defn}[\cite{MR3282373}] \label{def_sk_penta_intro}
    An (infinite) \emph{polygon} is a sequence $v \in (\PP^2)^\Z$. The \emph{space of polygons} is the set $(\PP^2)^\Z$. Given $a,b,c,d\in \Z$ distinct, the \emph{skew pentagram map} with parameters $(a,b,c,d)$ on infinite polygons is the partially defined self-map 
    \[T_{a,b,c,d}: (\PP^2)^\Z \dashrightarrow (\PP^2)^\Z,\]
    that sends $v=(v_i)$ to $v'=(v_i')$, where for each $i \in \Z$,
    \begin{equation}\label{eq_sk_intro}
        v'_i=\ovl{v_{i+a}v_{i+b}}\cap \ovl{v_{i+c}v_{i+d}},
    \end{equation}
    whenever all these intersection points are defined.
    When $\abs{b-a} = \abs{d-c}$, we say that $T_{a,b,c,d}$ is \emph{equal-length}. Otherwise, we say that it is \emph{truly skew}. See Figure \ref{fig_classical_vs_skew} and Figure \ref{fig_assortedskew}.

    We also work with skew pentagram maps on closed $n$-gons when $a, b, c, d$ are distinct modulo $n$. We denote these maps
    \[T^{(n)}_{a,b,c,d}: (\PP^2)^n \dashrightarrow (\PP^2)^n.\]
\end{defn}

\begin{figure}[b]
    \centering
    \includegraphics[width=0.75\linewidth]{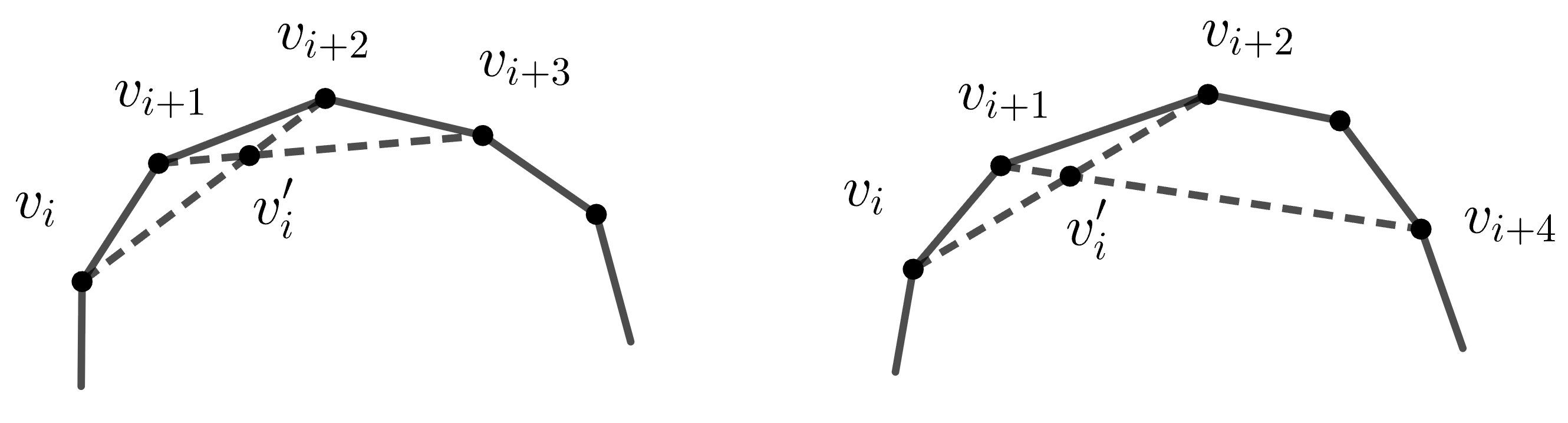}
    \caption{Left: the classical pentagram map $T_{0,2,1,3}$. Right: the skew pentagram map $T_{0,2,1,4}$.}
    \label{fig_classical_vs_skew}
\end{figure}

\begin{figure}[b]
    \centering
    \includegraphics[width=0.75\linewidth]{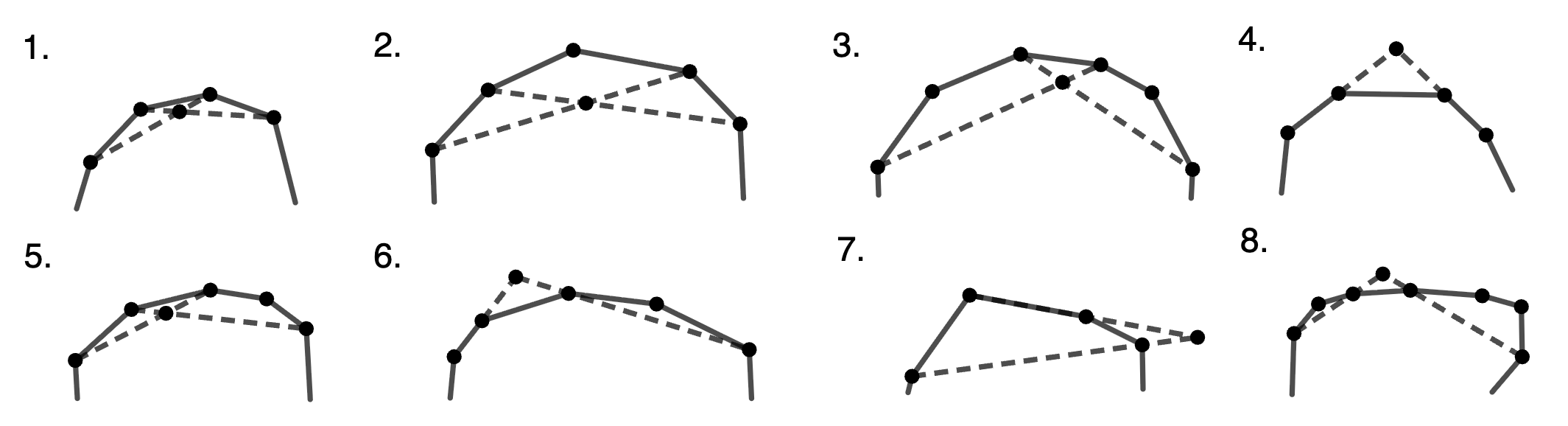}
    \caption{An assortment of skew pentagram maps. In each example, the new $i$-th vertex is the intersection of the dotted lines. The first four examples are equal-length, and the last four are truly skew.}
    \label{fig_assortedskew}
\end{figure}

Equal-length maps include the classical pentagram map $T_{0,2,1,3}$, its inverse $T_{-3,-2,-1,0}$, and the deep-diagonal maps $T_{0,b,1,1+b}$ \cite{textbookcasepentagramrigidity,MR4761767,MR3534837,zou2025spiralstictactoepartitiondeep}.

Khesin-Soloviev introduced skew pentagram maps with an eye towards classifying integrable generalizations of the pentagram map. They computed the first few iterates of randomly chosen polygons with coordinates in $\Q$. These experiments indicated that the number of digits in the denominators of the vertices grew polynomially in the case of equal-length maps, but exponentially for truly skew maps \cite{MR3282373}. Many works since have established various forms of integrability for particular equal-length cases and their higher-dimensional generalizations \cite{MR3356734,izo_khes,khesinsoloviev,MR3438382,izosimov,MR3534837,MR3483131}. However, non-integrability of truly skew maps has yet to be proved. 

In this paper, we continue the study of integrability versus non-integrability of skew pentagram maps from an algebraic point of view. While equal-length maps are invertible (Proposition \ref{prop_inv_Z}), it turns out that truly skew maps may have arbitrarily large generic topological degree (Corollary \ref{cor_unbounded_top}). We therefore consider integrability phenomena that make sense for non-invertible maps: degree growth and existence of invariant fibrations. 

Degree growth is often used as an integrability test in the mathematical physics literature; see e.g. \cite{MR4906745, MR2392894, MR2481234,MR4827588,MR4033822, MR2358970}. In the context of rational self-maps $f$ of varieties over algebraically closed fields, degree growth is described by the \emph{dynamical degree} $\lambda_1(f)$, the exponential degree growth rate \cite{MR1704282,MR4007163, dangsingular}. 

To make sense of degree growth in the general setting of pentagram-like maps, we define \emph{lattice maps}, shift-equivariant self-maps of $(\PP^N)^\Z$ associated to local rules such as \eqref{eq_sk_intro}; see Section \ref{sect_latt} for the full definition. Lattice maps have a different flavor than rational self-maps of varieties due to the way that information diffuses through the domain over time. With lattice maps, the number of variables in the formulas increases under iteration; for instance, each vertex of $T_{a,b,c,d} \circ T_{a,b,c,d}$ depends on $10$ initial vertices in the truly skew case, and $9$ in the equal-length case.
Other examples of lattice maps include the the projective heat map \cite{MR3642552}, the projective evolute map \cite{MR4790973}, and the octahedron recurrence \cite{MR2317336}.

A lattice map $f$ has an \emph{algebraic degree} $\deg f$, defined as the degree of the homogeneous polynomial formula describing the local rule of $f$ when this formula is written without common factors. The \emph{dynamical degree} $\lambda_1(f)$ of $f$ is the exponential degree growth rate  
    \begin{equation}
        \lambda_1(f) = \lim_{m\to \infty} (\deg f^m)^{1/m}.
        \label{eq_dd_intro}
    \end{equation}
One typically expects that $\deg (f^m) = (\deg f)^m$, so $\lambda_1(f) = \deg f$. However, since the iterates must be written in lowest terms, unexpected common factors may lead to lower-than-anticipated degree growth. To calculate dynamical degrees, one must control the appearance of these common factors.

Our main result shows that, while equal-length maps are integrable from the point of view of degree growth, there are infinitely many non-integrable truly skew pentagram maps.

\begin{thm}\label{thm_main}
    Let $a,b,c,d\in \Z$ be distinct integers, and let $T_{a,b,c,d}:(\PP^2_\C)^\Z\dashrightarrow(\PP^2_\C)^\Z$ be the skew pentagram map with parameters $(a,b,c,d)$ over $\C$.
    \begin{enumerate}
        \item If $T_{a,b,c,d}$ is equal-length $(\abs{b-a}= \abs{d-c})$, then $\lambda_1(T_{a,b,c,d}) = 1$. 
        \item There are infinitely many truly skew pentagram maps $T_{a,b,c,d}$ with $\lambda_1(T_{a,b,c,d})=4$. \label{it_thm_main_nonint}
    \end{enumerate}
\end{thm}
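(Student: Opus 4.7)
The plan is to handle the two parts with distinct strategies, since part (1) is a polynomial-growth upper bound leveraging the integrability structure of equal-length maps, while part (2) requires a non-trivial lower bound on a dynamical degree.

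For part (1), I would first use the natural symmetries of lattice maps---cyclic shifts, orientation reversal, and index translation---to reduce to a canonical form $T_{0, k, j, j+k}$ for the equal-length case. I would then exhibit an invariant fibration: equal-length pentagram maps should preserve cross-ratio--type corner invariants generalizing Schwartz's classical construction. On this fibration the dynamics becomes essentially linear, and in appropriate cluster or Y-pattern coordinates (as used in \cite{MR4646096,MR4865926} for the classical case) the iterates $T^m$ admit closed-form expressions of polynomial-in-$m$ degree, forcing $\lambda_1(T) = 1$. The technical point is to make the coordinate change uniform in $j$ and $k$; since the deep-diagonal family $T_{0, k, 1, k+1}$ is already covered in \cite{MR4761767, MR3534837, zou2025spiralstictactoepartitiondeep}, a case analysis together with a uniform construction for the remaining $j$ should suffice.

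For part (2), I would exhibit an explicit infinite family. A natural candidate is $\{T_{0, 1, 2, k}\}_{k \geq 4}$, since $|b-a| = 1 \neq k-2 = |d-c|$ makes each member truly skew. The upper bound $\lambda_1 \leq \deg T_{0,1,2,k} = 4$ follows at once from the double cross-product formula in homogeneous coordinates. For the matching lower bound $\lambda_1 \geq 4$, I need to rule out common factors in iterates. Concretely, I would first compute the formula for $T^2$ in homogeneous coordinates, identify the irreducible common factors (each corresponding to a geometric incidence among the ten input vertices feeding a single output coordinate), and show these incidences fail generically for the truly skew regime. To extend to all iterates $T^m$, I would either build an algebraically stable model by blowing up the relevant indeterminacy loci on a suitable truncation to $(\PP^2)^n$, or use a valuative/tropical certificate: pick a polygon whose tropicalization has generic slopes and show that the tropicalized iterate of $T$ has linear-rate-$\log 4$ growth in coordinate valuations. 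Either route then gives $\deg T^m = 4^m$.

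The main obstacle is the lower bound in part (2). While the intuition that non-incidence should force no cancellation is clear, making this rigorous across all $m$ requires a careful framework: lattice maps lack a fixed ambient variety of bounded dimension, so the standard surface-dynamics tools (Bedford--Kim, Diller--Favre) do not apply directly, and one has to track how the support of $T^m$ spreads along $\Z$ as $m$ grows. The tropical route looks the most promising, since it reduces the lower bound to a concrete combinatorial claim about a single sufficiently generic polygon's orbit. To pass from a single $k$ to an infinite family, I would verify that the tropical argument is uniform in $k$, i.e., that the exceptional loci avoided by the generic polygon do not accumulate obstructions as $k$ varies in the chosen family.
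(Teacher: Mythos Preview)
Your plan for part (1) is in the right neighborhood but misidentifies the mechanism. The paper does not construct an invariant fibration or use corner invariants; instead it applies Menelaus' theorem to show that the $x$- and $y$-coordinates of the vertices along an orbit satisfy the Schwarzian octahedron (dSKP) recurrence, then invokes the polynomial degree growth of dSKP from \cite{MR4646096}. Your cluster/Y-pattern route might be made to work, but ``exhibit an invariant fibration and linearize on it'' is not what the cited references actually provide for general $j$, and the case analysis you propose is not carried out there either. The Menelaus--dSKP embedding handles all equal-length parameters uniformly once you reduce to $\gcd(b,c)=1$.

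Part (2) has a genuine gap: you have correctly identified the difficulty (no fixed ambient variety, so surface-dynamics tools do not apply directly), but your proposed workarounds do not resolve it. Showing $\deg T^m = 4^m$ by ruling out common factors in all iterates is exactly the hard problem, and neither the blowup on $(\PP^2)^n$ (which is $2n$-dimensional, so the usual algebraic-stability machinery is unavailable) nor the tropical certificate is made concrete enough to carry weight. The paper's key idea, which you are missing, is an \emph{entropy sandwich via dimension reduction}: restrict the single map $T_{0,2,1,4}^{(8)}$ to the moduli space of rotationally $4$-symmetric octagons, which is a \emph{surface}. On that surface one writes down the induced map $f:\PP^2\dashrightarrow\PP^2$ explicitly, blows up three points to obtain an algebraically stable model, and reads off $\lambda_1=4$ from the pullback matrix on $\Num$. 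The subsystem inequality then gives $\lambda_1(T_{0,2,1,4})\ge 4$, matching the easy upper bound. The infinite family is then obtained not by varying $k$ in $T_{0,1,2,k}$ but by noting that any $(a,b,c,d)\equiv(0,2,1,4)\pmod 8$ induces the same map on closed $8$-gons, so the same lower bound applies. Your proposed family $\{T_{0,1,2,k}\}$ would require a separate argument for each $k$, whereas the paper's trick reuses one computation infinitely often.
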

Since each vertex of $T_{a,b,c,d}(v)$ depends linearly on $4$ vertices of $v$, there is a quick upper bound $\lambda_1(T_{a,b,c,d}) \leq \deg T_{a,b,c,d} = 4$; see Proposition \ref{prop_upper}. So Theorem \ref{thm_main} shows that infinitely many truly skew maps have maximal algebraic entropy given the natural constraints on the class of skew pentagram maps.

We now reinterpret the experiment \cite{MR3282373} that prompted the present work. If $x\in \PP^N(\Q)$, then we may write $x = [x_0:\cdots:x_N]$, where all $x_j \in\Z$ and $\gcd(x_0,\ldots, x_n) = 1$. The \emph{height} of $x$ is defined as $h(x) = \log \max_j \abs{x_j}$. The \emph{height} of a closed $n$-gon $v\in (\PP^2)^n(\Q)$ with rational coordinates is defined as $\max_i h(v_i)$. This definition extends to algebraic polygons $x \in (\PP^2)^n(\bar{\Q})$; see e.g. \cite{Silverman10}. The \emph{arithmetic degree} of a skew pentagram map $T= T_{a,b,c,d}^{(n)}$ is the exponential height growth rate
\[\alpha_1(T) = \sup_{v\in(\PP^2)^n(\bar{\Q})} \lim_{m\to \infty} h(T^m(v))^{1/m}.\]
The \emph{arithmetic entropy} of $T$ is defined as $h_{\arith} (T) = \log \alpha_1(f)$.

By Matsuzawa-Xie's work on the Kawaguchi-Silverman conjecture, arithmetic and dynamical degrees of maps defined over $\bar{\Q}$ are equal \cite[Theorem 1.2]{MR4958582}. We use this to confirm the predictions of Khesin-Soloviev in an infinite family of examples. 

\begin{cor} \label{cor_height}
    All dominant equal-length maps on closed polygons have arithmetic degree $1$. In contrast, there are infinitely many choices of parameters $(a,b,c,d)$ such that, for infinitely many values of $n$, the skew pentagram map $ T^{(n)}_{a,b,c,d}$ on closed $n$-gons has arithmetic degree $\alpha_1( T^{(n)}_{a,b,c,d}) = 4$. 
\end{cor}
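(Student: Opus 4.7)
The corollary follows by combining Theorem \ref{thm_main} with the Kawaguchi-Silverman conjecture. Since $(\PP^2)^n$ is a smooth projective variety over $\bar{\Q}$ and each dominant $T^{(n)}_{a,b,c,d}$ is a rational self-map, \cite[Theorem 1.2]{MR4958582} yields $\alpha_1(T^{(n)}_{a,b,c,d}) = \lambda_1(T^{(n)}_{a,b,c,d})$. The task thus reduces to computing dynamical degrees of the closed-polygon maps and comparing them with their lattice counterparts handled by Theorem \ref{thm_main}.

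For the equal-length case, the local rule of the closed-polygon iterate $(T^{(n)})^m$ is obtained from the lattice iterate $T^m$ by reducing all indices modulo $n$, i.e., by substituting variables. Such substitution can only preserve or lower the total polynomial degree (and writing the result in lowest terms, which may introduce additional common factors, can only lower it further), so $\deg (T^{(n)})^m \leq \deg T^m$. By Theorem \ref{thm_main}(1), the right-hand side grows polynomially in $m$, hence so does the left-hand side, giving $\lambda_1(T^{(n)}_{a,b,c,d}) = 1$.

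For the truly skew case, fix any $(a,b,c,d)$ with $\lambda_1(T_{a,b,c,d}) = 4$ from Theorem \ref{thm_main}(2); infinitely many such parameter tuples exist. The upper bound $\lambda_1(T^{(n)}) \leq 4$ is immediate from Proposition \ref{prop_upper}. For the lower bound, the plan is to exploit the locality of cancellations: in the lattice iterate $T^m$, the common factors causing the drop below the naive count $4^m$ come from algebraic identities involving bounded windows of consecutive vertices. For $n$ in a suitable infinite family, namely $n$ large enough that the relevant windows embed injectively into $\Z/n\Z$ and chosen generically with respect to $(a,b,c,d)$, the closed-polygon iterate $(T^{(n)})^m$ inherits exactly those cancellations, with no new ones arising from periodization. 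Hence $\deg (T^{(n)})^m = \deg T^m$ for all $m$, and $\lambda_1(T^{(n)}_{a,b,c,d}) = 4$.

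The main obstacle is that the cancellation windows in $T^m$ grow with $m$, while $\lambda_1(T^{(n)})$ is a limit as $m \to \infty$ for fixed $n$; one cannot simply choose $n$ large enough to match a single iterate. A usable lattice-to-closed-polygon inheritance statement must therefore come from structural features of the proof of Theorem \ref{thm_main}(2), specifically a tracking of how common factors propagate under iteration, producing a cofinal set of $n$ for which the cancellation pattern of \emph{every} iterate of $T^{(n)}$ matches that of $T$. With such an inheritance in place, $\lambda_1(T^{(n)}) = \lambda_1(T) = 4$, and Matsuzawa-Xie completes the conversion to arithmetic degrees.
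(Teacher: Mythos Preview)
Your treatment of the equal-length case is correct and matches the paper's Corollary \ref{cor_equal_len_dd}: reducing indices modulo $n$ cannot increase degrees, so $\lambda_1(T^{(n)}_{a,b,c,d}) \leq \lambda_1(T_{a,b,c,d}) = 1$, and Matsuzawa--Xie finishes.

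The truly skew case, however, has a genuine gap, which you yourself name: your ``inheritance'' argument requires that for a fixed $n$ the cancellation pattern of \emph{every} iterate of $T^{(n)}$ match that of $T$, yet the support of the $m$-th lattice iterate has width growing linearly in $m$, so no finite $n$ can avoid wraparound for all $m$. You then defer to unspecified ``structural features of the proof of Theorem \ref{thm_main}(2),'' but no such cancellation-tracking is carried out anywhere in the paper, and it is not clear how one would do it.

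The paper sidesteps this entirely with a subsystem argument. The parameters in Theorem \ref{thm_main}(2) are chosen so that $(a,b,c,d)\equiv(0,2,1,4)\pmod 8$; hence $T^{(8n)}_{a,b,c,d}$ restricts, on the $8$-periodic sublocus of $(\PP^2)^{8n}$, to $T^{(8)}_{0,2,1,4}$. Lemma \ref{lem_subsystem} then gives
\[
\lambda_1\bigl(T^{(8n)}_{a,b,c,d}\bigr)\;\geq\;\lambda_1\bigl(T^{(8)}_{0,2,1,4}\bigr)\;=\;4,
\]
the last equality being the hard computation of Section \ref{sect_truly} via the algebraically stable surface model. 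Combined with the upper bound $\lambda_1(T^{(8n)}_{a,b,c,d})\leq 4$ from Proposition \ref{prop_upper} (applied via Lemma \ref{lem_dd_periodic_comparison}), this pins $\lambda_1(T^{(8n)}_{a,b,c,d})=4$ for infinitely many $n$, with dominance supplied by Lemma \ref{lem_dom_quadrilateral_trick}. No control over cancellations in the lattice iterates is needed: the lower bound comes from a single finite-dimensional computation, propagated upward by the invariant-subsystem inequality.
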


Similarly, applying general entropy comparison theorems of Dinh-Sibony, Guedj, and Dinh-Nguyen-Truong, we have positive topological entropy in these examples (Corollary \ref{cor_rs_consequences}) \cite{dinhnguyen,guedj,MR3436236}.

Based on Khesin-Soloviev's height growth experiments and Theorem \ref{thm_main}, we state the following conjecture.

\begin{conj}
All truly skew pentagram maps $T_{a,b,c,d}$ have dynamical degree $4$.
\end{conj}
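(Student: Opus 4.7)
The upper bound $\lambda_1(T_{a,b,c,d}) \leq 4$ is already handed to us by Proposition \ref{prop_upper}, so the content of the conjecture is the matching lower bound $\lambda_1(T_{a,b,c,d}) \geq 4$ for every truly skew parameter tuple. Theorem \ref{thm_main}\eqref{it_thm_main_nonint} already supplies the bound for an infinite family, so the real task is to extend from ``infinitely many $(a,b,c,d)$'' to ``all truly skew $(a,b,c,d)$.''

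The first step is to cut down the parameter space using the evident symmetries: cyclic reindexing allows us to set $a = 0$; swapping $(a,b)$ within a pair or interchanging the pairs $(a,b)\leftrightarrow (c,d)$ preserves the local rule; and orientation reversal $i \mapsto -i$ conjugates $T_{0,b,c,d}$ to $T_{0,-b,-c,-d}$ shifted. This reduces the infinite parameter space to one canonical sector (say $0 < b$, $c < d$, with $\{0,b\} \neq \{c,d\}$ and $b - 0 \neq d - c$), but of course the sector is still infinite, so the argument must eventually be uniform.

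The main approach I would pursue is \emph{algebraic stability} in the sense of Diller--Favre, adapted to lattice maps. In the finite-dimensional setting one shows that no exceptional hypersurface of $T^k$ is eventually mapped into the indeterminacy locus of $T$; this forces $\deg(T^m) = (\deg T)^m$ on an appropriate cohomology class and hence $\lambda_1 = \deg T = 4$. For skew pentagram maps I would formulate the analogue at the level of local rules: the generic formulas for $T^m$ are products of $4$-linear brackets in shifted coordinates, and any common factor that appears uniformly across iterates must arise from a ``syzygy'' among these brackets. The equal-length case admits many such syzygies (reflecting its integrable fibration), while for truly skew parameters one expects none beyond the trivial ones. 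Concretely, I would pass to a closed polygon $T^{(n)}_{a,b,c,d}$ on $(\PP^2)^n$ for $n$ sufficiently large relative to $\max(|a|,|b|,|c|,|d|)$ and verify, via Proposition \ref{prop_upper} type bookkeeping, that $\lambda_1(T^{(n)}_{a,b,c,d}) = \lambda_1(T_{a,b,c,d})$; then the question becomes the classical one of algebraic stability of a rational self-map of a variety.

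The hardest step, I expect, is ruling out non-trivial syzygies uniformly in $(a,b,c,d)$. Two complementary strategies seem feasible. First, a degeneration argument: embed the family in a tropical or combinatorial limit where the skew pentagram map becomes a piecewise-linear shift-invariant map on $(\R^3)^\Z$, compute the tropical degree growth by studying the cone-wise linear dynamics, and transfer a lower bound back via upper semicontinuity of degrees of each fixed iterate (so that $\deg(T^m) \geq $ tropical degree). Second, an arithmetic input: by \cite[Theorem 1.2]{MR4958582} it suffices to exhibit, for each truly skew $(a,b,c,d)$, a single point $v \in (\PP^2)^n(\bar{\Q})$ whose orbit has arithmetic degree $4$. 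One could try to produce such $v$ by deforming a point of maximal arithmetic degree from a proved case (guaranteed by Corollary \ref{cor_height}) across the parameter family while maintaining genericity; the lower semicontinuity of arithmetic degree along algebraic families would then finish the argument. I believe the degeneration/tropical route is the more robust of the two, because the combinatorial structure of the $4$-bracket formula is so rigidly determined by the quadruple $(a,b,c,d)$ that a direct count in the tropical limit should cleanly separate the truly skew case from the equal-length one.
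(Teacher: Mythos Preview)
The statement you are addressing is a \emph{conjecture} in the paper, not a theorem: the paper offers no proof of it, only evidence (Theorem \ref{thm_main} and the height experiments of Khesin--Soloviev). So there is no ``paper's own proof'' to compare against, and any complete argument would be a new result. Your write-up is honest about this --- it is a research plan, not a proof --- but it should not be labeled a proof.

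As a plan, the outline has reasonable ingredients but also several soft spots that would have to be repaired before any of the routes could succeed. First, you assert in passing that $\lambda_1(T^{(n)}_{a,b,c,d}) = \lambda_1(T_{a,b,c,d})$ for $n$ large; the paper only proves the inequality $\lambda_1(T^{(n)}) \le \lambda_1(T)$ (Lemma \ref{lem_dd_periodic_comparison}), and the reverse direction is exactly what one would need and is not obvious. Second, the semicontinuity claims go the wrong way or are not known: for a fixed iterate the algebraic degree is \emph{lower} semicontinuous in families (it can only drop when common factors appear), and the first dynamical degree is likewise lower semicontinuous, so a tropical or boundary specialization gives an \emph{upper} bound, not the lower bound you need. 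The arithmetic route has the analogous problem: there is no established ``lower semicontinuity of arithmetic degree along algebraic families'' to invoke, and deforming a point of maximal arithmetic degree across the parameter family does not automatically preserve that property. Finally, the heuristic that truly skew parameters admit no nontrivial syzygies among the $4$-linear brackets is precisely the hard, unproved content of the conjecture; restating it as an expected algebraic-stability statement does not reduce the difficulty.
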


The proof of Theorem \ref{thm_main} uses complementary techniques to study the equal-length and truly skew cases. Our analysis of equal-length maps closely follows Affolter-de Tili\`ere-Melotti \cite{MR4646096,MR4865926}. We use Menelaus' Theorem in elementary projective geometry to embed equal-length dynamics in the Schwarzian octahedron recurrence, an integrable system with polynomial degree growth.

To study truly skew maps, we restrict to closed $n$-gons and use techniques from the theory of dynamical degrees of rational maps. However, computing dynamical degrees in higher dimensions is extremely difficult, and the dimension of this domain is $2n$. We need a trick to reduce the dimension of the computation. Since $\lambda_1(T_{a,b,c,d}) \leq 4$ by Proposition \ref{prop_upper}, it suffices to find an invariant subsystem where the map's restriction has dynamical degree at least $4$; we think of this as the ``entropy sandwich'' strategy. We first consider the truly skew map $T_{0,2,1,4}$, which we study via the induced map $\bar{T}_{\RS}$ on the moduli space of rotationally symmetric closed octagons, a surface. Via an appropriate blowup, we construct an algebraically stable model for $\bar{T}_{\RS}$, and find that $\lambda_1(\bar{T}_{\RS}) = 4$, providing the desired lower bound for the lattice map $T_{0,2,1,4}$ on infinite polygons. But many other truly skew pentagram maps happen to agree with $T_{0,2,1,4}$ on closed $8$-gons just by reducing the parameters modulo $8$, so this technique calculates the dynamical degree for a positive-density subset of the space of all possible parameters $(a,b,c,d)$.

Dynamical degrees can sometimes be used to rule out existence of preserved fibrations. Using this, we obtain a different kind of non-integrability for this particular skew pentagram map.

\begin{thm} \label{thm_no_fib}
    The map $\ovl{T}_{\RS}$ induced by the skew pentagram map $T_{0,2,1,4}$ on the moduli space of closed $8$-gons with $4$-fold rotational symmetry does not preserve any nontrivial rational fibration.
\end{thm}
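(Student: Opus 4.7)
The plan is to argue by contradiction using the explicit algebraically stable model of $\ovl{T}_{\RS}$ constructed in the proof of Theorem \ref{thm_main}(\ref{it_thm_main_nonint}). Suppose $\ovl{T}_{\RS}$ preserves a nontrivial rational fibration $\pi: \ovl{X}_{\RS} \dashrightarrow C$ to a curve $C$, so that $\ovl{T}_{\RS}$ is semi-conjugate via $\pi$ to a map $g: C \dashrightarrow C$. Let $d_1 = \deg g$ and let $d_2$ be the degree of the restriction of $\ovl{T}_{\RS}$ to a generic fiber. The Dinh-Nguyen product formula \cite{dinhnguyen} gives
\[
\lambda_1(\ovl{T}_{\RS}) = \max(d_1, d_2), \qquad \lambda_2(\ovl{T}_{\RS}) = d_1 d_2.
\]
Since $\lambda_1(\ovl{T}_{\RS}) = 4$ by Theorem \ref{thm_main}(\ref{it_thm_main_nonint}), the base degree $d_1$ is an integer in $\{1, 2, 3, 4\}$.

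Next I would exploit the fiber class. After additional blow-ups, I may assume the algebraically stable model $\tilde{X}$ from the proof of Theorem \ref{thm_main} carries $\pi$ as a morphism to $C$. The class $[F]$ of a general fiber is then a nonzero nef element of $N^1(\tilde{X})_\R$ satisfying $[F]^2 = 0$ and $(\ovl{T}_{\RS})^*[F] = d_1 [F]$. Thus the existence of a fibration forces the matrix $M$ representing $(\ovl{T}_{\RS})^*$ on $N^1(\tilde{X})$, which was computed explicitly while establishing $\lambda_1 = 4$, to admit an integer eigenvalue $d_1 \leq 4$ whose eigenspace meets the nef cone in a nonzero class of self-intersection zero.

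I would conclude by ruling out this possibility case by case. For each integer eigenvalue of $M$ in $\{1, 2, 3, 4\}$, I would compute the corresponding eigenspace and evaluate the intersection form; by the Hodge index theorem, the set of isotropic nef directions in $N^1(\tilde{X})_\R$ is at most one-dimensional, so the search is sharply constrained. The Perron-Frobenius eigenvector for eigenvalue $4$ lies in the closure of the ample cone, and I expect to verify directly that it has strictly positive self-intersection, excluding $d_1 = 4$. For the remaining candidates $d_1 \in \{1, 2, 3\}$, I would check that either the eigenvalue is absent from the spectrum of $M$ or the corresponding eigenvectors fail to be simultaneously nef and isotropic. The main obstacle is the case $d_1 = 1$, where invariant classes may exist for trivial geometric reasons (for example pullbacks of divisors fixed by the combinatorics of the blow-up); verifying that none of them is both nef and isotropic will be the crux of the argument.
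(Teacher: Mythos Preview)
Your opening move is exactly the paper's: assume a fibration and invoke the product formula to write $\lambda_1(\ovl T_{\RS})=\max(d_1,d_2)$ and $\lambda_2(\ovl T_{\RS})=d_1d_2$ with $d_1,d_2\in\Z_{>0}$. But you then drop the second identity and never use it again. The paper does not: it has already computed $\lambda_2(\ovl T_{\RS})=\deg_{\Top} f=5$ (Lemma~\ref{lem_basic}(\ref{it_lem_basic_top_deg})), and together with $\lambda_1=4$ this is an immediate contradiction, since no positive integers have product $5$ and maximum $4$ (equivalently, $\lambda_1$ must divide $\lambda_2$ but $4\nmid 5$). That is the entire proof in the paper (Corollary~\ref{cor_rs_consequences}(2)). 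You had the contradiction in hand after two lines.

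The eigenvector route you propose instead has a genuine gap. You write: ``After additional blow-ups, I may assume the algebraically stable model $\tilde X$ \ldots\ carries $\pi$ as a morphism,'' and then proceed to test eigenspaces of the $4\times 4$ matrix $M$ from Lemma~\ref{lem_ac_on_coh}. These two steps are incompatible: once you blow up further, $\Num(\tilde X)$ enlarges and the pullback matrix is no longer $M$ (nor is algebraic stability automatically preserved). If instead you stay on the three-point blow-up $\mathcal X$ where $M$ is valid, the fibration $\pi$ is only rational, and then general fibers can meet at base points of $\pi$, so $[F]^2>0$ is possible; your isotropy test $[F]^2=0$ is not available. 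In particular your exclusion of $d_1=4$ (``the Perron--Frobenius eigenvector has strictly positive self-intersection'') is unjustified on $\mathcal X$, since that eigenclass $3\pi^*H-E_1-E_2-E_3$ is nef with self-intersection $6$, exactly the profile a base-pointed pencil could have. One can repair the eigenvector argument with more care, but the cleanest fix is simply to reinstate $\lambda_2=5$ and finish as the paper does.
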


Our analysis of $\bar{T}_{\RS}$ reveals striking similarities to Kaschner-Roeder's study of the heat map on the moduli space of pentagons, an inspiration for this work \cite{KR}. In our framework, their results show that the dynamical degree of the heat map on infinite polygons is $4$ (Corollary \ref{cor_heat}). The reader new to the topic of dynamical degrees will benefit from reading their treatment of the topic. 

In this article, we only study lattice maps on $(\PP^N)^\Z$, but the analogous definition makes sense for $V^G$, where $V$ is a variety and $G$ is a group. It would be interesting to see a definition of dynamical degree for such maps, as well as higher dynamical degrees. As in the case of rational self-maps of varieties, the technical work required to show existence of the limit is likely formidable. We intend to address such questions in future work.

\subsection*{Outline}
In Section 2, we introduce key well-known tools from the theory of dynamical degrees of rational maps.

In Section 3, we define lattice maps and their first dynamical degrees.

In Section 4, we begin to study skew pentagram maps, proving basic properties, such as dominance.

In Section 5, we give an upper bound on the dynamical degree of skew pentagram maps.

In Section 6, we study equal-length maps, proving the first part of Theorem \ref{thm_main}.

In Section 7, we study a truly skew map on rotationally symmetric octagons. We prove Theorem \ref{thm_no_fib} as Corollary \ref{cor_rs_consequences}, then prove the second part of Theorem \ref{thm_main} as Theorem \ref{thm_main_body}. Then we prove Corollary \ref{cor_height}.

\subsection*{Acknowledgments}
This work was supported by was supported by NSF Grant 2202752. The author thanks Yohsuke Matsuzawa, Jeffrey Diller, Niklas Affolter, Boris Khesin, and Anton Izosimov for helpful conversations related to this project. 

\section{Preliminaries} \label{sect_prelim}

\subsection{Dominant rational maps}
We work over an algebraically closed field $\bk$. Our convention is that a variety over $\bk$ is a reduced, separated scheme of finite type over $\bk$, not necessarily irreducible. A curve is a $1$-dimensional variety, and a surface is a $2$-dimensional variety. All varieties in this paper are quasi-projective. A rational map $f: X \dashrightarrow Y$ is the data of a Zariski dense open subset $U \subset X$ and a morphism $f_U: U \to Y$, up to the equivalence relation that $f = g$ whenever $f_U$ and $g_V$ agree on $U \cap V$. Each equivalence class $f$ has a representative $(U, f_U)$ that maximizes $U$ with respect to inclusion. The \emph{indeterminacy locus} of $f$, denoted $\Ind f$, is the complement of the maximal set $U$. If $f: X \dashrightarrow Y$ and $g: Y \dashrightarrow Z$, and for each component $V$ of $X$, the variety $f(V \smallsetminus \Ind f)$ is not contained in $\Ind g$, then we may define the composite $g \circ f$. In this situation, we say that $f$ and $g$ are composable. We say that $f$ and $g$ are \emph{inverses}, and write $f = g^{-1}$, if $g$ and $f$ are composable, $f$ and $g$ are composable, and
$$f \circ g = g \circ f = \Id.$$
We recursively define the \emph{iterates} of $f: X \dashrightarrow X$ by $f^0 = \Id$, and for all $m > 0$,
$$f^m = f \circ f^{m-1},$$
whenever these maps are composable. If all the iterates exist, we say $f$ is \emph{iterable}.

A rational map is called \emph{dominant} when its image is Zariski dense in the codomain. Dominant rational maps are composable, so they form a category.

If $f: X \dashrightarrow Y$ is a dominant rational map, and $Y$ is irreducible, there exists a Zariski open dense subset $U$ of $Y$ such that the cardinality of the fibers of $f|_{f^{-1}(U)}$ is constant. The value of this constant is the \emph{(generic) topological degree} of $f$, denoted $\deg_{\Top} f$.

Given $N \geq 1$, we denote the projective space of dimension $N$ over $\bk$ by $\PP^N$. Points in a projective space are written in homogeneous coordinates $[X_0 : \ldots : X_N]$. Projective spaces and their products are varieties, and our main setting of interest is rational maps between products of projective spaces.

Given $N_1, \ldots, N_r \geq 1$, endow $\PP^{N_i}$ with the homogeneous coordinates $X_0^{(i)}, \ldots, X_{N_i}^{(i)}$. Let 
$$\bk[X] = \bk[X_j^{(i)}: 0 \leq i \leq r, 0 \leq j \leq N_i].$$
An element of $\bk[X]$ is called \emph{multihomogeneous of multidegree $(d_0, \ldots, d_r)$} if, for each $0 \leq i \leq r$, it is homogeneous of degree $d_i$ in the coordinates $X_0^{(i)}, \ldots, X_{N_i}^{(i)}$. A tuple of multihomogeneous polynomials $(f_0, \ldots, f_M)$ in $\bk[X]$ of the same multidegree and defines a rational map
$$f : \PP^{N_1} \times \ldots \times \PP^{N_r} \dashrightarrow \PP^M,$$
$$(P_1, \ldots, P_r) \mapsto [f_0(P_1, \ldots, P_r) : \ldots : f_M(P_1, \ldots, P_r)]. $$
Every rational map $ \PP^{N_1} \times \ldots \times \PP^{N_r} \dashrightarrow \PP^M$ may be written in this form, and the defining tuple is unique up to simultaneous scaling of the components by elements of $\bk[X]$. So any such rational map $f$ has an associated tuple, unique up to scaling by units of $\bk$, in which the components $f_0, \ldots, f_M$ have no common factors of positive degree in $\bk[X]$. The \emph{multidegree} of $f$ is the common multidegree of $f_0, \ldots, f_M$ when written without common factors. 

The case of greatest interest for dynamics is when $r = 1$ and the domain and codomain are the same space $\PP^N$. In this situation, the multidegree of $f$ is just called the \emph{(algebraic) degree}, written $\deg f$. Note that $\deg f \neq \deg_{\Top} f$ in general. 

If $f$ is dominant, then $f$ is iterable. The next lemma gives a convenient criterion for checking dominance. Let $f = (f_0, \ldots f_N)$ be an $(N+1)$-tuple of homogeneous forms over $\bk$ in $X_0, \ldots, X_N$. Let $Df$ denote the matrix of partial derivatives
    \begin{equation}
        Df \colonequals \left( \frac{\partial f_i}{\partial X_j}\right)_{0 \leq i, j \leq N}. \label{eq_Df}
    \end{equation}
    
\begin{lemma} \label{lem_homog_jacobian}
    Let $f = (f_0, \ldots f_N)$ be an $(N+1)$-tuple of homogeneous forms over $\bk$ in $X_0, \ldots, X_N$, all of the same degree.
    If $Df \neq 0$, then $f$ defines a dominant rational map $\PP^N \dashrightarrow \PP^N$.
\end{lemma}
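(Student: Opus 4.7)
The plan is to lift to the affine cone: the tuple $(f_0, \ldots, f_N)$ defines a polynomial morphism $F\colon \A^{N+1} \to \A^{N+1}$ whose Jacobian matrix is exactly $Df$, and the rational map $f$ is the projectivization of $F$. I would argue in two stages: first that dominance of $F$ on $\A^{N+1}$ transfers to dominance of $f$ on $\PP^N$, and second that the hypothesis on $Df$ forces $F$ itself to be dominant.

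For the first stage, the homogeneity relation $F(tx) = t^d F(x)$ shows that $F(\A^{N+1})$ is invariant under scaling by $d$th powers. Since $\bk$ is algebraically closed, every nonzero scalar is a $d$th power, so both the image and its Zariski closure are $\bk^\times$-invariant. Hence $\overline{F(\A^{N+1})}$ is a cone, and if this cone has dimension $N+1$ it is forced to equal $\A^{N+1}$; projecting back to $\PP^N$ then gives a dense image, which is the definition of dominance.

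For the second stage, I read the hypothesis $Df \neq 0$ as saying the Jacobian determinant $\det Df$ is not the zero polynomial (the literal matrix reading is not sufficient, as $(X_0, X_1, 0)\colon \PP^2 \dashrightarrow \PP^2$ illustrates). Then at any point $x$ with $\det Df(x) \ne 0$, the morphism $F$ is étale and hence a local isomorphism; its image contains a Zariski-open neighborhood of $F(x)$ and so has dimension $N+1$. The main subtlety in the whole argument is reconciling the $(N+1)\times(N+1)$ Jacobian of $F$ with the $N\times N$ differential $df_p$ on $\PP^N$: Euler's identity $Df(x)\cdot x = d\, f(x)$ implies that the tautological fiber direction on the domain is automatically sent to the corresponding fiber direction in the codomain, so full rank of $Df(x)$ translates precisely to full rank of $df_p$ at $p = [x]$. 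Once this bookkeeping is in place, both stages are standard and the proof presents no serious further obstacle.
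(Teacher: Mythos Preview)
Your proposal is correct and follows essentially the same route as the paper: lift to the polynomial map $F\colon \A^{N+1}\to\A^{N+1}$, use nonvanishing of the Jacobian determinant to find a point where $dF$ is surjective and conclude $F$ is dominant, then descend to $\PP^N$. You supply more detail than the paper does (the cone argument for descent, the explicit reading of $Df\neq 0$ as $\det Df\neq 0$, and the Euler-identity remark), but the structure and key idea are the same.
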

    
\begin{proof}
    The map $F: \A^{N+1} \to \A^{N+1}$ defined by $(f_0, \ldots, f_N)$ has Jacobian matrix $Df(X_0, \ldots, X_N)$ at $(X_0, \ldots X_N)$, so if $Df \neq 0$, there exists a point $(X_0, \ldots, X_N)$ for which the differential of $F$ is surjective on the tangent space at that point. This shows that $F$ is dominant, so its projectivization $f$ is dominant as well. 
\end{proof}

\begin{defn}[\cite{MR1704282}] \label{def_dd1_pn}
    Suppose that $f: \PP^N \dashrightarrow \PP^N$ is iterable. The \emph{dynamical degree} $\lambda_1(f)$ of $f$ is the exponential degree growth rate  
    \begin{equation*}
        \lambda_1(f) = \lim_{m\to \infty} (\deg f^m)^{1/m}.
    \end{equation*}
    The \emph{algebraic entropy} of $f$ is the logarithm of its dynamical degree,
    $$h_{\Alg}(f) = \log \lambda_1 (f).$$
\end{defn}

\subsection{Dynamical degrees for self-maps of projective varieties}

Let $X$ be a smooth projective variety. Given $0 \leq i \leq \dim X$, let $\Num^i X$ denote the group of algebraic $i$-cycles on $X$, that is, the free abelian group formally generated by irreducible subvarieties of codimension $i$. In the case $i = 1$, this specializes to the group $\Num X$ of Weil divisor classes up to numerical equivalence. 

Given a dominant rational map $f: X \dashrightarrow Y$, there is an induced \emph{pullback} homomorphism $f^{*,i} : \Num^i Y \to \Num^i X$. If $i = 1$, we suppress the notation, simply writing
$$f^* : \Num Y \to \Num X.$$
Roughly speaking, if $\Delta$ is a divisor on $Y$, then $f^* \Delta$ is just $f^{-1}(\Delta)$, taking multiplicity and indeterminacy into account. The pullback operation is not functorial in general, although it is for morphisms. See \cite{MR3617981} for details. In this paper, we never work with the definition of $f^{*,i}$ directly, as Lemma \ref{lemma_pn_pullbacks} and Lemma \ref{lem_compute_pback} give algorithms for computing them in the cases we require.

The following definition extends Definition \ref{def_dd1_pn} to arbitrary smooth projective varieties. Further generalizations to nonsmooth, nonprojective varieties are possible, but we do not require them.

\begin{defn} \label{def_dd_rat}
Let $X$ be a smooth projective variety over $\bk$, and let $f:X \dashrightarrow X$ be a dominant rational self-map of $X$. Let $0 \leq i \leq \dim X$. If $\Delta$ is an ample divisor on $X$, then the \emph{$i$-th dynamical degree of $f$} is the limit of intersection multiplicities
$$\lambda_i(f) \colonequals \lim_{m \to \infty} \left( (f^m)^{*,i} \Delta^i \cdot \Delta^{\dim X-i} \right)^{1/m}.$$
Existence of the limit and independence of choice of $\Delta$ are proved in \cite[Theorem 1]{dangsingular}. Compatibility with Definition \ref{def_dd1_pn} is immediate from taking $X = \PP^N$ and $\Delta$ to be the hyperplane class; then $(f^m)^* \Delta \cdot \Delta^{N-1} = \deg f^m$.
\end{defn}

The next lemma is a simple consequence of Definition \ref{def_dd_rat}.
\begin{lemma} \label{lem_deg_top}
    If $f: X \dashrightarrow X$ is a dominant rational self-map of a smooth irreducible projective variety, then
    $\lambda_0(f) = 1$. If the base field $\bk$ has characteristic $0$, then $\lambda_{\dim X}(f) = \deg_{\Top}(f)$.
\end{lemma}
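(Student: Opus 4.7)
The plan is to unpack Definition \ref{def_dd_rat} directly in the two extreme cases $i = 0$ and $i = \dim X$, relying on general properties of pullbacks of cycles.

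For the first claim, take $i = 0$. Since $X$ is irreducible, $\Num^0 X \cong \Z$ is generated by the fundamental class $[X]$, and $\Delta^0 = [X]$. Any dominant rational self-map $g$ of $X$ satisfies $g^{*,0}[X] = [X]$, since pulling back the whole codomain recovers the whole domain. Applying this with $g = f^m$, the defining intersection number in Definition \ref{def_dd_rat} becomes $[X] \cdot \Delta^{\dim X} = \Delta^{\dim X}$, a fixed positive integer independent of $m$. Therefore $\lambda_0(f) = \lim_m (\Delta^{\dim X})^{1/m} = 1$.

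For the second claim, set $n = \dim X$ and consider $i = n$. Then $\Delta^n \in \Num^n X$ is a zero-cycle of degree $D \colonequals \deg(\Delta^n) > 0$. Since all closed points on an irreducible projective variety are numerically equivalent, $\Delta^n = D \cdot [\mathrm{pt}]$ in $\Num^n X$. The crux is the identity
\[(f^m)^{*,n}[\mathrm{pt}] = \deg_{\Top}(f^m) \cdot [\mathrm{pt}] \quad \text{in } \Num^n X.\]
To verify this, resolve the indeterminacy of $f^m$ by a proper birational morphism $\pi \colon \widetilde{X} \to X$ so that $\widetilde{f^m} \colonequals f^m \circ \pi$ is a morphism; by the functorial definition of $(f^m)^{*,n}$ for rational maps, $(f^m)^{*,n} = \pi_* \circ (\widetilde{f^m})^*$. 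For a very general point $p \in X$, the scheme-theoretic fiber $(\widetilde{f^m})^{-1}(p)$ consists of exactly $\deg_{\Top}(f^m)$ reduced points lying off the exceptional locus of $\pi$, where the characteristic $0$ assumption is used to guarantee generic separability of $\widetilde{f^m}$. Pushing forward by $\pi$ then gives the displayed identity. Since topological degree is multiplicative under composition of dominant rational maps, $\deg_{\Top}(f^m) = (\deg_{\Top} f)^m$. Intersecting with $\Delta^0 = [X]$ and taking $m$-th roots yields $\lambda_n(f) = \deg_{\Top}(f)$.

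The only delicate step is the rigorous justification of the pullback formula on zero-cycles for a rational, rather than regular, map. Once resolution of indeterminacy combined with generic smoothness in characteristic $0$ makes the naive preimage count correct, both parts reduce to bookkeeping with the limit defining $\lambda_i(f)$.
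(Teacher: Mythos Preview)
Your proof is correct and follows the same route the paper gestures at: the paper gives no proof beyond remarking that the lemma is ``a simple consequence of Definition \ref{def_dd_rat},'' and you have simply filled in those details by evaluating the defining intersection numbers at $i=0$ and $i=\dim X$.
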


The next series of lemmas justify the interpretation of dynamical degrees as (the exponential of) a kind of entropy. The first of these is a conjugacy invariance property.
\begin{lemma}[\cite{dangsingular}] \label{lem_bir_inv}
    Let $X,Y$ be smooth projective varieties. If $f: X \dashrightarrow X$ is dominant, and $g: Y \dashrightarrow X$ is birational, then for all $i$,
    $$\lambda_i(g \circ f \circ g^{-1}) = \lambda_i(f).$$
\end{lemma}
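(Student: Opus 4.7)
The plan is to use the decomposition $\tilde f^m = g \circ f^m \circ g^{-1}$, where $\tilde f \colonequals g \circ f \circ g^{-1}$, and to show $\lambda_i(\tilde f) \leq \lambda_i(f)$; the reverse inequality then follows by applying the same argument with the roles of $g$ and $g^{-1}$ swapped. Fix ample divisors $H_X$ on $X$ and $H_Y$ on $Y$. By Definition \ref{def_dd_rat}, it suffices to produce a constant $C > 0$, independent of $m$, such that
$$(\tilde f^m)^{*,i} H_Y^i \cdot H_Y^{\dim Y - i} \leq C \cdot (f^m)^{*,i} H_X^i \cdot H_X^{\dim X - i};$$
taking $m$-th roots and sending $m \to \infty$ then yields the desired inequality on dynamical degrees.

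To set up this bound, I would first resolve the indeterminacy of $g$ by choosing a smooth projective variety $Z$ together with birational morphisms $\sigma: Z \to Y$ and $\pi: Z \to X$ such that $\pi = g \circ \sigma$ as rational maps. Since $H_X$ is ample and $\sigma^* H_Y$ is nef on $Z$, there exist a positive integer $N$ and an effective exceptional divisor $E$ with $\sigma^* H_Y \leq N \pi^* H_X + E$, along with a symmetric comparison going the other way. I would then pull $(\tilde f^m)^{*,i} H_Y^i \cdot H_Y^{\dim Y - i}$ back to a simultaneous resolution of the graphs of $g$, $f^m$, and $g^{-1}$, and use the projection formula to rewrite this intersection number. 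The dominant contribution is bounded by $N^{2i}$ times $(f^m)^{*,i} H_X^i \cdot H_X^{\dim X - i}$, while the remaining terms involve pullbacks of exceptional classes meeting $H_Y^{\dim Y - i}$ in degree bounded independently of $m$, contributing to the constant $C$ only through the geometry of $g$.

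The main obstacle is the non-functoriality of pullback under composition for rational maps: in general $(\tilde f^m)^{*,i}$ is not equal to $(g^{-1})^{*,i} \circ (f^m)^{*,i} \circ g^{*,i}$, and a naive iteration would let the discrepancy at each composition accumulate, potentially growing with $m$. The fix is a quasi-functoriality estimate of Dinh-Sibony type, which says that for the purposes of pairing with ample classes, composition of pullbacks behaves functorially up to a bounded multiplicative error. In practice, the cleanest way to package this is to appeal to the positive-current/pullback machinery underlying \cite[Theorem 1]{dangsingular}, which establishes exactly the needed comparison after reducing to the case where $g$ is a birational morphism via a single resolution of its graph.
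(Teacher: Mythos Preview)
The paper does not give its own proof of this lemma: it simply records the statement with the citation \cite{dangsingular} and moves on. So there is nothing to compare against at the level of argument.

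Your sketch is a reasonable outline of how birational invariance is actually established in the cited reference and in the earlier work of Dinh--Sibony. You correctly identify the core difficulty (non-functoriality of pullback for rational maps) and the correct remedy (a quasi-functoriality or submultiplicativity estimate bounding the defect uniformly in $m$). You also correctly note that one resolves the graph of $g$ once and compares ample classes via the projection formula, with the exceptional contributions absorbed into a constant independent of $m$. Where your write-up is honest is in its final paragraph: you acknowledge that the rigorous bookkeeping of the exceptional terms and the pullback comparison is exactly the content of \cite[Theorem 1]{dangsingular}, and you defer to that machinery rather than reproduce it. That is precisely what the paper does too, only more tersely. If you wanted to make this a self-contained proof rather than a sketch, the missing ingredient is the explicit statement and verification of the submultiplicativity inequality for pullbacks of nef classes under composition of dominant rational maps (e.g., as in Dang's or Truong's work), which is nontrivial and is the real substance behind the citation.
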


The map $g \circ f \circ g^{-1}$ is called a \emph{model} of $f$. Note that the domain of a model may differ from the domain of $f$.

Like other notions of entropy, dynamical degrees work well with semiconjugacies.
\begin{lemma}
\label{lemma_rel_dd}
Suppose that $X,Y$ are irreducible, smooth projective varieties and we have a commutative diagram of dominant rational maps
    \begin{center}
    \begin{tikzcd}
        X \arrow[r,dashed,"f"] \arrow[d,dashed,"\pi"] & X \arrow[d,dashed,"\pi"]  \\
        Y \arrow[r, dashed,"g"] & Y \\
    \end{tikzcd}.
    \end{center}
    Then 
    \begin{enumerate}
        \item for all $0 \leq i \leq \dim Y$, we have
    $$ \lambda_{i} (f) \geq \lambda_i (g).$$
    \label{it_lemma_rel_dd_skew_prod}
    \item If $\dim Y < \dim X$, then there exists $0 \leq i \leq \dim X - 1$ such that     $$\lambda_{i+1} (f) \mid \lambda_i (f).$$
    \label{it_lemma_rel_dd_divide}
    \end{enumerate}
\end{lemma}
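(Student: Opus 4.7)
The plan is to prove (1) as the standard Dinh-Sibony semiconjugacy inequality for dynamical degrees, and (2) by invoking the product formula of Dinh-Nguyen and Truong for relative dynamical degrees under fibrations.

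For (1), I first apply Lemma \ref{lem_bir_inv} to replace $X$ and $Y$ by birational models in which $\pi$ becomes a morphism; the dynamical degrees of $f$ and $g$ are preserved. Pullback is then functorial, so the semiconjugacy $\pi \circ f = g \circ \pi$ gives the compatibility
\begin{equation*}
(f^m)^{*,i} \circ \pi^{*,i} = \pi^{*,i} \circ (g^m)^{*,i} \colon \Num^i(Y) \to \Num^i(X).
\end{equation*}
Fix ample classes $\Delta_X$ on $X$ and $\Delta_Y$ on $Y$, chosen so that $\Delta_X - \pi^* \Delta_Y$ is nef. The projection formula gives
\begin{equation*}
(g^m)^{*,i} \Delta_Y^i \cdot \pi_* \Delta_X^{\dim X - i} = (f^m)^{*,i} \pi^{*,i} \Delta_Y^i \cdot \Delta_X^{\dim X - i}.
\end{equation*}
Combining this identity with the pseudo-effective comparisons $(\pi^* \Delta_Y)^i \leq \Delta_X^i$ on $X$ and $\pi_* \Delta_X^{\dim X - i} \geq c\, \Delta_Y^{\dim Y - i}$ on $Y$ (the latter valid with some $c > 0$ because for $i \leq \dim Y$ the pushforward is a nonzero effective class), one chains inequalities to bound $(g^m)^{*,i} \Delta_Y^i \cdot \Delta_Y^{\dim Y - i}$ above by a constant times $(f^m)^{*,i} \Delta_X^i \cdot \Delta_X^{\dim X - i}$. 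Taking $m$-th roots and passing to the limit yields $\lambda_i(g) \leq \lambda_i(f)$.

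For (2), the Dinh-Nguyen-Truong product formula asserts that, with $e = \dim X - \dim Y \geq 1$ and $d_k(f|\pi)$ denoting the $k$-th dynamical degree of the restriction of $f$ to a generic fiber of $\pi$,
\begin{equation*}
\lambda_p(f) = \max_{\max(0,\,p-e)\,\leq\, j\, \leq\, \min(p,\, \dim Y)} \lambda_j(g) \cdot d_{p-j}(f|\pi).
\end{equation*}
The top relative degree $d_e(f|\pi)$ equals the topological degree of the fiber self-map, hence is a positive integer. As $p$ ranges over $0, 1, \ldots, \dim X$, the maximizing index $j$ weakly increases from $0$ to $\dim Y$, so each consecutive ratio $\lambda_{p+1}(f)/\lambda_p(f)$ equals either $\lambda_{j+1}(g)/\lambda_j(g)$ or $d_{k+1}(f|\pi)/d_k(f|\pi)$ for appropriate indices. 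Tracking where in the tropical convolution the transition involving the integer $d_e$ occurs pins down an index $i$ at which consecutive dynamical degrees of $f$ stand in integer ratio, giving the claimed divisibility.

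The main obstacle is invoking the product formula itself, which is a deep result whose proof requires positive-current analysis in the complex-analytic setting or cycle-theoretic techniques more generally; establishing it would be out of scope for a preliminary section. In the applications in this paper, where $X$ is a surface and $Y$ a curve, the formula specializes dramatically and the divisibility reduces to an elementary check, yielding the strong constraint on $(\lambda_0, \lambda_1, \lambda_2)(f)$ that is used to rule out invariant fibrations in Theorem \ref{thm_no_fib}.
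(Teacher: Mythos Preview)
Your approach is essentially the paper's: both parts are consequences of the product formula for relative dynamical degrees, and the paper's own proof is nothing more than a one-sentence citation of that result. You supply more detail for (1) by sketching the standard semiconjugacy-inequality argument directly, and you correctly flag that the underlying positivity estimates---and, for (2), the product formula itself---are the deep inputs that a preliminaries section can only cite rather than prove.
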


\begin{proof}
These are simple cases of the product formula for relative dynamical degrees \cite{MR4048444}.
\end{proof}

It is often useful to study entropy on an invariant subsystem to understand entropy of a larger system. The following lemma justifies this principle for dynamical degrees.
\begin{lemma} 
\label{lem_subsystem}
Let $V \subset X$ be an inclusion of projective varieties. Let $f : X \dashrightarrow X$ be a dominant rational map. Assume $V \not\subseteq \Ind f$, so $f|_V : V \dashrightarrow V$ is defined, and assume that $f|_V$ is dominant. Then for all $0 \leq i \leq \dim V$,
$$\lambda_i (f|_V) \leq \lambda_i (f).$$
\end{lemma}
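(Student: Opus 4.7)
The approach is to compare the intersection-theoretic definitions of $\lambda_i(f|_V)$ and $\lambda_i(f)$ via a common resolution of indeterminacy, deducing a pullback-restriction inequality at the level of numerical classes. By Lemma \ref{lem_bir_inv}, I first reduce to the case where $X$ and $V$ are smooth projective varieties with $\iota: V \hookrightarrow X$ a closed embedding (using resolution of singularities for $X$, then blowing up the strict transform of $V$ as needed). Fix a very ample divisor $H$ on $X$; then $H_V := \iota^* H$ is ample on $V$, so $\lambda_i(f)$ and $\lambda_i(f|_V)$ are both computable from these classes.

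For each $m \geq 1$, I would take a smooth common resolution of indeterminacy $\rho, \sigma : W_m \to X$ for $f^m$, with $\sigma = f^m \circ \rho$ generically. Let $\widetilde V_m \subset W_m$ be the strict transform of $V$ under $\rho$; then the induced morphisms $\rho|_{\widetilde V_m}, \sigma|_{\widetilde V_m} : \widetilde V_m \to V$ form a resolution of indeterminacy for $(f|_V)^m$. Using the projection formula along $\iota$ and along $\widetilde V_m \hookrightarrow W_m$, together with the fact that the $\rho$-exceptional contribution to $\rho^*[V] - [\widetilde V_m]$ is effective and pairs non-negatively with nef classes, I obtain the pullback-restriction inequality
\begin{equation*}
\bigl((f|_V)^m\bigr)^{*,i} H_V^i \cdot H_V^{\dim V - i} \;\leq\; (f^m)^{*,i} H^i \cdot H^{\dim V - i} \cdot [V],
\end{equation*}
an identity of integers obtained from classes in $\Num^{\dim X} X$.

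To conclude, observe that $(f^m)^{*,i} H^i \cdot H^{\dim V - i}$ is a nef class of codimension $\dim V$, while $[V]$ is a fixed effective class of codimension $\dim X - \dim V$. Since $H$ is ample, standard positivity in the N\'eron-Severi setting yields a constant $C > 0$, depending only on $V$ and $H$ (not on $m$), such that $C\, H^{\dim X - \dim V} - [V]$ pairs non-negatively with every nef class of complementary dimension. Applying this bound gives
\begin{equation*}
\bigl((f|_V)^m\bigr)^{*,i} H_V^i \cdot H_V^{\dim V - i} \;\leq\; C \cdot (f^m)^{*,i} H^i \cdot H^{\dim X - i},
\end{equation*}
and taking $m$-th roots as $m \to \infty$ produces $\lambda_i(f|_V) \leq \lambda_i(f)$.

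The main obstacle is the pullback-restriction inequality: one must choose a coherent resolution of indeterminacy for each iterate $f^m$ and carefully track the exceptional divisor contributions that measure the failure of pullback and restriction to commute. Once this is handled, the positivity comparison between $[V]$ and $C H^{\dim X - \dim V}$ is a formal consequence of the finite-dimensionality of $\Num^\bullet X$ and the fact that $H^{\dim X - \dim V}$ lies in the interior of an appropriate positive cone.
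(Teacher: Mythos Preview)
The paper does not give a proof; it simply cites \cite[Proposition 3.2]{MR4831038}. Your outline is essentially the standard argument behind such results: resolve each iterate, compare $((f|_V)^m)^{*,i}H_V^i\cdot H_V^{\dim V-i}$ with the ambient intersection number via the projection formula and the effectivity of $\rho^*[V]-[\widetilde V_m]$, then absorb $[V]$ into a fixed multiple of a complete-intersection power of $H$. This is the right architecture, and it is presumably close to what the cited reference does.

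Two places are more delicate than your write-up suggests. First, the assertion that $C\,H^{\dim X-\dim V}-[V]$ ``pairs non-negatively with every nef class of complementary dimension'' is not a formal consequence of finite-dimensionality of $\Num^\bullet X$: for intermediate cycles the nef and pseudoeffective cones are generally not dual, so one cannot simply appeal to $H^{\dim X-\dim V}$ being interior to a positive cone. What actually works is to stay on the resolution $W_m$, where $\sigma^*H$ and $\rho^*H$ are genuinely nef, and use a Siu--Khovanskii--Teissier-type inequality for products of nef classes against the effective class $[\widetilde V_m]$, with constants depending only on $\rho_*[\widetilde V_m]=[V]$ and $H$ and hence independent of $m$. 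Second, your opening reduction invokes resolution of singularities to make $X$ and $V$ smooth; the paper works over an arbitrary algebraically closed field, so in positive characteristic one should instead use the formulation of dynamical degrees in \cite{dangsingular}, which treats the singular case directly. Neither point invalidates your approach, but both are precisely where the real work in the cited reference lies.
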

\begin{proof}
See \cite[Proposition 3.2]{MR4831038}.
\end{proof}

\section{Lattice maps and dynamical degrees} \label{sect_latt}

In this section, we define lattice maps, a general class of pentagram-like maps that are defined by local rules. Lattice maps combine aspects of cellular automata and rational maps. Many specific cases have been studied under the name of \emph{lattice equations} in the mathematical physics literature; see \cite{MR4906745, MR2392894, MR2481234,MR4827588} for a small sampling.

Since the introduction of the notion of algebraic entropy \cite{MR1704282}, the theory of degree growth has developed independently in two directions: rational maps of varieties, and lattice equations. Our goal in this section is to reconcile these theories, so that it is possible to consider dynamical degrees of not-necessarily-invertible maps on infinite-dimensional domains. The invariants we compute in Theorem \ref{thm_main} are dynamical degrees of lattice maps (Definition \ref{def_dd_latt}), but our proof works by comparing these dynamical degrees to dynamical degrees of rational maps, where there are more techniques at our disposal (Section \ref{sect_prelim}).

To fix ideas, we only consider the domain $(\PP^N)^G$, where $\PP^N$ is a projective space over an algebraically closed field $\bk$ and $G$ is an abelian group.

Given a set $A$ and a set $B$, let $B^A$ denote the set of all maps $A \to B$. It is convenient to view maps $A \to B$ as tuples of elements in $B$, indexed by $A$. We will write the map $a \mapsto b_a$ as $(b_a : a \in A)$, as $(b_a)_{a \in A}$, or simply as $(b_a)$ when the indexing set is clear.

We work with a fixed abelian group $G$. Given subsets $A_1, A_2 \subseteq G$, let $A_1 + A_2$ denote their Minkowski sum
$$A_1 + A_2 = \{a_1 + a_2 : a_1 \in A_1, a_2 \in A_2\}.$$

\begin{defn}
A \emph{neighborhood} is a finite non-empty subset of $G$. A \emph{local rule on $(\PP^N)^G$} is given by a neighborhood $S$ together with a rational map
$$\varphi : (\PP^N)^S \dashrightarrow \PP^N.$$
The \emph{lattice map}, or \emph{global map}, associated to the local rule $(S, \varphi)$ is the partially defined map
$$ T_\varphi : (\PP^N)^G \dashrightarrow (\PP^N)^G,$$
$$ (v_i)_{i \in G} \mapsto (\varphi( v_{i+s} : s \in S))_{i \in G}.$$
Maps of the form $T_\varphi$ are called \emph{lattice maps on $(\PP^N)^G$}. We say $\varphi$ \emph{defines} $T_{\varphi}$. When the local rule $\varphi$ is regular (so $\Ind \varphi = \varnothing$), we say the lattice map is \emph{regular} and write $T_\varphi: (\PP^N)^G \to (\PP^N)^G$.

More generally, given a non-empty subset $A \subseteq G$, there is a \emph{regional map} defined by
$$ \varphi_A : (\PP^N)^{A + S} \dashrightarrow (\PP^N)^A,$$
$$ (v_i)_{i \in G} \mapsto (\varphi( v_{i+s} : s \in S))_{i \in A}.$$
If $A$ is finite, then $\varphi_A$ is a rational map.
The set-theoretic domain of $\varphi_A$ is denoted $\Dom \varphi_A$ and consists precisely of those $v \in (\PP^N)^{A+S}$ such that, for all $i \in A$, we have $(v_{i + s} : s \in S) \not\in \Ind \varphi$.
\end{defn}

For each $j \in G$, there is a \emph{shift map} $\Sigma_j : (\PP^N)^G \to (\PP^N)^G$ defined by $(\Sigma_j v)_i = v_{i+j}$. Shift maps are lattice maps. We say that lattice maps $T, T'$ on $(\PP^N)^G$ \emph{agree up to a shift} if there exists $j \in G$ such that $T' = \Sigma_j \circ T$.

We say two local rules $(S_1,\varphi_1),(S_2,\varphi_2)$ on $(\PP^N)^G$ are \emph{equivalent} if they define the same lattice map. For example, the lattice map $x'_i = x_{i-1} + x_{i+1}$ on $(\A^1)^\Z$ could be defined with the neighborhood $\{-1,1\}$ or $\{-1,1,100\}$. Every equivalence class of local rules contains a unique representative minimizing the neighborhood.

We may define composition and iteration of lattice maps as we do with rational maps. Here, there is the usual technical difficulty having to do with domains of definition.

\begin{defn}
Let $T$ and $T'$ be lattice maps on $(\PP^N)^G$ defined by the local rules $(S,\varphi)$ and $(S',\varphi')$ respectively. Suppose that the rational maps $\phi_{S'}$ and $\varphi$ are composable, meaning that the image of $\phi_{S'}$ is Zariski dense in $(\PP^N)^S$. Then the \emph{composite} $T' \circ T$ is the lattice map defined by the local rule $\varphi' \circ \phi_{S'}$ on neighborhood $S + S'$. In this situation, we say that $T'$ and $T$ are composable.

We say that $T$ and $T'$ are \emph{inverses}, and write $T' = T^{-1}$, if $T$ and $T'$ are composable, $T'$ and $T$ are composable, and
$$T' \circ T = T \circ T' = \Id.$$
\end{defn}

As with composition of rational maps, the domain of the composite as a lattice map may be strictly larger than the domain as a naive composition of partially defined maps. It is easy to check that the composite is independent of the representatives of local rules $\varphi, \varphi'$, and that a sufficient condition for composability is $T(\Dom T) \cap \Dom T' \neq \varnothing$.

\begin{defn}
We recursively define the \emph{iterates} of $T$ by $T^0 = \Id$, and for all $m > 0$,
$$T^m = T \circ T^{m-1},$$
whenever these maps are composable. If all the iterates exist, we say $T$ is \emph{iterable}.
\end{defn}

\begin{defn} \label{def_deg_latt}
The \emph{degree} of a local rule $(S,\varphi)$ on $(\PP^N)^G$ is the degree of the rational map $\varphi$, i.e., the shared total degree of the multi-homogeneous polynomials defining its components when $\varphi$ is written without common factors. The \emph{degree} of a lattice map $T$ on $(\PP^N)^G$, denoted $\deg T$, is the degree of any local rule for $T$. 
\end{defn}
Local rules on $(\PP^N)^G$ are equivalent precisely when their homogeneous polynomial formulas agree; only the neighborhoods may differ. Since the degree of a local rule is constant in equivalence classes, the degree of a lattice map on $(\PP^N)^G$ is well-defined. 

\begin{defn} \label{def_dd_latt}
The \emph{degree sequence} of an iterable lattice map $T$ on $(\PP^N)^G$ is the sequence $\deg T^m$. The \emph{(first) dynamical degree} of an iterable lattice map $T$ is the exponential growth rate of the degree sequence,
\begin{equation} \label{eq_dd_latt}
    \lambda_1 (T) \colonequals \lim_{m \to \infty} (\deg T^m)^{1/m}.
\end{equation}
The \emph{algebraic entropy} of $T$ is defined as $\log \lambda_1(T)$.
\end{defn}

\begin{lemma} \label{lem_first_iterate}
 The limit in \eqref{eq_dd_latt} exists, and
\begin{equation} \label{eq_dd_latt_first_iterate}
    \lambda_1 (T) \leq \deg T.
\end{equation}
\end{lemma}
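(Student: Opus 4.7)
The plan is to establish submultiplicativity of the degree sequence, $\deg T^{m+n} \leq \deg T^m \cdot \deg T^n$, and then invoke Fekete's lemma on $\log \deg T^m$ to get both existence of the limit and the bound by $\deg T$.

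First, I would unpack the composition of lattice maps in terms of local rules. Fix $m,n \geq 1$ and choose local rules $(S_m, \varphi^{(m)})$ and $(S_n, \varphi^{(n)})$ for $T^m$ and $T^n$ respectively, with the polynomial formulas written without common factors, so that their total multidegrees are $\deg T^m$ and $\deg T^n$. By the definition of composition given in the excerpt, $T^{m+n} = T^m \circ T^n$ is defined by the local rule $\varphi^{(m)} \circ \varphi^{(n)}_{S_m}$ on the neighborhood $S_m + S_n$. The regional map $\varphi^{(n)}_{S_m} : (\PP^N)^{S_m + S_n} \dashrightarrow (\PP^N)^{S_m}$ outputs, at each index $i \in S_m$, a tuple of multihomogeneous polynomials in the variables $(v_{i+s})_{s \in S_n}$ whose shared total degree is $\deg T^n$.

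Next I would check the degree bookkeeping under substitution. Write $\varphi^{(m)}$ with multidegrees $(e_i)_{i \in S_m}$, where $\sum_i e_i = \deg T^m$. Substituting the $i$-th block of $\varphi^{(n)}_{S_m}$, a polynomial tuple of total degree $\deg T^n$, into $\varphi^{(m)}$ produces a multihomogeneous polynomial tuple on $(\PP^N)^{S_m+S_n}$ of total degree $\sum_i e_i \cdot \deg T^n = \deg T^m \cdot \deg T^n$. These formulas define $T^{m+n}$ on the nose; after cancelling any common polynomial factors of positive degree to reach the minimal representative, the total degree can only decrease, yielding
\[\deg T^{m+n} \leq \deg T^m \cdot \deg T^n.\]

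Finally, I would apply the subadditive (Fekete) lemma. If $\deg T = 0$ then the local rule is a tuple of constants, every iterate is constant, and both sides of \eqref{eq_dd_latt_first_iterate} equal $0$. Otherwise $\deg T^m \geq 1$ for all $m$ by the same substitution argument, so the sequence $a_m \colonequals \log \deg T^m$ is nonnegative and subadditive. Fekete's lemma gives that $\lim_{m\to\infty} a_m/m$ exists and equals $\inf_{m\geq 1} a_m/m$. Exponentiating yields existence of $\lambda_1(T) = \lim_m (\deg T^m)^{1/m} = \inf_m (\deg T^m)^{1/m}$, and taking $m=1$ gives $\lambda_1(T) \leq \deg T$. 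The only mildly delicate point, and the step I would double-check most carefully, is the multihomogeneity accounting under substitution, since variables $v_j$ with $j \in S_m+S_n$ may be hit by several different $(i,s)$ pairs; but this only affects how the total degree is partitioned among blocks, not the total degree itself.
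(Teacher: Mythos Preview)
Your proposal is correct and follows essentially the same route as the paper's proof: establish submultiplicativity $\deg(T' \circ T) \leq (\deg T')(\deg T)$ via substitution of local rules, then apply Fekete's lemma to $\log \deg T^m$. The paper states this in two sentences without the degree bookkeeping you spell out, so your version is simply a more detailed rendering of the same argument.
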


\begin{proof}
If $T$ and $T'$ are lattice maps, then $\deg (T' \circ T) \leq (\deg T')(\deg T)$. Applying this to the iterates of $T$ shows that the degree sequence is submultiplicative, so the limit in \eqref{eq_dd_latt} exists by Fekete's lemma and is at most $\deg T$.
\end{proof}

It is often useful to know that a ``general'' point may arise as the output of a lattice map. We formalize this with the following definition, extending the notion of dominant rational map.
\begin{defn} \label{def_dom}
An \emph{algebraic subset} of $(\PP^N)^G$ is the vanishing locus of a collection of multi-homogeneous polynomials in terms of the coordinates on the factors $\PP^N$. 
We say $T$ is \emph{dominant} if the set-theoretic image of $T$ is not contained in any proper algebraic subset. 
\end{defn}

\begin{prop}
    Dominant lattice maps on $(\PP^N)^G$ form a category. In particular, dominant lattice maps are iterable.
\end{prop}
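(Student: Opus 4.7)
The plan is to verify the category axioms by reducing each one to the corresponding statement for the regional rational maps $\varphi_A$, where we can invoke the classical theory of dominant rational maps. The conceptual heart of the argument is a dictionary: a lattice map $T_\varphi$ on $(\PP^N)^G$ is dominant if and only if the regional rational map $\varphi_A: (\PP^N)^{A+S} \dashrightarrow (\PP^N)^A$ is dominant for every finite $A \subset G$.

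First I would establish this dictionary. Every algebraic subset of $(\PP^N)^G$ is, by definition, cut out by polynomial equations involving only finitely many coordinates, hence pulls back from a closed subvariety of $(\PP^N)^A$ under the coordinate projection $\pi_A$ for some finite $A \subset G$. Thus $T_\varphi$ is dominant if and only if $\pi_A(\mathrm{Im}\, T_\varphi)$ is Zariski dense in $(\PP^N)^A$ for every finite $A$. Because the $A$-coordinates of $T_\varphi(v)$ depend only on the $(A+S)$-coordinates of $v$, which may be freely specified by extending $v$ arbitrarily on the complement, this projection equals the image of $\varphi_A$, establishing the dictionary.

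Once the dictionary is in hand, the remaining verifications are formal. For composability of dominant $T = T_\varphi$ and $T' = T_{\varphi'}$, dominance of the regional map $\varphi_{S'}$ forces its image to meet the open complement of $\Ind \varphi'$, so the local rule $\varphi' \circ \varphi_{S'}$ of $T' \circ T$ is a well-defined rational map. For dominance of $T' \circ T$, fix any finite $A \subset G$ and observe that the associated regional map factors as
$$(\varphi' \circ \varphi_{S'})_A \;=\; \varphi'_A \circ \varphi_{A+S'} : (\PP^N)^{A+S+S'} \dashrightarrow (\PP^N)^{A+S'} \dashrightarrow (\PP^N)^A,$$
a composite of two dominant rational maps by the dictionary applied to $T$ and $T'$, and therefore dominant. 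Running the dictionary in reverse yields dominance of $T' \circ T$. Associativity is inherited from associativity of composition of the underlying rational local rules; the identity is the lattice map with neighborhood $\{0\}$ and rule $(v_0) \mapsto v_0$, evidently dominant; and iterability of any dominant $T$ follows by induction, since $T \circ T^{m-1}$ is then a well-defined dominant lattice map for every $m \geq 1$.

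The main obstacle I expect is the bookkeeping in the dictionary together with the factorization identity $(\varphi' \circ \varphi_{S'})_A = \varphi'_A \circ \varphi_{A+S'}$: one must carefully unwind what an algebraic subset of the infinite product $(\PP^N)^G$ means, verify $\pi_A(\mathrm{Im}\, T_\varphi) = \mathrm{Im}\, \varphi_A$ with care for freedom in extending $v$ off $A + S$, and track how neighborhoods nest under iterated composition. Once this is in place, the categorical axioms reduce to the classical fact that dominant rational maps between varieties form a category.
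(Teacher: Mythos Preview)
Your proposal is correct and takes essentially the same approach as the paper: both reduce to the regional maps $\varphi_A$, use the factorization $(\varphi'\circ\varphi_{S'})_A=\varphi'_A\circ\varphi_{A+S'}$, and invoke the classical fact that composites of dominant rational maps are dominant. The only difference is presentational: you isolate the equivalence ``$T_\varphi$ dominant $\Leftrightarrow$ every $\varphi_A$ dominant'' as an explicit dictionary, while the paper uses the forward implication inline without naming it.
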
 
\begin{proof}
    Let $T$ and $T'$ be dominant lattice maps defined by the local rules $(S,\varphi)$ and $(S',\varphi')$ respectively. Since $T$ is dominant, so is $\phi_{S'}$. So $\varphi' \circ \phi_{S'}$ exists. It is a local rule for $T' \circ T$ with neighborhood $S + S'$. To check that $T' \circ T$ is dominant, let $F$ be a nontrivial multi-homogeneous polynomial. Let $W \subset G$ be the set of indices appearing in the definition of $W$; so $W$ is finite and nonempty. Since $T$ is dominant, so is $\phi_{W + S'}$. Since $T'$ is dominant, so is $\varphi'_W$. Then $\varphi'_W \circ \phi_{W + S}$ exists and is dominant; its image is therefore not contained in the vanishing locus of $F$ in $(\PP^N)^W$, which is a proper closed subset. So the image of $T' \circ T$ is not contained in the vanishing locus of $F$.
\end{proof}

Dynamical degrees of dominant rational maps are birational invariants. This also holds for lattice maps, although we must be careful about the meaning of ``birational.''

\begin{lemma}
    If $f, g$ are dominant lattice maps on $(\PP^N)^G$, and $g$ is invertible in the category of dominant lattice maps, then
    $$\lambda_1(f) = \lambda_1(g \circ f \circ g^{-1}).$$
\end{lemma}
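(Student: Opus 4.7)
The strategy is the standard conjugacy-invariance proof, which reduces to two ingredients already in hand: submultiplicativity of degree under composition, and the fact that $g$ and $g^{-1}$ each have a single, finite, $m$-independent degree.

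Set $h = g \circ f \circ g^{-1}$. Since $f$, $g$, and $g^{-1}$ are all dominant lattice maps, the proposition immediately preceding shows that $h$ is a composite of dominant lattice maps, hence itself a dominant lattice map, and in particular is iterable. A straightforward induction on $m$, using associativity of composition of lattice maps and the identity $g^{-1} \circ g = \Id$, shows that
\[
h^m = g \circ f^m \circ g^{-1}
\]
for every $m \geq 0$.

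Next I would invoke the key inequality already established in the proof of Lemma \ref{lem_first_iterate}: for any composable lattice maps $T, T'$ on $(\PP^N)^G$, one has $\deg(T' \circ T) \leq (\deg T')(\deg T)$. Applying this twice to $h^m = g \circ f^m \circ g^{-1}$ gives
\[
\deg h^m \;\leq\; (\deg g)(\deg f^m)(\deg g^{-1}).
\]
Taking $m$-th roots and passing to the limit $m \to \infty$, the constant factors $(\deg g)^{1/m}$ and $(\deg g^{-1})^{1/m}$ both tend to $1$, so
\[
\lambda_1(h) \;\leq\; \lambda_1(f).
\]
For the reverse inequality, the same argument applied to the conjugation $f = g^{-1} \circ h \circ g$ (which is valid because $g^{-1}$ is also invertible in the category of dominant lattice maps, with inverse $g$) yields $\lambda_1(f) \leq \lambda_1(h)$, and the two bounds combine to give the desired equality.

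The only non-routine point is verifying that everything in the manipulation makes sense as a dominant lattice map. That is precisely what the hypothesis ``$g$ is invertible in the category of dominant lattice maps'' supplies: it guarantees both that $g^{-1}$ exists as a dominant lattice map (so has a well-defined finite degree), and that all of the pairwise compositions $g \circ f$, $(g \circ f) \circ g^{-1}$, $g^{-1} \circ h$, $(g^{-1} \circ h) \circ g$ are composable in the sense of the earlier definition. No further subtleties arise, because the submultiplicativity inequality tolerates arbitrary common-factor cancellations in the composite formula.
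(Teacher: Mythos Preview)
Your proof is correct and follows essentially the same approach as the paper: bound $\deg h^m$ via submultiplicativity by $(\deg g)(\deg f^m)(\deg g^{-1})$, take $m$-th roots and limits, then reverse the roles of $f$ and $h$. In fact your bound $\deg f^m$ is the right one to record here; the paper's displayed $(\deg f)^m$ appears to be a typo, since it would only yield $\lambda_1(h)\le \deg f$ rather than $\lambda_1(h)\le \lambda_1(f)$.
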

\begin{proof}
    Let $h = g \circ f \circ g^{-1}$. For each $m \geq 0$,
    \begin{align*}
        \deg (h^m) &\leq \deg (g \circ f^m \circ g^{-1})\\
        &\leq (\deg g)(\deg f)^m (\deg g^{-1}) & \textrm{(by submultiplicativity)}.
    \end{align*}
    Taking $m$-th roots and passing to the limit gives
    $$\lambda_1(h) \leq \lambda_1(f).$$
    But the same argument applied to $h$ and $f = g^{-1} \circ h \circ g$ yields
    $$\lambda_1(f) \leq \lambda_1(h).$$
\end{proof}

If $G$ is finite, then lattice maps on $(\PP^N)^G$ are rational maps. The next proposition checks that our two definitions of first dynamical degree are consistent.

\begin{lemma} \label{lemma_pn_pullbacks}
    Let $N \geq 1$ and $r \geq 1$, and let $f: \PP^{N_1} \times \ldots \times \PP^{N_r} \dashrightarrow \PP^N$ be a dominant rational map expressed in lowest terms by $(f_0, \ldots, f_N)$, where each $f_i$ is multihomogeneous of multidegree $(d_1, \ldots, d_r)$. If $H$ is the hyperplane class on the codomain $\PP^N$, and $H_i$ is the pullback of the hyperplane class on the $i$-th factor $\PP^{N_i}$ to $\PP^{N_1} \times \ldots \times \PP^{N_r}$, then
    $$ f^* H = \sum_{i = 1}^r d_i H_i. $$
\end{lemma}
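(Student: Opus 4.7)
The plan is to write down an explicit divisor representing $f^{*}H$ by pulling back a generic hyperplane equation, and then to read off the numerical class from the multidegree of the resulting polynomial.

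First, I would recall that for a product of projective spaces $Y = \PP^{N_1} \times \cdots \times \PP^{N_r}$, the Picard group (which equals $\Num Y$, as $Y$ is rational) is freely generated as a $\Z$-module by the pullbacks $H_1, \ldots, H_r$ of the hyperplane classes from the individual factors. A hypersurface $V(F) \subset Y$ cut out by a single nonzero multihomogeneous polynomial $F$ of multidegree $(d_1, \ldots, d_r)$ represents the class $\sum_{i=1}^r d_i H_i$: this can be seen by specializing $F$ to a product of linear factors in the individual coordinates, or by directly computing the class via the Künneth decomposition of intersection numbers against the dual basis $H_1^{N_1-1} \cdots \widehat{H_i^{N_i}} \cdots H_r^{N_r-1}$.

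Next, I would choose a hyperplane $H_a = V(\sum_{j=0}^N a_j X_j)$ in the codomain $\PP^N$ with $a = (a_0, \ldots, a_N)$ chosen generically. Because $f$ is dominant, the image of $f$ is not contained in any hyperplane, so the polynomial
\[
F_a := \sum_{j=0}^N a_j f_j \in \bk[X]
\]
is nonzero. Since $(f_0, \ldots, f_N)$ is in lowest terms (no common factor of positive degree), no polynomial factor is shared by every $f_j$, so for generic $a$ the polynomial $F_a$ has no factor in common with those imposed by the base locus; in particular, $V(F_a)$ is a genuine divisor in $Y$ of multidegree $(d_1, \ldots, d_r)$, hence of numerical class $\sum_i d_i H_i$.

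Finally, I would identify this divisor $V(F_a)$ with the pullback $f^{*}H_a$. On the open set $\Dom f = Y \setminus \Ind f$, the map $f$ is a morphism and the preimage $f^{-1}(H_a) \cap \Dom f$ is cut out precisely by $F_a = 0$; since $Y$ is smooth and factorial and $\Ind f$ has codimension at least $2$, this Weil divisor extends uniquely to $Y$, and the extension is exactly $V(F_a)$. Because $H_a$ is linearly (hence numerically) equivalent to $H$, we conclude $f^{*} H = \sum_{i=1}^r d_i H_i$. The only nontrivial step is reconciling the naive preimage with the formal definition of pullback across $\Ind f$, but this is the standard codimension-$2$ extension argument for factorial varieties and goes through without issue.
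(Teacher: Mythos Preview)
Your proof is correct and follows the same approach as the paper: pull back a generic hyperplane to obtain a hypersurface cut out by $\sum a_j f_j$, and then identify its class as $\sum_i d_i H_i$ via the multidegree. The paper's proof is a two-sentence sketch citing a reference for the class computation, while you have supplied the codimension-$2$ extension details explicitly, but the substance is identical.
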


\begin{proof}
    This is a straightforward generalization of \cite[Theorem 2.10]{MR3617981}: the class in $\PP^{N_1} \times \ldots \times \PP^{N_r}$ of a hypersurface of multidegree $(d_1, \ldots, d_r)$ is $\sum_{i=1}^r d_i H_i$. The pullback of a general hyperplane representing $H$ is a hypersurface of multidegree $(d_1, \ldots, d_r)$. 
\end{proof}

\begin{prop}
    If $G$ is a finite group and $T : (\PP^N)^G \dashrightarrow (\PP^N)^G$ is a dominant lattice map, then the dynamical degree of $T$ as a lattice map (Definition \ref{def_dd_latt}) agrees with the dynamical degree of $T$ as a rational map (Definition \ref{def_dd_rat}).
\end{prop}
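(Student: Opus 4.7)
The plan is to compute both dynamical degrees using the ample divisor $\Delta = \sum_{i \in G} H_i$ on $X = (\PP^N)^G$, where $H_i$ denotes the pullback of the hyperplane class via projection to the $i$-th factor. The key structural observation is that shift-invariance forces pullback by every iterate $T^m$ to preserve $\Delta$ up to a scalar, and that this scalar is exactly $\deg(T^m)$ in the lattice-map sense.

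To make this precise, I would fix $m \geq 1$ and write the local rule of $T^m$ in lowest terms as $(S^{(m)}, \varphi^{(m)})$ with multi-degree $(d_s)_{s \in S^{(m)}}$. Since $T^m$ is itself a lattice map (dominance ensures the iterate exists and has a lowest-terms rule), shift-equivariance implies that the $i$-th component of $T^m$, viewed as a rational map $X \dashrightarrow \PP^N$, has multi-degree $d_s$ in the factor indexed by $i+s$ for each $s \in S^{(m)}$ and multi-degree $0$ in all other factors. Lemma \ref{lemma_pn_pullbacks} applied to this component gives
\[(T^m)^* H_i \;=\; \sum_{s \in S^{(m)}} d_s \, H_{i+s}.\]
Summing over $i \in G$ and reindexing the inner sum (using that translation by $s$ permutes $G$), I get
\[(T^m)^* \Delta \;=\; \sum_{s \in S^{(m)}} d_s \, \Delta \;=\; \deg(T^m) \cdot \Delta.\]

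Plugging this into Definition \ref{def_dd_rat},
\[\lambda_1(T)_{\mathrm{rat}} \;=\; \lim_{m \to \infty} \bigl( (T^m)^* \Delta \cdot \Delta^{\dim X - 1} \bigr)^{1/m} \;=\; \lim_{m \to \infty} \bigl( \deg(T^m) \cdot \Delta^{\dim X} \bigr)^{1/m}.\]
The self-intersection $\Delta^{\dim X}$ equals the positive multinomial coefficient $\binom{|G|N}{N,\dots,N}$, so its $m$-th root tends to $1$, and the limit reduces to $\lim_{m \to \infty} \deg(T^m)^{1/m} = \lambda_1(T)_{\mathrm{lattice}}$ by Definition \ref{def_dd_latt}.

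The main subtlety worth flagging is that I apply Lemma \ref{lemma_pn_pullbacks} \emph{directly} to the lowest-terms local rule for $T^m$, rather than iterating a single-step pullback formula $(T^*)^m$. The latter approach would encounter the usual algebraic stability issue that $(T^m)^* \neq (T^*)^m$ in $\Num X$ whenever exceptional divisors interact with indeterminate loci; degeneracies in the $m$-fold composition of the defining polynomials are exactly what get cancelled in producing the lowest-terms rule for $T^m$. Since Lemma \ref{lemma_pn_pullbacks} only requires a rational map written in lowest terms, applying it to $T^m$ itself sidesteps algebraic stability entirely.
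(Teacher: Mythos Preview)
Your proof is correct and follows essentially the same approach as the paper's: choose the ample class $\Delta = \sum_i H_i$, use shift-equivariance together with Lemma~\ref{lemma_pn_pullbacks} applied to the lowest-terms local rule of $T^m$ to obtain $(T^m)^*\Delta = (\deg T^m)\,\Delta$, and then take the $m$-th root limit. Your explicit identification of $\Delta^{\dim X}$ as a multinomial coefficient and your closing remark on why working with the lowest-terms rule for $T^m$ sidesteps algebraic stability are nice additions, but the argument is otherwise the same.
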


\begin{proof}
    Given $m \geq 0$, let $\varphi_m$ be a local rule for $T^m$ with neighborhood taken to be all of $G$. Let $n = \#G$. Then $\varphi_m: (\PP^N)^G \to \PP^N$ has some multidegree $(d_{m,g} : g \in G)$, and $\deg \varphi_m = \sum_{g \in G} d_{m,g}$. 
    
    Let $H$ denote the hyperplane class on $\PP^N$. For all $g \in G$, let $\pi_g : (\PP^N)^G \to \PP^N$ denote the projection to factor $g$, and let $H_g = \pi_g^* H$. Then by Lemma \ref{lemma_pn_pullbacks} and shift equivariance of $T^m$, we have
    $$ (T^m)^* H_g =  \sum_{j \in G} d_{m,j} H_{g+j}.$$
    Let $\Delta = \sum_{g \in G} H_g$. Then
    \begin{align*}
    (T^m)^* \Delta &= \sum_{g \in G} (T^m)^* H_g \\
    &=  \sum_{g \in G} \sum_{j \in G} d_{m,j} H_{g+j} \\
    &=  \sum_{h \in G} \left( \sum_{i \in G} d_{m,h-i} \right) H_{h} \\
    &= \sum_{h \in G} (\deg \varphi_m) H_{h} \\
    &= (\deg \varphi_m) \Delta.
    \end{align*}
    So 
    \begin{equation}
        (T^m)^* \Delta \cdot \Delta^{nN - 1} = \Delta^{nN} \deg \varphi_m.
        \label{eq_consistency_proof}
    \end{equation}
    By amplitude of $\Delta$, the top-level intersection multiplicity $\Delta^{nN}$ is positive, and it is independent of $m$. Since $\Delta$ is an ample class, taking $\lim_{m \to \infty} ((\;\cdot\;)^{1/m})$ in \eqref{eq_consistency_proof} yields Definition \ref{def_dd_rat} on the left, and Definition \ref{def_dd_latt} on the right.
\end{proof}

Suppose that $G_0$ is a quotient group of $G$. An element of $(\PP^N)^G$ is called \emph{$G_0$-periodic} if entries agree whenever their indices agree in $G_0$. The set of $G_0$-periodic elements is an algebraic subset of $(\PP^N)^G$. Suppose that $T = T_\varphi$ is a lattice map on $(\PP^N)^G$ and that $\Dom T$ includes at least one $G_0$-periodic element; this occurs, for instance, if $T$ is defined by a local rule on a neighborhood $S$ whose elements are distinct modulo $G_0$. Then, since $T$ commutes with shifts, it is semiconjugate to a lattice map
$$ T_0 : (\PP^N)^{G_0} \dashrightarrow (\PP^N)^{G_0}.$$
\begin{lemma} \label{lem_dd_periodic_comparison}
    Suppose that $T$ is an iterable lattice map on $(\PP^N)^G$, that $G_0$ is a quotient group of $G$, and that the induced iterable lattice map $T_0$ on $(\PP^N)^{G_0}$ exists. Then 
\begin{equation} 
\lambda_1 (T_0) \leq \lambda_1 (T).
\end{equation}
\end{lemma}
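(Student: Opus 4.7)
The plan is to establish the pointwise degree inequality $\deg T_0^m \leq \deg T^m$ for every $m\geq 0$; taking $m$-th roots and passing to $m\to\infty$ in the defining limits of the two dynamical degrees then yields $\lambda_1(T_0) \leq \lambda_1(T)$ directly.

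First, I would fix the minimal local rule $(S,\varphi)$ for $T$ and show that local rules for $T_0$ and its iterates arise from those of $T$ by a simple variable identification under the quotient $\pi : G \to G_0$. Concretely, since $T$ commutes with all shifts and, by hypothesis, restricts to a lattice map $T_0$ on the $G_0$-periodic locus, one checks that $T_0$ is given by the local rule $(S_0,\varphi_0)$ where $S_0 = \pi(S)$ and $\varphi_0$ is obtained from $\varphi$ by the substitution $X^{(s)}_j \mapsto Y^{(\pi(s))}_j$ on the homogeneous coordinates of each factor. By induction on $m$, using associativity of composition of local rules and the fact that evaluating $\varphi^{(m)}$ on a $G_0$-periodic element only depends on the values indexed by $G_0$, the local rule for $T_0^m$ is obtained from that of $T^m$ by the same variable identification.

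Next, I would invoke the elementary observation that a multihomogeneous polynomial has its total degree preserved monomial-by-monomial under such a variable identification. Writing $\varphi^{(m)}$ in lowest terms as $(f_0,\ldots,f_N)$ with each $f_k$ of common multidegree and total degree $d_m = \deg T^m$, each monomial of $f_k$ is sent to a monomial of the same total degree $d_m$ in the new variables. Consequently each identified polynomial $(f_k)_0$ is either zero or of total degree $d_m$. Since $T_0^m$ exists as a lattice map, the tuple $((f_0)_0,\ldots,(f_N)_0)$ is a valid — though possibly non-minimal — presentation of its local rule by polynomials of total degree $d_m$. Dividing out any common factor to pass to lowest terms can only decrease the degree, giving $\deg T_0^m \leq d_m = \deg T^m$.

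Taking $m$-th roots and letting $m\to\infty$ in Definition \ref{def_dd_latt} gives $\lambda_1(T_0) \leq \lambda_1(T)$. The main technical point is verifying that the iterated local rule of $T_0$ truly equals $(\varphi^{(m)})_0$; this is a bookkeeping argument showing that ``first compose $m$ copies of $\varphi$, then identify variables'' agrees with ``first identify, then compose $m$ times,'' which holds because both describe the same lattice map $T_0^m$ and local rules on $(\PP^N)^{G_0}$ are determined by their polynomial formulas. Everything else is formal, and one could in principle write the inequality directly without invoking Fekete's lemma, but the limit version already exists by Lemma \ref{lem_first_iterate}.
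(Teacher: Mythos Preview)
Your proposal is correct and follows essentially the same approach as the paper: establish the termwise inequality $\deg T_0^m \leq \deg T^m$ by observing that a local rule for $T_0^m$ is obtained from one for $T^m$ by identifying variables along the quotient $\pi:G\to G_0$, then pass to the limit. The paper's proof is much terser---it proves $\deg T_0 \leq \deg T$ once and simply says ``apply to each term of the degree sequence''---whereas you spell out the bookkeeping that $(T^m)_0 = (T_0)^m$ and that identification preserves total degree; but the underlying argument is the same.
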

\begin{proof}    
A local rule for $T_0$ may be obtained by identifying indices in the formula for $\varphi$ modulo $G_0$. 
It follows that
\begin{equation} \label{eq_deg_periodic_comparison_termwise}
\deg T_0 \leq \deg T.
\end{equation}
Since $T$ and $T_0$ are both iterable, we may apply \eqref{eq_deg_periodic_comparison_termwise} to each term of the degree sequence.
\end{proof}

The following criterion is helpful in establishing dominance of lattice maps.
\begin{lemma} \label{lem_mod_criterion}
    Suppose that $T$ is a lattice map on $(\PP^N)^\Z$, and for infinitely many values of $n$, the induced lattice map on $(\PP^N)^{\Z/n\Z}$ exists and is dominant. Then $T$ is dominant.
\end{lemma}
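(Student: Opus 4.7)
The plan is to argue by contrapositive: I would assume $T$ is not dominant and produce an $n$ satisfying the hypothesis for which $T_n$ fails to be dominant, contradicting the hypothesis.

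First, by Definition \ref{def_dom}, non-dominance of $T$ provides a nonzero multi-homogeneous polynomial $F$ that vanishes on the set-theoretic image of $T$. Since $F$ has finitely many monomials, it depends only on coordinates indexed by some finite subset $W \subset \Z$. I would then pick $n$ from the hypothesized infinite family, large enough that the reduction $W \to \Z/n\Z$ is injective; any $n > \max W - \min W$ suffices, and infinitely many such $n$ also satisfy the assumption that $T_n$ exists and is dominant.

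Next, I would identify $(\PP^N)^{\Z/n\Z}$ with the set of $n$-periodic elements of $(\PP^N)^\Z$ via the natural embedding $i_n$. Shift equivariance of $T$ ensures that $T$ preserves periodicity, and by construction $T_n$ is the induced map on periodic polygons, so the square
\[\begin{tikzcd}
(\PP^N)^{\Z/n\Z} \arrow[r,dashed,"T_n"] \arrow[d, hook, "i_n"] & (\PP^N)^{\Z/n\Z} \arrow[d, hook, "i_n"]  \\
(\PP^N)^\Z \arrow[r, dashed,"T"] & (\PP^N)^\Z \\
\end{tikzcd}\]
commutes on $\Dom T_n$. Because $W$ injects into $\Z/n\Z$, pulling $F$ back along $i_n$ merely relabels its variables without identifying any, producing a nonzero polynomial $\bar F$ on $(\PP^N)^{\Z/n\Z}$. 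Every element in the image of $T_n$ then lifts via $i_n$ to a periodic element in the image of $T$, which lies in the vanishing locus of $F$; hence $\bar F$ vanishes on the image of $T_n$, contradicting its dominance.

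The one step worth a second look is the descent of $F$ to a \emph{nonzero} $\bar F$: this is exactly where injectivity of $W \hookrightarrow \Z/n\Z$ is used, since otherwise distinct variables of $F$ could be identified and the resulting polynomial could a priori vanish. Everything else is bookkeeping about the compatibility of shift-equivariant maps with the periodic quotient.
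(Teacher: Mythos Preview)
Your proof is correct and follows essentially the same route as the paper's: both argue by contradiction, extract a nontrivial polynomial $F$ witnessing non-dominance, pick $n$ large enough from the infinite family so that the indices appearing in $F$ are distinct modulo $n$, and conclude that the reduced polynomial is nontrivial and vanishes on the image of $T_n$. Your version is slightly more explicit about the commutative square and the reason $\bar F$ remains nonzero, but there is no substantive difference.
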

\begin{proof}
Suppose by way of contradiction that the image of $T$ lies in some proper algebraic subset $A$. Let $F$ be a nontrivial multi-homogeneous polynomial that vanishes identically on $A$. Let $W \subset \Z$ be the set of indices appearing in $F$. Given $n$, let $F_0$ be the polynomial obtained from $F$ by reducing indices of variables modulo $n$. If $n$ is sufficiently large, the elements of $W$ are distinct modulo $n$, so $F_0$ is nontrivial. Therefore the vanishing locus of $F_0$ is a proper algebraic subset of $(\PP^N)^{\Z/n\Z}$ containing the image of the induced lattice map on $(\PP^N)^{\Z/n\Z}$.
\end{proof}

Verifying dominance of lattice maps may otherwise be difficult, as indicated by the following pair of examples.
\begin{example}
    Let $T$ be the lattice map on $(\A^2)^\Z$ defined by the local rule
    $$ \varphi((x_{0},y_{0}), (x_{1},y_1)) = (x_0,x_1).$$
    Then $\varphi$ is dominant, but $T$ is not.
\end{example}

\begin{example}
    Let $T$ be the lattice map on $(\A^1)^\Z$ defined by the local rule
    $$ \varphi(x_{0}, x_{1}) = x_0 - x_1.$$
    Then $T$ is dominant, but none of the induced lattice maps on $(\A^1)^{\Z/n\Z}$ are.
\end{example}

\section{Skew pentagram maps}

\subsection{Skew pentagram maps as lattice maps}
We work in the projective plane $\PP^2$ over an algebraically closed field $\bk$. A \emph{polygon} is a bi-infinite sequence $v \in (\PP^2)^\Z$. 
 The point $v_i$ in $\PP^2$ is the \emph{$i$-th vertex}.  
 The \emph{diagonals} of $v$ are the lines $\ovl{v_i v_j}$, for all pairs $i,j \in \Z$ such that $v_i \neq v_j$. If $v_i = v_j$, we say that the corresponding diagonal is undefined.
 We make no assumptions on the relative positions of the vertices, so degenerate configurations such as constant sequences are polygons.
 
A \emph{closed $n$-gon} is an element of $(\PP^2)^{n}$. We interpret closed $n$-gons as $n$-periodic infinite polygons, identifying $(\PP^2)^{\Z/n\Z}$ with $(\PP^2)^n$ via $(v_i)_{i \in \Z} \mapsto (v_1, \ldots, v_n)$.

Let $a,b,c,d \in \Z$ be distinct. The \emph{skew pentagram map} associated to the $4$-tuple $(a,b,c,d)$ is the lattice map
\begin{align}
    T_{a,b,c,d} & : (\PP^2)^\Z \dashrightarrow (\PP^2)^\Z 
\end{align}
with local rule $(S, \varphi)$ defined by
$$S = \{a,b,c,d\},$$
$$\varphi : (\PP^2)^S \dashrightarrow \PP^2,$$
$$(v_s : s \in S) \mapsto \ovl{v_a v_b} \cap \ovl{v_c v_d}.$$
For all $i \in \Z$, we have
\begin{equation}
    v'_i = \varphi(v_{i+s} : s \in S) = \ovl{v_{i+a} v_{i+b}} \cap \ovl{v_{i+c} v_{i+d}}, \label{eq_def_phi_skew}
\end{equation}
consistent with \eqref{eq_sk_intro}.

Note that $\Dom T_{a,b,c,d}$ includes all polygons such that, for all $i \in \Z$, the diagonals in \eqref{eq_def_phi_skew} are defined, and $\ovl{v_{i+a} v_{i+b}} \neq \ovl{v_{i+c} v_{i+d}}$. It follows that, if $n \geq 4$ and $a,b,c,d$ are distinct modulo $n$, then there is an induced lattice map on closed $n$-gons, 
$$T^{(n)}_{a,b,c,d}: (\PP^2)^n \dashrightarrow (\PP^2)^n,$$
which is also just a rational map.

The sets $\{a,b\}$ and $\{c,d\}$ are the \emph{defining diagonals} of $T_{a,b,c,d}$. The \emph{lengths} of the defining diagonals are $\abs{b-a}$ and $\abs{d-c}$, respectively.
We say that $T_{a,b,c,d}$ is \emph{equal-length} if $\abs{b-a} = \abs{d-c}$. Otherwise, we say that $T_{a,b,c,d}$ is \emph{truly skew}.

\begin{obs}
We required $a,b,c,d$ to be distinct to avoid trivialities. If any of these parameters coincide, the operation \eqref{eq_def_phi_skew} either involves undefined quantities or just produces a shift.
\end{obs}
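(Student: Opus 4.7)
The plan is a short case analysis on which of the parameters $a,b,c,d$ coincide. Up to the symmetry of \eqref{eq_def_phi_skew} (which is invariant under swapping $a \leftrightarrow b$, swapping $c \leftrightarrow d$, and swapping the unordered pairs $\{a,b\} \leftrightarrow \{c,d\}$), there are three distinct coincidence patterns to handle.

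First I would dispose of the case $a=b$ (symmetrically $c=d$): then $v_{i+a} = v_{i+b}$ for every polygon $v$, so the diagonal $\ovl{v_{i+a}v_{i+b}}$ collapses to a single point and is undefined by the convention in Definition \ref{def_penta_intro}. Thus $\varphi$ is nowhere defined, so the lattice map is undefined in the strongest sense.

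Next I would handle the case $\{a,b\} = \{c,d\}$ as unordered pairs: then $\ovl{v_{i+a}v_{i+b}} = \ovl{v_{i+c}v_{i+d}}$ identically, the two lines coincide, and their intersection is not a well-defined point of $\PP^2$, so again $\varphi$ is undefined. Finally, the remaining case is $\#(\{a,b\} \cap \{c,d\}) = 1$; say $b = c$ after relabeling. Then $v_{i+b}$ lies on both diagonals, and for a generic polygon the two diagonals are distinct and meet only at $v_{i+b}$, so $\varphi$ evaluates to $v_{i+b}$. This means $T_{a,b,c,d} = \Sigma_b$, the shift map by $b$, on its domain of definition.

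The only conceptual point is that ``undefined'' here means undefined as a rational map $\varphi : (\PP^2)^S \dashrightarrow \PP^2$, not merely undefined at a specific input. In the first two cases above, the image of every polygon under the local rule is either a line (not a point of $\PP^2$) or the empty locus, so there is no codomain value at any point; in the third, $\varphi$ extends to a regular map agreeing with the shift, producing no new dynamics. This covers every way two or more of $a,b,c,d$ can coincide, which is the claim.
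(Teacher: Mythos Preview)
Your case analysis is correct and complete. In the paper this statement is presented as a bare observation with no accompanying proof, so there is nothing to compare against; your three-case decomposition (a diagonal collapses, the two diagonals coincide, or they share exactly one endpoint) is exactly the routine verification the paper leaves to the reader.
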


\begin{obs} \label{obs_shift}
Composing by a shift allows us to reduce the study of individual skew pentagram maps to the case $a = 0$, so the family of skew pentagram maps is indexed by only $3$ parameters up to shifts.
\end{obs}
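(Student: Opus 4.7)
The plan is to verify directly from the definitions that, for any distinct $a,b,c,d \in \Z$, the map $T_{a,b,c,d}$ differs from $T_{0,b-a,c-a,d-a}$ by a shift, and to package this into the parameter count.

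First I would unfold \eqref{eq_def_phi_skew}. Writing $T = T_{a,b,c,d}$ and $T' = T_{0,b-a,c-a,d-a}$, the local rule for $T$ gives
\[
T(v)_i \;=\; \ovl{v_{i+a}v_{i+b}} \cap \ovl{v_{i+c}v_{i+d}},
\]
while the local rule for $T'$ gives
\[
T'(v)_j \;=\; \ovl{v_{j}v_{j+(b-a)}} \cap \ovl{v_{j+(c-a)}v_{j+(d-a)}}.
\]
Setting $j = i+a$ on the right-hand side makes the two formulas identical, so $T(v)_i = T'(v)_{i+a}$ for every $i \in \Z$. By the definition of the shift map $\Sigma_a$ from Section \ref{sect_latt}, this reads $T_{a,b,c,d} = \Sigma_a \circ T_{0,b-a,c-a,d-a}$.

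Next I would note that $\Sigma_a$ is a (regular, invertible) lattice map, so composition by $\Sigma_a$ is a legitimate operation in the category of lattice maps on $(\PP^2)^\Z$. Hence every skew pentagram map is a shift composed with one whose first parameter is $0$. The residual parameters $(b-a,c-a,d-a)$ are three integers, pairwise distinct and all nonzero, since $a,b,c,d$ were distinct; conversely any such triple produces a valid skew pentagram map with $a=0$. This gives a bijection between skew pentagram maps up to left-composition with shifts and triples of distinct nonzero integers, confirming the $3$-parameter count.

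There is no real obstacle here: the whole content is that the local rule of a skew pentagram map depends only on the relative offsets of the indices in its defining neighborhood $S = \{a,b,c,d\}$, and translating $S$ by $-a$ is implemented by post-composing with $\Sigma_a$.
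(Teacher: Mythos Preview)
Your argument is correct and is exactly the verification the paper has in mind; the paper states this as an observation without proof, and your unpacking of \eqref{eq_def_phi_skew} together with the shift identity $T_{a,b,c,d}=\Sigma_a\circ T_{0,b-a,c-a,d-a}$ is the natural (and only) way to justify it.
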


\begin{obs} \label{obs_Z_conj}
Up to renaming the parameters and conjugating by an automorphism of the underlying group $\Z$, every skew pentagram map is of the form $T_{a,b,c,d}$, where $a<b$, $a<c$, and $c<d$. So we may roughly sort the class of pentagram maps into three kinds, depending on whether $b < c$, $c < b < d$, or $d < b$; the last kind contains only truly skew maps. The direction-reversal involution
    $$\Phi : (\PP^2)^\Z \to (\PP^2)^\Z,$$
    $$(\Phi v)_i = v_{-i}$$ 
    conjugates $T_{a,b,c,d}$ to $T_{-a,-b,-c,-d}$.
So $\Phi$ conjugates an equal-length map to a shift of itself.
\end{obs}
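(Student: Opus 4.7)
The proof will be a sequence of four direct verifications, each leveraging one symmetry of the local rule $\varphi$ or of the ambient group $\Z$.

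First, to establish the normal form, I use the three obvious symmetries of the operation $\overline{v_a v_b}\cap \overline{v_c v_d}$: swapping $a\leftrightarrow b$, swapping $c\leftrightarrow d$, and exchanging the two defining diagonals $\{a,b\}\leftrightarrow\{c,d\}$. Each preserves the lattice map $T_{a,b,c,d}$, so they are renamings in the sense of the observation. Applying these in turn, I may force $a<b$, then $c<d$, and then (by swapping diagonals if needed) $\min\{a,b\}<\min\{c,d\}$, which together give $a<b$, $a<c$, $c<d$.

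Second, I enumerate the three cases. Given this normal form together with the distinctness of $a,b,c,d$, the position of $b$ relative to $\{c,d\}$ must satisfy exactly one of $b<c$, $c<b<d$, or $d<b$. In the last case we have $a<c<d<b$, and then
\[
(b-a)-(d-c)=(b-d)+(c-a)>0,
\]
so $\abs{b-a}\neq \abs{d-c}$ and the map is truly skew.

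Third, I verify the conjugation formula by direct substitution. Since $(\Phi v)_j=v_{-j}$ and $\Phi^2=\Id$, applying the local rule \eqref{eq_def_phi_skew} gives
\[
(\Phi\circ T_{a,b,c,d}\circ \Phi)(v)_i
=\overline{(\Phi v)_{-i+a}(\Phi v)_{-i+b}}\cap\overline{(\Phi v)_{-i+c}(\Phi v)_{-i+d}}
=\overline{v_{i-a}v_{i-b}}\cap\overline{v_{i-c}v_{i-d}},
\]
which matches the definition of $T_{-a,-b,-c,-d}(v)_i$.

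Fourth, I handle the equal-length consequence. After normalizing, set $\ell=b-a=d-c$. By Observation \ref{obs_shift}, adding a common integer $k$ to all four parameters realizes the shift $\Sigma_k$, i.e.\ $T_{a+k,b+k,c+k,d+k}=\Sigma_k\circ T_{a,b,c,d}$. Taking $k=-(a+c+\ell)$, the shifted parameter $4$-tuple is $(-c-\ell,-c,-a-\ell,-a)$, whose unordered defining diagonals are $\{-d,-c\}$ and $\{-b,-a\}$, precisely those of $T_{-a,-b,-c,-d}$. Since $T$ depends only on the unordered pair of unordered diagonals (by the symmetries used in the first step), we conclude $T_{-a,-b,-c,-d}=\Sigma_k\circ T_{a,b,c,d}$. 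The only mild obstacle is bookkeeping the unordered-pair symmetry in this last identification; everything else is routine.
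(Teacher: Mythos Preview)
Your argument is correct in all four parts. The paper states this as an observation with no accompanying proof, treating each claim as self-evident; your write-up supplies exactly the routine verifications (the order-$8$ symmetry group of the unordered pair of unordered diagonals for the normal form, the inequality $(b-a)-(d-c)=(b-d)+(c-a)>0$ for the truly-skew case, the direct substitution for the $\Phi$-conjugation, and the shift by $k=-(a+c+\ell)$ for the equal-length consequence) that the paper leaves to the reader.
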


\begin{obs} \label{obs_cart_power}
    On infinite polygons,
    if $k \in \N$, then $T_{ka,kb,kc,kd}$ acts separately on vertices in each residue class modulo $k$, so $T_{ka,kb,kc,kd}$ is conjugate to the $k$-fold Cartesian power of $T_{a,b,c,d}$. The main case of interest is therefore when $\gcd(a,b,c,d) = 1$. Similarly, on closed $n$-gons, the main case of interest is when $\gcd(n,a,b,c,d) = 1$. Otherwise, we can express $T^{(n)}_{a,b,c,d}$ as a Cartesian power of a simpler map.
\end{obs}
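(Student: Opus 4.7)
The plan is to exhibit the decomposition of $(\PP^2)^\Z$ by residue classes modulo $k$ and verify that the local rule respects it. Inspecting the formula
\[ v'_i = \ovl{v_{i+ka}\,v_{i+kb}} \cap \ovl{v_{i+kc}\,v_{i+kd}}, \]
every index appearing on the right side is congruent to $i$ modulo $k$. Hence $v'_i$ depends only on the subsequence $\{v_j : j \equiv i \pmod{k}\}$, so $T_{ka,kb,kc,kd}$ preserves the bijective decomposition
\[ \Psi_k : (\PP^2)^\Z \longrightarrow \prod_{r=0}^{k-1} (\PP^2)^\Z, \qquad v \mapsto (w^{(0)}, \ldots, w^{(k-1)}), \]
where $w^{(r)}_j \colonequals v_{kj+r}$.

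Next I would verify that the induced map on each factor is a copy of $T_{a,b,c,d}$. Substituting $i = kj + r$ into the local rule and rewriting the result in terms of $w^{(r)}$, I obtain
\[ (w^{(r)})'_j = \ovl{w^{(r)}_{j+a}\,w^{(r)}_{j+b}} \cap \ovl{w^{(r)}_{j+c}\,w^{(r)}_{j+d}}, \]
which is precisely the local rule \eqref{eq_def_phi_skew} defining $T_{a,b,c,d}$. Thus $\Psi_k$ conjugates $T_{ka,kb,kc,kd}$ to the $k$-fold Cartesian product $(T_{a,b,c,d})^{\times k}$, as asserted.

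For the closed case, set $g = \gcd(n,a,b,c,d)$ and write $n = g n'$ and $(a,b,c,d) = (ga', gb', gc', gd')$. Since all indices appearing in the local rule of $T^{(n)}_{a,b,c,d}$ lie in the same residue class modulo $g$, and $n$ is itself a multiple of $g$, the analogous decomposition
\[ (\PP^2)^{\Z/n\Z} \;\longrightarrow\; \prod_{r=0}^{g-1} (\PP^2)^{\Z/n'\Z} \]
conjugates $T^{(n)}_{a,b,c,d}$ to $\bigl(T^{(n')}_{a',b',c',d'}\bigr)^{\times g}$, a Cartesian power of a skew pentagram map on strictly smaller closed polygons (assuming $g > 1$). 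The only step requiring care is observing that $g \mid n$ guarantees that each residue class modulo $g$ contains exactly $n'$ indices and is permuted internally by the local rule, so the closed-polygon structure descends correctly. There is no substantive obstacle beyond this index bookkeeping.
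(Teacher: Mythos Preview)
Your argument is correct. The paper states this as an observation without proof, so there is nothing to compare against; your explicit decomposition $\Psi_k$ and the index substitution $i = kj + r$ make the claim fully rigorous, and your treatment of the closed case correctly identifies $g = \gcd(n,a,b,c,d)$ as the relevant factor.
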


\begin{example}
    Every map $T^{(4)}_{a,b,c,d}$ on quadrilaterals is equal-length and non-iterable. Up to shifts and conjugacies, the only skew pentagram maps on closed polygons are the classical pentagram map $T^{(5)}_{0,2,1,3}$ and the truly skew map $T^{(5)}_{0,2,1,4}$. 
\end{example}

\subsection{Skew pentagram maps on moduli spaces}

The local rule \eqref{eq_def_phi_skew} only uses the projectively natural operations of forming and intersecting lines, so it respects the diagonal action of the group $\PGL_3$ of projective transformations. Therefore, skew pentagram maps also respect the diagonal action of $\PGL_3$. More precisely, given $A \in \PGL_3$ and $v = (v_i)_{i \in \Z} \in (\PP^2)^\Z$, let
$$A \cdot v = (Av_i)_{i \in \Z}.$$
If $v \in \Dom T_{a,b,c,d}$, then $A \cdot v \in \Dom T_{a,b,c,d}$, and
$$A \cdot (T_{a,b,c,d} (v)) = T_{a,b,c,d} (A \cdot v).$$
It follows that there is an induced partially-defined self-map of the set of projective equivalence classes of polygons, denoted 
$$\bar{T}_{a,b,c,d} : (\PP^2)^\Z/\PGL_3 \dashrightarrow (\PP^2)^\Z/\PGL_3.$$
Similarly, the skew pentagram map $T^{(n)}_{a,b,c,d}$ commutes with the diagonal action of $\PGL_3$ on $(\PP^2)^n$ given by
$$A \cdot (v_1, \ldots, v_n) = (Av_1, \ldots, Av_n).$$
It follows that there is an induced partially-defined map $$\bar{T}^{(n)}_{a,b,c,d}: (\PP^2)^n/\PGL_3 \dashrightarrow (\PP^2)^n/\PGL_3.$$
For the purposes of algebraic dynamics, we wish to view $\bar{T}^{(n)}_{a,b,c,d}$ as a rational map. However, its domain is just a topological space, not a variety. We remedy this in standard fashion, by throwing away certain ``bad'' points in the domain.

\begin{defn}
An $n$-tuple $(v_1, \ldots, v_n)$ of points in $\PP^2$ is in \emph{general linear position} if:
\begin{enumerate}
    \item for all distinct pairs $1 \leq i,j \leq n$, we have $v_i \neq v_j$, and
    \item for all distinct triples $1 \leq i,j,k \leq n$, the points $v_i, v_j, v_k$ are not collinear.
\end{enumerate}
The space of $n$-tuples in general linear position is denoted $(\PP^2)^n_\GLP$.
\end{defn}

A \emph{projective frame} is a $4$-tuple in $\PP^2$ of points in general linear position. This is the projective analogue of the notion of basis in linear algebra. Given any two projective frames $(v_1,v_2,v_3,v_4)$ and $(w_1,w_2,w_3,w_4)$, there is a unique projective transformation $A$ such that
$$ A \cdot (v_1,v_2,v_3,v_4) = (w_1,w_2,w_3,w_4). $$

Let $\mc{C}_n = (\PP^2)^n_\GLP / \PGL_3$ denote the moduli space of $n$-gons in general linear position up to $\PGL_3$-equivalence. Since $n \geq 4$, $\mc{C}_n$ is nonempty, exists as an algebraic variety, is a fine moduli space, and is a Zariski open subset of $\A^{n-8}$. Indeed, we can impose a normal form on $n$-gons in general linear position as follows. Select $4$ distinct indices $i_1, i_2, i_3, i_4$ and a preferred choice of projective frame. Then, given $v \in (\PP^2)^n_\GLP$, there is a unique projective transformation taking $(v_{i_1}, v_{i_2}, v_{i_3}, v_{i_4})$ to the preferred frame, leaving $n-4$ vertices that vary in $\PP^2$. It follows that
$$(\PP^2)^n_{\GLP} \cong \mc{C}_n \times \PGL_3.$$

If $T_{a,b,c,d}^{(n)}$ is dominant, then there is an induced dominant rational map $\bar{T}_{a,b,c,d}^{(n)}$ satisfying the commutative diagram
\begin{center}
\begin{tikzcd}
    (\PP^2)^{n} \arrow[r, "T_{a,b,c,d}^{(n)}",dashed] \arrow[d,dashed] & (\PP^2)^{n} \arrow[d,dashed] \\
    \mc{C}_n \arrow[r,"\bar{T}_{a,b,c,d}^{(n)}",dashed]  & \mc{C}_n
\end{tikzcd}.
\end{center}

\begin{remark}
There are larger moduli spaces birational to $\mc{C}_n$ that allow for some more degenerate polygons, defined using geometric invariant theory \cite{GIT} or corner invariants \cite{OST}. The choice of model does not matter for our purposes because dynamical degrees are birational invariants.
\end{remark}

\subsection{Iterability and dominance}

The next lemma assures us that the first dynamical degree of a skew pentagram map over $\C$ is meaningful.
\begin{lemma} \label{lem_iterable}
    Over $\C$, skew pentagram maps $T_{a,b,c,d}$ are iterable.
\end{lemma}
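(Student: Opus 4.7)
My plan is to reduce iterability to dominance of $T_{a,b,c,d}$ as a lattice map. Once dominance is established, iterability is immediate: by the proposition immediately preceding this lemma, dominant lattice maps on $(\PP^N)^G$ form a category, so every iterate of a dominant lattice map is itself dominant and hence composable with $T$ to produce the next iterate.

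To establish dominance, I would invoke Lemma \ref{lem_mod_criterion}: it suffices to show that for infinitely many $n$, the induced rational map $T^{(n)}_{a,b,c,d} : (\PP^2)^n \dashrightarrow (\PP^2)^n$ on closed $n$-gons is dominant. For any $n$ strictly greater than the diameter of $\{a, b, c, d\}$, the four indices are distinct modulo $n$, so the induced map exists; there are infinitely many such $n$.

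For each such $n$, dominance of the rational self-map $T^{(n)}_{a,b,c,d}$ on the smooth irreducible variety $(\PP^2)^n$ reduces to exhibiting a single polygon $v_0$ at which the differential is surjective (an extension of Lemma \ref{lem_homog_jacobian} to products of projective spaces, or equivalently applied on affine charts). Over $\C$, I would take $v_0$ with algebraically independent transcendental coordinates over the prime field; for such $v_0$, non-vanishing of the Jacobian reduces to non-vanishing of a determinantal polynomial in the vertex coordinates, which can in turn be verified by specialization to one convenient explicit polygon, such as vertices on a smooth conic.

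The main obstacle is the Jacobian check itself. The Jacobian of $T^{(n)}_{a,b,c,d}$ is a block-circulant $2n \times 2n$ matrix: each row of new-vertex partial derivatives involves only the four columns indexed by $\{i+a, i+b, i+c, i+d\} \pmod n$. Exploiting shift-equivariance, I would diagonalize via a discrete Fourier transform over $\Z/n\Z$, reducing the full determinant to a product of $n$ Fourier-mode $2 \times 2$ determinants. Each such determinant is a linear combination (with roots-of-unity coefficients) of the four partial differentials of the line-intersection rule $\varphi$, and the key point is that these four $2 \times 2$ blocks arise from intersections of generic lines in $\PP^2$, which are non-degenerate maps in each argument. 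Non-vanishing of each Fourier mode is then a short projective-geometric calculation, carried out once and for all at the transcendental witness $v_0$.
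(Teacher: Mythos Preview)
Your overall strategy---deduce iterability from dominance via the proposition that dominant lattice maps form a category---is sound, and in fact the paper does exactly this a few lines later (Proposition~\ref{prop_dom} and its corollary) to obtain iterability over every algebraically closed field. However, the paper's proof of the present lemma is far more direct: over $\C$ one simply exhibits a single polygon whose entire forward orbit stays in the domain, namely a regular $n$-gon in $\R^2$ for large $n$, which is sent by $T_{a,b,c,d}$ to a projectively equivalent polygon (a dilatation of itself). Since the domain is $\PGL_3$-invariant, this one observation finishes the argument.

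Your proposed route to dominance contains a real gap. You assert that the Jacobian of $T^{(n)}_{a,b,c,d}$ is block-circulant and can therefore be Fourier-diagonalized over $\Z/n\Z$. But only the \emph{sparsity pattern} of this Jacobian is circulant: the block $\partial v'_i/\partial v_j$ is nonzero only when $j-i\in\{a,b,c,d\}\pmod n$. The block \emph{values} themselves are the partial differentials of the intersection rule $\varphi$ evaluated at $(v_{i+a},v_{i+b},v_{i+c},v_{i+d})$, and these genuinely depend on $i$ unless the polygon $v$ has cyclic symmetry. At your proposed ``transcendental witness'' or at generic points on a conic, the Jacobian is not block-circulant, and the discrete Fourier transform does not diagonalize it. If instead you evaluate at a regular $n$-gon, the matrix \emph{is} block-circulant (after accounting for the rotation), but then you must still verify that each of the $n$ Fourier-mode $2\times 2$ determinants is nonzero---and you have not done this. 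The paper's geometric proofs of dominance (Lemmas~\ref{lem_dom_outer} and~\ref{lem_dom_quadrilateral_trick}) sidestep this entirely; the latter in particular \emph{is} a differential check at a special polygon, but one chosen so that the differential visibly permutes an explicit basis, avoiding any determinant computation.
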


\begin{proof}
    It suffices to find a polygon that is carried to a projectively equivalent polygon by $T_{a,b,c,d}$. For sufficiently large $n$, a regular $n$-gon $v$ in $\R^2 \subset \C^2 \subset \PP^2_\C$ suffices. To see this, observe that if $n$ is sufficiently large, then $T_{a,b,c,d}$ is defined at $v$, and its image $T_{a,b,c,d}(v)$ is a dilatation of $v$.
\end{proof}

While iterability alone is enough to define dynamical degrees, we will later need to choose special orbits of skew pentagram maps that avoid various kinds of collapsing behavior. We want to know that, in some sense, generic orbits stay generic. We make this formal via the notion of dominant lattice map (Definition \ref{def_dom}).

\begin{prop} \label{prop_dom}
Skew pentagram maps $T_{a,b,c,d}$ are dominant over any algebraically closed field $\bk$.    
\end{prop}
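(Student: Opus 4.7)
The plan is to invoke Lemma \ref{lem_mod_criterion} to reduce dominance of $T_{a,b,c,d}$ as a lattice map to dominance of the induced rational self-maps $T^{(n)}_{a,b,c,d}$ of $(\PP^2)^n$ for infinitely many $n$. I will take $n$ to be coprime to $\Char \bk$ and large enough that $a, b, c, d$ are distinct modulo $n$; this produces an infinite family of admissible $n$ regardless of the characteristic.

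For such $n$, since $(\PP^2)^n$ is irreducible of dimension $2n$, a multi-projective analog of the Jacobian criterion in Lemma \ref{lem_homog_jacobian} reduces dominance of $T^{(n)}_{a,b,c,d}$ to exhibiting a single polygon $v^* \in \Dom T^{(n)}_{a,b,c,d}$ at which the differential $D_{v^*} T^{(n)}_{a,b,c,d}$ is nonsingular. My test polygon would be the conic polygon $v_i^* = [1 : \zeta^i : \zeta^{2i}]$, where $\zeta \in \bk$ is a primitive $n$-th root of unity. Distinct powers of $\zeta$ give distinct, non-collinear points on the smooth conic $XZ = Y^2$, so the defining diagonals are transverse lines and $v^* \in \Dom T^{(n)}_{a,b,c,d}$. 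Crucially, the projective transformation $A = \diag(1, \zeta, \zeta^2) \in \PGL_3$ satisfies $A \cdot v_i^* = v_{i+1}^*$, so the shift action on $v^*$ is implemented by an element of $\PGL_3$.

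This twisted symmetry lets me diagonalize the differential via a discrete Fourier transform. Shift-equivariance of $T^{(n)}_{a,b,c,d}$ combined with the $A$-twisted invariance of $v^*$ shows that $D_{v^*} T^{(n)}_{a,b,c,d}$ intertwines a natural $\Z/n\Z$-action on the tangent spaces $T_{v_i^*} \PP^2 \cong \bk^2$. Since $n$ is invertible in $\bk$, the DFT decomposes the $2n$-dimensional total tangent space into $n$ character-isotypic subspaces of dimension $2$, block-diagonalizing $D_{v^*} T^{(n)}_{a,b,c,d}$ into $2 \times 2$ blocks $B_0, \ldots, B_{n-1}$. Each $B_k$ has entries that are Laurent polynomials in $\zeta^k, \zeta^a, \zeta^b, \zeta^c, \zeta^d$, obtained by linearizing the intersection-of-diagonals formula $\varphi(v) = \ovl{v_a v_b} \cap \ovl{v_c v_d}$ with cross products.

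The last step, which I expect to be the main technical obstacle, is to verify $\prod_k \det B_k \neq 0$. This amounts to showing that the explicit formula for $\det B_k$ is a nontrivial Laurent polynomial in $\zeta^k$, using the hypothesis that $\zeta^a, \zeta^b, \zeta^c, \zeta^d$ are distinct elements of $\bk^\times$. The bookkeeping of exponents and the choice of trivialization of $T\PP^2$ along the conic make this computation somewhat delicate, but the DFT reduces everything to a finite-dimensional statement, and a generic argument in $\zeta$ (over some infinite subfamily of admissible $n$) suffices to conclude that $T^{(n)}_{a,b,c,d}$ is dominant, completing the proof via Lemma \ref{lem_mod_criterion}.
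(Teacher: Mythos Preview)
Your approach is genuinely different from the paper's, and the overall architecture via Lemma~\ref{lem_mod_criterion} plus a Jacobian check at a symmetric test polygon is sound. The paper, by contrast, splits into two cases: for the ordering $a<c<d<b$ it argues directly (Lemma~\ref{lem_dom_outer}) that the regional maps $\varphi_{\{1,\ldots,n\}}$ are surjective by an elementary inductive reconstruction of preimages, never touching the differential at all; for the remaining orderings it uses Lemma~\ref{lem_dom_quadrilateral_trick}, whose test polygon is the periodic extension of a \emph{general affine $d$-gon} rather than a conic polygon. At that test point the differential of $T^{(n)}_{0,b,c,d}$ is shown, by a short synthetic argument, to permute an explicit basis $\alpha_i = \overrightarrow{p_i p_{i+c}}$, $\beta_i = \overrightarrow{p_i p_{i+b}}$ of the tangent space, so nonsingularity is immediate and no spectral computation is needed. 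Your conic polygon and DFT are more elegant and uniform in $(a,b,c,d)$, but they trade a case split for a harder endgame.

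That endgame is where your proposal has a real gap. You reduce to $\prod_k \det B_k \neq 0$ and then say it ``amounts to showing that $\det B_k$ is a nontrivial Laurent polynomial in $\zeta^k$.'' That is not sufficient: you need $\det B(\omega)$ to be nonzero at \emph{every} $n$-th root of unity $\omega = \zeta^k$, not merely generically. Since the coefficients of $\det B(\omega)$ themselves depend on $\zeta$ (through the matrices $M_s$ coming from the linearization at $v^*$), the bad locus is a curve in the $(\zeta,\omega)$-torus, and you must rule out it containing any of the $n$ points $(\zeta,\zeta^k)$. A ``generic $\zeta$'' argument does not obviously survive this: the number of conditions grows with $n$, and a component of the form $\omega = \zeta^{k_0}$ in $\{\det B = 0\}$ would kill every large $n$. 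You would need either an explicit computation showing no such component exists, or an a priori reason, and you have supplied neither. The paper's choice of the periodic $d$-gon sidesteps exactly this difficulty: the differential there is a permutation matrix on the nose, so no nonvanishing needs to be checked.
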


\begin{cor}
Skew pentagram maps are iterable over any algebraically closed field $\bk$.
\end{cor}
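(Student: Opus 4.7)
The plan is to deduce this corollary immediately from Proposition \ref{prop_dom} together with the general categorical fact established earlier in Section \ref{sect_latt}. Recall that the paper already proved that dominant lattice maps on $(\PP^N)^G$ form a category: the composition of two dominant lattice maps is again a dominant lattice map, defined via the composition of the associated regional rational maps on the combined neighborhood. In particular, every dominant lattice map is composable with itself and therefore with all of its iterates.

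Concretely, I would first invoke Proposition \ref{prop_dom} to conclude that $T_{a,b,c,d}$ is a dominant lattice map on $(\PP^2)^\Z$ over the fixed algebraically closed base field $\bk$. Then I would apply the general proposition that dominant lattice maps form a category: by an easy induction on $m$, if $T_{a,b,c,d}^{m-1}$ is a dominant lattice map, then $T_{a,b,c,d}^m = T_{a,b,c,d} \circ T_{a,b,c,d}^{m-1}$ is well-defined as a composite of dominant lattice maps, and is itself dominant. Starting from the base case $T_{a,b,c,d}^0 = \Id$ (which is trivially dominant), this shows that all iterates $T_{a,b,c,d}^m$ exist, which is the definition of iterability.

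There is really no obstacle here; the content is already in Proposition \ref{prop_dom}, and the corollary is just the observation that dominance is preserved under composition of lattice maps. In fact, this is why the corollary is stated without a separate justification beyond citing the proposition. The proof itself should be a single sentence noting that the result follows from Proposition \ref{prop_dom} and the fact that dominant lattice maps are iterable.
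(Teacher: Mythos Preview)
Your proposal is correct and matches the paper's approach exactly: the corollary is stated without proof because it follows immediately from Proposition \ref{prop_dom} together with the proposition in Section \ref{sect_latt} that dominant lattice maps form a category (and in particular are iterable).
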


\begin{remark}
    The maps $T^{(n)}_{a,b,c,d}$ on closed $n$-gons need not be dominant, or even iterable.
\end{remark}

The proof of Proposition \ref{prop_dom} comprises the rest of this section. We break the proof into cases.

\begin{lemma} \label{lem_dom_outer}
    If $a<c<d<b$, then $T_{a,b,c,d}$ is dominant.
\end{lemma}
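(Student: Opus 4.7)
The plan is to reduce to closed polygons via Lemma \ref{lem_mod_criterion}, then establish dominance of the closed polygon map $T^{(n)}_{a,b,c,d}$ by a Jacobian computation at a rotationally symmetric polygon. For any $n > b - a$, the parameters $a, b, c, d$ remain distinct modulo $n$, so $T^{(n)}_{a,b,c,d} : (\PP^2)^n \dashrightarrow (\PP^2)^n$ is a well-defined rational self-map of a $2n$-dimensional variety. By Lemma \ref{lem_homog_jacobian}, it suffices to produce one polygon $v^\ast$ at which the Jacobian matrix of $T^{(n)}_{a,b,c,d}$ is nonsingular.

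The natural candidate for $v^\ast$ is a rotationally symmetric polygon: fix $A \in \PGL_3$ of order $n$ (say the projectivization of a rotation preserving a smooth conic), pick a generic $p_0 \in \PP^2$, and set $v^\ast_i = A^i p_0$. For all but finitely many $n$, $v^\ast$ lies in $\Dom T^{(n)}_{a,b,c,d}$, and $\PGL_3$-equivariance of the local rule guarantees $T^{(n)}_{a,b,c,d}(v^\ast) = (A^i q)_i$ for some $q \in \PP^2$. Thus $v^\ast$ is a fixed point of the twisted shift $\tau = A^{-1} \circ \sigma$, where $\sigma$ is the cyclic shift $(\sigma v)_i = v_{i+1}$, and $T^{(n)}_{a,b,c,d}$ commutes with $\tau$. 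Decomposing $T_{v^\ast}(\PP^2)^n$ into the $n$ two-dimensional $\tau$-eigenspaces and identifying source and target via the compensating projective transformation, the differential $d(T^{(n)}_{a,b,c,d})|_{v^\ast}$ becomes block-diagonal with $2 \times 2$ blocks $M_k$ whose entries are explicit Laurent polynomials in $\zeta^k$, where $\zeta$ is a primitive $n$-th root of unity; the Jacobian determinant is $\prod_k \det M_k$ up to a nonzero factor.

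The main technical obstacle is to verify that no $\det M_k$ vanishes. Viewing $\det M_k$ as the evaluation of a single Laurent polynomial $\Delta(z)$ in $z = \zeta^k$ with coefficients depending only on $a, b, c, d$, the nested hypothesis $a < c < d < b$ enters exactly here: it forces both offsets $b - a$ and $d - c$ to be positive and unequal (indeed $b-a > d-c > 0$, so this case is automatically truly skew), which prevents the four Fourier weights $z^a, z^b, z^c, z^d$ contributed by the intersection $\overline{v_{i+a}v_{i+b}} \cap \overline{v_{i+c}v_{i+d}}$ from conspiring to annihilate $\Delta$. Once $\Delta \not\equiv 0$, it has only finitely many roots in $\C^\times$, so for infinitely many $n$ the values $\zeta^0, \ldots, \zeta^{n-1}$ avoid all of them, giving $\det M_k \neq 0$ for every $k$ and hence dominance of $T^{(n)}_{a,b,c,d}$; Lemma \ref{lem_mod_criterion} then concludes. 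The explicit coordinate computation of $\Delta(z)$ — writing the intersection of two projective lines in a Fourier-compatible chart and extracting the Fourier-mode blocks — is the technical heart of the argument.
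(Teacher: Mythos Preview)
Your approach---reducing to closed polygons via Lemma~\ref{lem_mod_criterion} and then checking the Jacobian at a rotationally symmetric $n$-gon by Fourier-diagonalizing the differential---is genuinely different from the paper's argument and could in principle work, but as written it has a real gap. You never compute the Laurent polynomial $\Delta(z)$, and your justification that the nesting $a<c<d<b$ ``prevents the four Fourier weights $z^a,z^b,z^c,z^d$ \ldots\ from conspiring to annihilate $\Delta$'' is a slogan rather than an argument. You yourself call this computation ``the technical heart of the argument,'' so omitting it leaves the proof incomplete. There is also no conceptual indication of \emph{why} nesting should be the relevant hypothesis in the Fourier picture; the paper ultimately shows that every skew pentagram map is dominant (Proposition~\ref{prop_dom}), so one would expect $\Delta\not\equiv 0$ regardless of the ordering, and the nesting would then be doing no work in your scheme. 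A secondary loose end: Lemma~\ref{lem_homog_jacobian} is stated for $\PP^N$, not $(\PP^2)^n$, and your identification of tangent spaces at $v^\ast$ and $T(v^\ast)$ ``via the compensating projective transformation'' needs to be made precise before the block-diagonalization claim is justified.

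For comparison, the paper's proof is entirely elementary and works over any algebraically closed field, without passing to closed polygons at all. After shifting to $a=0$, it shows directly by induction on $n$ that each regional map $\varphi_{\{1,\ldots,n\}}:(\PP^2)^{\{1,\ldots,n+b\}}\dashrightarrow(\PP^2)^{\{1,\ldots,n\}}$ is dominant. Given a generic $(w_1,\ldots,w_n)$ in the image with preimage $v$, one produces a preimage of $(w_0,w_1,\ldots,w_n)$ by choosing a new vertex $u_0$ on the line $\overline{w_0 v_b}$ and adjusting only the vertex $u_c$. The nesting $0<c<d<b$ is used in a completely concrete way: it guarantees that among the outputs $w_1,\ldots,w_n$, modifying $u_c$ can disturb only $w_c$ (since $u_c$ appears only as the ``$c$'' or ``$d$'' argument in the local rule for indices $0$ and $c-d<0$), and $w_c$ is then restored by placing $u_c$ on a second line. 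No spectral computation is needed, and the role of the hypothesis is transparent.
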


\begin{proof}
     By composing with a shift, we reduce to the case $a=0$. By Definition \ref{def_dom}, it suffices to show that, for all $n$, the regional rational map
     $$\varphi_{\{1,\ldots,n\}} : (\PP^2)^{\{1,\ldots,n+b\}} \dashrightarrow (\PP^2)^{\{1,\ldots,n\} },$$
     $$(v_1,\ldots, v_{n+b}) \mapsto (\varphi(v_i, v_{i+b}, v_{i+c}, v_{i+d}))_{1 \leq i \leq n}$$
     is dominant. 
     
     We prove this by induction.
     Dominance of $\varphi_{\{1\}}$ is immediate from dominance of the local rule $\varphi$. Now assuming that $\varphi_{\{1,\ldots,n\}}$ is dominant, we will show that
     $\varphi_{\{0,1,\ldots,n\}}$ is dominant.

      If $w= (w_1,\ldots, w_n)$ is general, it is the image of a configuration $v \in (\PP^2)^{\{1,\ldots,n+b\}}$ by the inductive hypothesis.
      We claim that there is a Zariski dense subset $W \subseteq \PP^2$ depending on $w$ such that, for all $w_0 \in W$, the configuration $(w_0,w_1,\ldots, w_n)$ is in the image of $\varphi_{\{0,1, \ldots, n+b\}}$. So the image of $\varphi_{\{0,1, \ldots, n+b\}}$ is Zariski dense in $(\PP^2)^{\{0,1,\ldots,n\} }$.
      
      We achieve this by explicitly constructing an input $(u_0, \ldots, u_{n+b})$ with image $(w_0,\ldots, w_n)$, as follows. By our generality assumptions, the lines 
      $\ell_1 = \ovl{w_0 v_b}$, $\ell_2 = \ovl{w_0 v_d}$,
      exist.
      Suppose that $u \in (\PP^2)^{\{1,\ldots,n+b\}}$ agrees with $v$, except possibly in the $c$-th factor. Then by the assumption that $0<c<d<b$, we have that $\varphi_{\{1,\ldots,n\}}(u)$ agrees with $\varphi_{\{1,\ldots,n\}}(v)$ except possibly in the $c$-th factor. 
      
      Now we split into cases depending on the size of $n$. If $n < c$, choosing $u_0 \in \ell_1 \smallsetminus v_b$ and $u_c \in \ell_2 \smallsetminus v_d$ gives
      $$ \varphi_{\{0,1,\ldots,n\}}(u_0, \ldots, u_{n+b}) =(w_0,w_1,\ldots, w_n).$$
      
      If $n \geq c$, then the line $\ell_3 = \ovl{w_c v_{d+c}}$ exists because $v$ is in general linear position. If $u_0 \in \ell_1 \smallsetminus v_b$ and $u_c = \ell_2 \cap \ell_3$, then 
      \[
      \varphi_{\{0,1,\ldots,n\}}(u_0,u_1,\ldots,u_{n+b}) = (w_0,w_1,\ldots,w_n).
      \]
\end{proof}

We could argue similarly to the previous lemma for the other possible orderings of $a,b,c,d$, but the arguments become more delicate. Instead, we cover the other cases using a lemma of a different flavor, which will also be used later in our study of $8$-gons. The inspiration for the lemma is that the identity map is the (technically excluded) degenerate case of $T_{a,b,c,d}$ where $a = d = 0$. We show that, for some values of $(a,b,c,d)$, the differential of $T_{a,b,c,d}$ permutes a basis of tangent vectors at certain closed polygons.

\begin{lemma} \label{lem_dom_quadrilateral_trick}
Let $T_{0,b,c,d}$ be a skew pentagram map, let $n > 0$ be a multiple of $d$, and suppose that $a,b,c,d$ are distinct mod $n$.
Then $T_{0,b,c,d}^{(n)} \colon (\PP^2)^n \dashrightarrow (\PP^2)^n$ is dominant.
\end{lemma}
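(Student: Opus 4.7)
The plan is to exhibit a closed $n$-gon $v$ at which $T^{(n)}_{0,b,c,d}$ is defined and whose differential is a linear isomorphism on tangent spaces; since source and target are smooth irreducible varieties of the same dimension, dominance of $T^{(n)}_{0,b,c,d}$ follows.

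The candidate basepoint is a generic closed $n$-gon $v$ of period $d$, meaning $v_i = v_{i+d}$ for all $i \in \Z/n\Z$. Such polygons form an irreducible subvariety since $d \mid n$, parametrized by the free choice of $v_0, \ldots, v_{d-1} \in \PP^2$. In every intended application of the lemma the parameter $d$ exceeds $|b|$ and $|c|$, so $b$ and $c$ are nonzero mod $d$ and, for generic choice of the $v_j$, every triple $v_i, v_{i+b}, v_{i+c}$ is in general linear position. Both diagonals $\overline{v_i v_{i+b}}$ and $\overline{v_{i+c} v_{i+d}} = \overline{v_{i+c} v_i}$ are then distinct lines meeting at the common point $v_i$. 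So $v \in \Dom T^{(n)}_{0,b,c,d}$ and $T^{(n)}_{0,b,c,d}(v) = v$: the polygon $v$ is a fixed point.

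Next I compute the differential at $v$. Working in an affine chart centered at $v_i$, periodicity ensures $v_{i+b} = v_{(i+d)+b}$ and $v_{i+c} = v_{(i+d)+c}$, so $\{v_{i+b}, v_{i+c}\}$ provides a common basis of the tangent spaces at $v_i$ and $v_{i+d}$. Expanding $w_i = \alpha_i v_{i+b} + \beta_i v_{i+c}$ gives coordinates $(\alpha_i, \beta_i)_{0 \leq i < n}$ on $T_v (\PP^2)^n$. I will expand the intersection $\overline{\tilde v_i \tilde v_{i+b}} \cap \overline{\tilde v_{i+c} \tilde v_{i+d}}$ for $\tilde v = v + \epsilon w$ in powers of $\epsilon$. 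The order-zero condition forces the intersection to equal $v_i$, while the order-one condition collapses to $t_1 v_{i+b} + s_1 v_{i+c} = w_{i+d} - w_i$, from which the first-order perturbation of the output is $w_i' = w_i + t_1 v_{i+b}$. Substituting the basis expansions and simplifying yields $(\alpha_i', \beta_i') = (\alpha_{i+d}, \beta_i)$. Thus the differential is the cyclic shift by $d$ on the $\alpha$-coordinates and the identity on the $\beta$-coordinates, a permutation of a basis of $2n$ tangent vectors, hence invertible.

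The main obstacle is the perturbation calculation itself. The coincidence $v_i = v_{i+d}$ at the basepoint, which is what makes $v$ a fixed point, also means that both lines collapse to the single point $v_i$ at order zero, so one must extract their intersection from the first-order terms. A priori the output could depend on all of $w_i, w_{i+b}, w_{i+c}, w_{i+d}$, and the fortunate cancellation leaving only $w_i$ and $w_{i+d}$ is what produces the permutation structure responsible for invertibility.
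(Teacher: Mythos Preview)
Your proof is correct and follows essentially the same strategy as the paper: exhibit a $d$-periodic polygon as a fixed point of $T^{(n)}_{0,b,c,d}$, then compute the differential there and show it permutes a basis of tangent vectors. The only stylistic difference is that you carry out a direct first-order expansion of the intersection point, whereas the paper perturbs one vertex at a time in a single basis direction and tracks the output geometrically; these are two presentations of the same linearization, and both arrive at the conclusion that the differential acts as the identity on one family of basis vectors and as a cyclic shift by $d$ on the other. Your observation that $w_{i+b}$ and $w_{i+c}$ drop out at first order (because they enter with coefficient $\epsilon t$ or $\epsilon s$, which is $O(\epsilon^2)$) is exactly the mechanism behind the paper's geometric cancellations. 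Your aside that the argument tacitly needs $b,c$ distinct and nonzero mod $d$ is accurate; the paper's proof makes the same implicit assumption, and both are justified by the way the lemma is actually applied (after normalizing to $0<b<d$ and $0<c<d$).
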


\begin{proof}
     By Observation \ref{obs_Z_conj}, we may conjugate to assume $0 < d$. It suffices to show that the differential of $T^{(n)}_{0,b,c,d}$ is injective on the tangent space at some $p \in \Dom T^{(n)}_{0,b,c,d}$. Let $p = (p_1,\ldots,p_{d})$ be a $d$-gon in general linear position. Consider the periodic extension of $p$ to $(\PP^2)^\Z$. By assumption on $d$, this polygon $p$ is a closed $n$-gon.
     
      We see by direct check that $p \in \Dom T^{(n)}_{0,b,c,d}$, since for all $i \in \Z/n\Z$, the diagonals $\ovl{p_{i} p_{i+b}}$ and $\ovl{p_{i+c} p_{i+d}}$ are defined and distinct, meeting at $p_{i}=p_{i+d}$.
     We may define a basis for the tangent space of $(\PP^2)^n$ at $p$ by choosing, for each $i$, a basis for the tangent space of the plane at $p_i$. Up to change of coordinates, we may assume that all the points $p_i$ are affine. Then we choose the basis $\alpha_i = \overrightarrow{p_i p_{i+c}}$, $\beta_i = \overrightarrow{p_i p_{i+b}}$.
     
     We claim that the differential of $T_{0,b,c,d}$ permutes the $\alpha_i$ and permutes the $\beta_i$. To see the first claim, suppose that $v$ is a general $n$-gon that agrees with $p$ except at the $i$-th vertex, such that $v_i \in \ovl{p_i p_{i+c}}$. Then $v' \colonequals T^{(n)}_{0,b,c,d}(v)$ agrees with $T_{0,b,c,d}^{(n)}(p)$ except possibly at vertices $i,i-b,i-c,i-d$. Checking these vertices,
     
\begin{align*}
    v'_{i} = v'_{i-a} &= \ovl{v_{i} v_{i+b-a}} \cap \ovl{v_{i+c-a} v_{i+d-a}}\\
     &= \ovl{v_{i} p_{i+b}} \cap \ovl{p_{i+c} p_{i}}\\
    &= v_i,\\
    \\
     v'_{i-b} &= \ovl{v_{i+a-b} v_{i}} \cap \ovl{v_{i+c-b} v_{i+d-b}}\\
     &= \ovl{p_{i-b} v_{i}} \cap \ovl{p_{i+c-b} p_{i-b}}\\
    &= p_{i-b},
    \end{align*}
    \begin{align*}
     v'_{i-c} &= \ovl{v_{i+a-c} v_{i+b-c}} \cap \ovl{v_{i} v_{i+d-c}}\\
        &= \ovl{p_{i-c} p_{i+b-c}} \cap \ovl{v_{i} p_{i-c}}\\
         &= p_{i-c},\\
             \\
    v'_{i-d} &= \ovl{v_{i+a-d} v_{i+b-d}} \cap \ovl{v_{i+c-d} v_{i}} \\
         &= \ovl{p_{i} p_{i+b}} \cap \ovl{p_{i+c} v_i} \\
         &= p_i.
\end{align*}

Since the output vertices except $v'_{i}$ are fixed and the function $v_i \mapsto v'_{i}$ is the identity, the differential of $T^{(n)}_{a,b,c,d}$ takes $\alpha_i$ to $\alpha_{i}$. A similar argument shows that the differential of $T^{(n)}_{a,b,c,d}$ takes $\beta_i$ to $\beta_{i-d}$. 
\end{proof}

\begin{proof}[Proof of Proposition \ref{prop_dom}]
Up to shifts and conjugacy, every skew pentagram map is of the form $T_{0,b,c,d}$ where $0<b$ and $0<c<d$. If $d<b$, then Lemma \ref{lem_dom_outer} applies. Otherwise, we have $b<d$, and any multiple $n > 0$ of $d$ satisfies the hypotheses of Lemma \ref{lem_dom_quadrilateral_trick}. Since an infinite sequence of induced lattice maps of increasing size are dominant, the map $T_{0,b,c,d}$ is also dominant by Lemma \ref{lem_mod_criterion}.
\end{proof}

\section{Upper bound}

In this short section, we prove a quick upper bound on the dynamical degree of a skew pentagram map using the first iterate estimate.

\begin{lemma} \label{lem_deg_phi}
    Every skew pentagram map satisfies $\deg T_{a,b,c,d} = 4$.
\end{lemma}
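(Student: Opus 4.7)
The plan is to write out the local rule $\varphi$ in cross-product form, read off its degree directly from the resulting formula, and verify there are no common polynomial factors by a codimension count on the base locus.

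Fix homogeneous coordinates on each copy of $\PP^2$ and view $v_a, v_b, v_c, v_d$ as nonzero column vectors in $\bk^3$. Under the standard correspondence between lines in $\PP^2$ and points of the dual $\PP^2$, the line $\ovl{v_a v_b}$ is represented by the cross product $v_a \times v_b$, and dually the intersection $\ovl{v_a v_b} \cap \ovl{v_c v_d}$ is represented by $(v_a \times v_b) \times (v_c \times v_d)$. The triple cross product identity gives
\[
\varphi(v_a, v_b, v_c, v_d) \;=\; (v_a \times v_b) \times (v_c \times v_d) \;=\; [v_a, v_b, v_d]\, v_c \;-\; [v_a, v_b, v_c]\, v_d,
\]
where $[\cdot,\cdot,\cdot]$ denotes the $3\times 3$ determinant. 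Each of the three scalar components of this expression is multihomogeneous of multidegree $(1,1,1,1)$ in the four blocks of vertex coordinates, hence of total degree $4$.

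It remains to verify that these three components share no common factor of positive degree in $\bk[X]$, so that Definition \ref{def_deg_latt} applies. Equivalently, the common zero locus $Z \subset (\PP^2)^4$ of the three components must have codimension at least $2$. From the identity above, a point $(v_a, v_b, v_c, v_d)$ lies in $Z$ iff $[v_a, v_b, v_d]\, v_c = [v_a, v_b, v_c]\, v_d$ in $\bk^3$. I would split into cases: if $v_c = v_d$ in $\PP^2$ (codimension $2$) or $v_a = v_b$ in $\PP^2$ (codimension $2$), the identity is automatic; otherwise both coefficients $[v_a, v_b, v_c]$ and $[v_a, v_b, v_d]$ must vanish, which forces $v_c, v_d \in \ovl{v_a v_b}$, two independent codimension-$1$ conditions. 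Thus $Z$ is a union of codimension-$\geq 2$ subvarieties, so no common factor can exist, and $\deg T_{a,b,c,d} = \deg \varphi = 4$. The only substantive step is the codimension bound on $Z$, and it is essentially immediate from the triple product identity; the remainder is bookkeeping.
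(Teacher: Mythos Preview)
Your proof is correct and follows the same outline as the paper's: express the local rule $\varphi$ via iterated cross products to read off multidegree $(1,1,1,1)$, then check that the three components share no common factor. The paper handles the common-factor check differently, arguing from the symmetries of $\varphi$ in its four arguments that any common factor would have to be linear in \emph{each} block of variables, hence of full degree $4$, forcing $\varphi$ to be constant. Your route via the triple product identity $(v_a \times v_b)\times(v_c\times v_d)=[v_a,v_b,v_d]\,v_c-[v_a,v_b,v_c]\,v_d$ and a codimension count on the base locus is a clean alternative: it is more explicit, sidesteps any subtlety about how the symmetries act on possible factors in a UFD, and the technique (ruling out a common factor by showing the simultaneous vanishing locus has codimension $\ge 2$) transfers readily to local rules where no convenient symmetry is available.
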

\begin{proof}
By Definition \ref{def_deg_latt}, we wish to show that $\deg \varphi = 4$, where $\varphi$ is the local rule for $T_{a,b,c,d}$. This claim is independent of the particular values of the parameters $a,b,c,d$, so to ease notation, take $(a,b,c,d)=(1,2,3,4)$.

Endow $\PP^2$ with the homogeneous coordinates $[X:Y:Z]$.
Let $(\PP^2)^\vee$ denote the dual projective plane of $\PP^2$. The points of $(\PP^2)^\vee$ are nontrivial homogeneous $1$-forms in $X,Y,Z$ up to scale, which we identify with the lines in $\PP^2$ where they vanish. Choose coordinates on $(\PP^2)^\vee$ so that the point $[\alpha:\beta:\gamma] \in (\PP^2)^\vee$ represents $\alpha X + \beta Y + \gamma Z$. 

Let
$$\ell: \PP^2 \times \PP^2 \dashrightarrow (\PP^2)^\vee,$$
$$(v,w) \mapsto \ovl{vw},$$
$$\wp : (\PP^2)^\vee \times (\PP^2)^\vee \dashrightarrow \PP^2,$$
$$(L_1, L_2) \mapsto L_1 \cap L_2.$$
In homogeneous coordinates, these two maps have the same formula:
\begin{equation} \label{eq_two_point_line}
\ell([X_1 : Y_1 : Z_1], [X_2 : Y_2 : Z_2]) = 
\begin{bmatrix}
    \begin{vmatrix}
        Y_1 & Y_2 \\
        Z_1 & Z_2 \\
    \end{vmatrix} :
    \begin{vmatrix}
        Z_1 & Z_2 \\
        X_1 & X_2 \\
    \end{vmatrix} :
    \begin{vmatrix}
        X_1 & X_2 \\
        Y_1 & Y_2 \\
    \end{vmatrix}
\end{bmatrix},
\end{equation}
\begin{equation} \label{eq_two_line_point}
\wp([\alpha_1 : \beta_1 : \gamma_1], [\alpha_2 : \beta_2 : \gamma_2]) = 
\begin{bmatrix}
    \begin{vmatrix}
        \beta_1 & \beta_2 \\
        \gamma_1 & \gamma_2 \\
    \end{vmatrix} :
    \begin{vmatrix}
        \gamma_1 & \gamma_2 \\
        \alpha_1 & \alpha_2 \\
    \end{vmatrix} :
    \begin{vmatrix}
        \alpha_1 & \alpha_2 \\
        \beta_1 & \beta_2 \\
    \end{vmatrix}
\end{bmatrix}.
\end{equation}
Then $\varphi(v_1,v_2,v_3,v_4)=\rho(\ell(v_1,v_2),\ell(v_3,v_4)).$ The components of the formula for $\varphi$ so obtained are homogeneous of degree $1$ separately in each set of coordinates $X_i,Y_i,Z_i$ for each $i \in \{1,2,3,4\}$. If there were a common factor $h$ of positive degree among the components, then, by symmetry, $h$ would be at least linear in each set of variables, so $\varphi$ would be a constant map, which it is not.
\end{proof}

\begin{prop} \label{prop_upper}
     Every skew pentagram map satisfies  $$\lambda_1(T_{a,b,c,d}) \leq 4.$$
\end{prop}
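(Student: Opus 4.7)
The statement is an immediate corollary of two results already in place, so the plan is essentially a one-line assembly rather than a genuine argument. I would simply combine Lemma \ref{lem_first_iterate} (the submultiplicativity-based first-iterate estimate $\lambda_1(T) \leq \deg T$, valid for any iterable lattice map) with Lemma \ref{lem_deg_phi} (which computes $\deg T_{a,b,c,d} = 4$ via the Cramer-style formulas \eqref{eq_two_point_line} and \eqref{eq_two_line_point} for $\ell$ and $\wp$). Iterability of $T_{a,b,c,d}$ is guaranteed by Proposition \ref{prop_dom}.

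Concretely, the proof would read: since $T_{a,b,c,d}$ is dominant (Proposition \ref{prop_dom}), it is iterable, so the degree sequence $\deg T_{a,b,c,d}^m$ is well-defined, submultiplicative (by the first observation in the proof of Lemma \ref{lem_first_iterate}), and Fekete's lemma gives convergence of $(\deg T_{a,b,c,d}^m)^{1/m}$ to $\lambda_1(T_{a,b,c,d})$ with
\[
\lambda_1(T_{a,b,c,d}) \leq \deg T_{a,b,c,d}.
\]
Invoking Lemma \ref{lem_deg_phi}, the right-hand side equals $4$, finishing the proof.

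There is no real obstacle here, since the content has been pushed entirely into the two preceding lemmas. The only remark worth making is that one should not be tempted to try to improve this bound by an ad hoc common-factor cancellation argument at the first iterate: as is standard in the dynamical degree theory, the lower bound $\lambda_1 \geq 4$ in the truly skew examples will have to come from the more delicate subsystem analysis of Section 7 (``entropy sandwich'' via $\bar{T}_\RS$ on the rotationally symmetric octagon moduli space), not from any manipulation at the level of a single iterate.
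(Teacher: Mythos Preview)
Your proposal is correct and follows essentially the same approach as the paper: invoke dominance (Proposition~\ref{prop_dom}) to ensure $\lambda_1$ is defined, apply the first-iterate estimate from Lemma~\ref{lem_first_iterate}, and plug in $\deg T_{a,b,c,d}=4$ from Lemma~\ref{lem_deg_phi}. The closing remark about not improving the bound and deferring the lower bound to Section~7 is extraneous to the proof but accurate.
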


\begin{proof}
    By Proposition \ref{prop_dom}, the map $T_{a,b,c,d}$ is dominant, so $\lambda_1(T_{a,b,c,d})$ is defined.
    The first iterate estimate \eqref{eq_dd_latt_first_iterate} states that $\lambda_1(T_{a,b,c,d}) \leq \deg T_{a,b,c,d}$. By definition, we have $\deg T_{a,b,c,d} = \deg \varphi$, and Lemma \ref{lem_deg_phi} shows that $\deg \varphi = 4$.
\end{proof}

\section{Equal-length maps}
Recall that a skew pentagram map $T_{a,b,c,d}$ in the form $a < b$ and $c < d$ is called equal-length if $b-a = d-c$. Up to shifts and reflections, these maps comprise a $2$-parameter family $T_{0,b,c,b+c}$. Equal-length maps are known to have a cluster algebra interpretation \cite{MR3483131}, and they may be lifted to so-called $J$-corrugated maps in $\PP^3$, which admit Lax representations with spectral parameter from scaling symmetry \cite{izosimov}. The special case $T_{0,b,1,b+1}$ of intersecting consecutive diagonals has been studied extensively under the name \emph{deep-diagonal maps} \cite{MR4865926,MR3438382,MR4761767,textbookcasepentagramrigidity,MR3534837, MR3438382}.
Our point of view is that the general case of equal-length maps should be expected to behave just like  $T_{0,b,1,b+1}$. Indeed, on closed $n$-gons, if $\gcd(n,c) = 1$, then a relabeling of vertices conjugates $T_{0,b,c,b+c}^{(n)}$ to $T_{0,b',1,b'+1}^{(n)}$ for some $b'$. On infinite polygons, it is not possible to make this reduction, but the dynamics appear to be essentially the same.

In this section, we extend some integrability properties of deep-diagonal maps to equal-length maps, working in the language of lattice maps and dynamical degrees. These properties provide a counterpoint to our study of truly skew maps in Section \ref{sect_truly}.

\subsection{Invertibility}
Equal-length maps are invertible in the category of lattice maps, and their inverses are again equal-length maps. 

\begin{prop} \label{prop_inv_Z}
    Let $T_{a,b,c,d}$ be an equal-length map, written in the form $a<b$ and $c<d$. Then
\[
T_{a,b,c,d}^{-1} = T_{-d,-b,-c,-a} .
\]
\end{prop}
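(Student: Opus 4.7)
The proof is essentially a geometric duality argument that uses the equal-length condition to find two distinct lines through each original vertex inside the image polygon. Let $v' = T_{a,b,c,d}(v)$. By definition of the local rule, for each $i \in \Z$, the point $v'_i$ lies on both diagonals $\ovl{v_{i+a}v_{i+b}}$ and $\ovl{v_{i+c}v_{i+d}}$. Fixing an index $j$ and reading off which $v'$-vertices pass through lines containing $v_j$, we find that $v_j$ appears in four diagonals: $v'_{j-a}\in \ovl{v_j v_{j+b-a}}$, $v'_{j-b}\in \ovl{v_{j-(b-a)}v_j}$, $v'_{j-c}\in \ovl{v_j v_{j+d-c}}$, and $v'_{j-d}\in \ovl{v_{j-(d-c)}v_j}$.

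The equal-length condition $b-a=d-c=:\ell$ causes two pairs of these diagonals to coincide. Set $L^+(j) = \ovl{v_j v_{j+\ell}}$ and $L^-(j)=\ovl{v_{j-\ell}v_j}$. The four relations above collapse to
\[ v'_{j-a},\,v'_{j-c}\in L^+(j), \qquad v'_{j-b},\,v'_{j-d}\in L^-(j). \]
For a generic polygon $v$, the lines $L^+(j)$ and $L^-(j)$ are distinct and meet only at $v_j$. Therefore $v_j$ is recovered from $v'$ by the formula
\[ v_j = \ovl{v'_{j-a}\, v'_{j-c}}\cap \ovl{v'_{j-b}\,v'_{j-d}},\]
which is precisely the local rule of the skew pentagram map with defining diagonals $\{-a,-c\}$ and $\{-b,-d\}$. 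Since a skew pentagram map depends only on its unordered pair of unordered diagonals, this map equals $T_{-d,-b,-c,-a}$. This proves $T_{-d,-b,-c,-a}\circ T_{a,b,c,d}=\Id$ on a dense open set.

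For the reverse composition, I would apply the same derivation to $T_{-d,-b,-c,-a}$ in the role of $T_{a,b,c,d}$. Writing this map as $T_{\alpha,\beta,\gamma,\delta}$ with $(\alpha,\beta,\gamma,\delta)=(-d,-b,-c,-a)$, the equal-length condition $\beta-\alpha=\delta-\gamma$ becomes $d-b=c-a$, which is equivalent to $b-a=d-c$ and so holds. The formula just established, applied to this map, recovers defining diagonals $\{-\alpha,-\gamma\}=\{d,c\}$ and $\{-\beta,-\delta\}=\{b,a\}$, yielding back $T_{a,b,c,d}$.

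To finish in the lattice-map category, I would note that both $T_{a,b,c,d}$ and $T_{-d,-b,-c,-a}$ are dominant by Proposition~\ref{prop_dom}, hence composable, and that each composite is the identity on a Zariski-dense subset of $(\PP^2)^\Z$ by the preceding computation; invertibility then follows from the definition of inverses for lattice maps. The main subtlety is the generic non-degeneracy step: the claim that $L^+(j)\neq L^-(j)$ and $v'_{j-a}\neq v'_{j-c}$ (so $\ovl{v'_{j-a}v'_{j-c}}$ is defined) must hold on a dense open set, which is a standard generic-position check requiring no further substantial work.
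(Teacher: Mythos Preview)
Your proof is correct and follows essentially the same approach as the paper: both arguments observe that for each $j$ the four image vertices $v'_{j-a},v'_{j-b},v'_{j-c},v'_{j-d}$ lie on the two diagonals $\ovl{v_j v_{j+\ell}}$ and $\ovl{v_{j-\ell} v_j}$ (using $b-a=d-c=\ell$), whence $v_j$ is recovered as their intersection, and both invoke dominance to justify composability and handle the reverse composition by symmetry. The only cosmetic difference is that the paper phrases the computation at the level of the local rule $\varphi'\circ\varphi_{S'}$ on the finite product $(\PP^2)^{S+S'}$, while you work directly with an arbitrary index $j$ in $(\PP^2)^\Z$.
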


\begin{proof}
Let $T=T_{a,b,c,d}$, $T' = T_{-d,-b,-c,-a}$, $S = \{a,b,c,d\}$, $S' = \{-d,-b,-c,-a\}$, and let $\varphi$, $\varphi'$ be the local rules for $T$ and $T'$ on the neighborhoods $S$ and $S'$ respectively. We wish to show the following:
\begin{enumerate}
    \item $\varphi' \circ \phi_{S'}$ is defined as a rational map, and is equivalent to the identity map $\PP^2 \to \PP^2$ as a local rule;
    \item $\varphi \circ \varphi'_{S}$ is defined as a rational map, and is equivalent to the identity map $\PP^2 \to \PP^2$ as a local rule.
\end{enumerate}
By symmetry, it suffices to prove just (1). Since $T$ is dominant, so is $\varphi_{S'}$, so $\varphi' \circ \varphi_{S'}$ is defined as a rational map; and further, if $v \in (\PP^2)^{S + S'}$ is generic, then the four points of $\varphi_{S'} v$ are in general position, so the lines 
$$\ell_1 = \ovl{(\varphi_{S'} v)_{-a} ((\varphi_{S'} v)_{-c}}, \ell_2 = \ovl{((\varphi_{S'} v)_{-b} ((\varphi_{S'} v)_{-d}}$$
are defined with unique intersection point $\varphi'(\varphi_{S'} (v))$. We claim that $v_0 \in \ell_1 \cap \ell_2$, so $v_0 = \varphi'(\varphi_{S'} (v))$. To see this, observe that for each $s' \in S'$, we have
$$(\varphi_{S'} v)_{s'} \in \ovl{v_{s' + a} v_{s' + b}} \cap \ovl{v_{s' + c} v_{s' + d}}.$$
So
\begin{align*}
    &(\varphi_{S'} v)_{- a} \in \ovl{v_{0} v_{b-a}}, &(\varphi_{S'} v)_{-c} \in \ovl{v_{0} v_{d-c}},\\
    &(\varphi_{S'} v)_{- b} \in \ovl{v_{a-b} v_0}, &(\varphi_{S'} v)_{-d} \in \ovl{v_{c-d} v_0}.
\end{align*}
By the equal-length assumption, we have $b-a =  d - c$, so two distinct points of $\ell_1$ lie in $\ovl{v_0 v_{b-a}}$, and two distinct points of $\ell_2$ lie in $\ovl{v_{a-b} v_{0}}$.
So $v_0 \in \ell_1 \cap \ell_2$.
\end{proof}

\begin{remark}
Khesin-Soloviev generalized this invertibility property to $T_{I,J}$-maps in any dimension \cite[Theorem 1.4]{MR3438382} and universal pentagram maps \cite[Theorem 4.4]{MR3282373}, assuming dominance. The equal-length maps we work with here are precisely $T_{I,J}$-maps in the plane.
\end{remark}

\begin{remark}
In contrast, we speculate that all truly skew maps $T_{a,b,c,d}$ are non-invertible in the category of lattice maps, but proving this seems tricky due to indeterminacy. We do not have a clear notion of ``generic topological degree'' for lattice maps. We can, however, show that there exist non-invertible truly skew maps $T^{(n)}_{a,b,c,d}$ on closed $n$-gons, as these are rational maps  \ref{lem_basic} (\ref{it_lem_basic_top_deg}). This basic difference between the equal-length and truly skew cases is illustrated in Figure \ref{fig_invertibility}.
\end{remark}

\begin{figure}[b]
    \centering
    \includegraphics[width=0.7\linewidth]{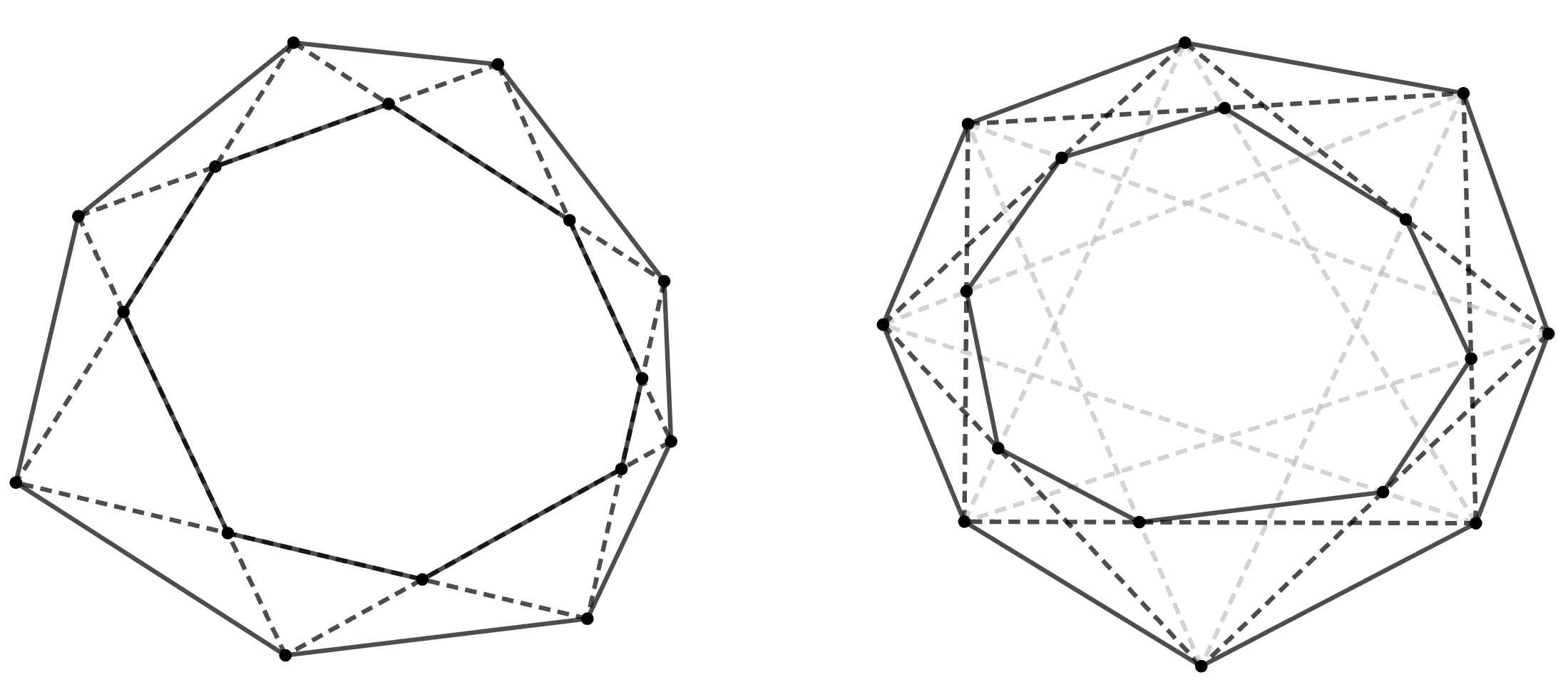}
    \caption{It is usually possible to reconstruct the input of $T_{0,2,1,3}$ from its output (left), but this is not the case for $T_{0,2,1,4}$ (right).}
    \label{fig_invertibility}
\end{figure}

\subsection{Degree growth}

Let $e_1,e_2,e_3$ be the standard basis vectors in $\Z^3$. 
The \emph{tetrahedral-octahedral lattice} is defined by
\[
\shL = \{ (i,j,k) \in \Z^3 : i + j + k \equiv 0 \mod 2\}.
\]
Let $\shL' = \Z^3 \smallsetminus \shL$. It is a copy of $\shL$, translated by any of the standard basis vectors.

\begin{defn}[\cite{MR4646096}]
    A map $w: \shL \to \PP^1_{\bk}$, written $r \mapsto w_r$, satisfies the \emph{Schwarzian octahedron recurrence}, or \emph{dSKP recurrence}, if in some affine chart of $\PP^1_\bk$ containing all the values of $w$, for all $r \in \shL'$, we have
\begin{equation} \label{eq_dskp}
  \frac{(w_{r-e_3}-w_{r+e_2})(w_{r-e_1}-w_{r+e_3})(w_{r-e_2}-w_{r+e_1})}{
  (w_{r+e_2}-w_{r-e_1})(w_{r+e_3}-w_{r-e_2})(w_{r+e_1}-w_{r-e_3})} = -1.
\end{equation}
\end{defn}
The ratio in \eqref{eq_dskp} is independent of the choice of the affine chart and is preserved by the diagonal action of $\PGL_2$.

\begin{example} \label{ex_dskp}
    For general $\mu_1, \mu_2, \mu_3 \in \bk$, the function
    $w_{(i,j,k)} = \mu_1 i + \mu_2 j + \mu_3 k$
    satisfies the dSKP recurrence.
\end{example}

Now we specialize to the base field $\C$. For each $k_0\in \Z$, let $\mc{I}_{k_0}$ be the subset of $\shL$ consisting of all $(i,j,k) \in \shL$ for which $k = k_0$. Treat the $w_s$ for all $s \in \mc{I}_0 \cup \mc{I}_1$ as formal indeterminates. Recursively applying \eqref{eq_dskp}, for each $r = (i,j,k) \in \shL$ with $k \geq 0$, there exists a rational function expressing $w_r$ in terms of these indeterminates, so
$$w_r = g_r (w_s : s \in \mc{I}_0 \cup \mc{I}_1).$$
Further, the degree $\delta_k \colonequals \deg g_{(i,j,k)}$ depends only on $k$.

Affolter-de Tili\`ere-Melotti studied the growth of the sequence $\delta_k$ and proved the following result \cite{MR4646096}.

\begin{thm} \label{thm_dskp}
    The sequence $\delta_k$ has at most polynomial growth in $k$.
\end{thm}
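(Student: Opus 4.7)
The plan is to control the degree sequence $\delta_k$ by finding a ``tau-function'' factorization $w_r = \sigma_r/\tau_r$ in which the polynomials $\sigma_r,\tau_r$ in the initial indeterminates on $\mc{I}_0 \cup \mc{I}_1$ have polynomial (rather than exponential) degree in $k$. Solving \eqref{eq_dskp} for a single new value $w_{r+e_3}$ yields a M\"obius transformation in each of the six surrounding variables, so a naive estimate gives only $\delta_k \leq C^k$; the content of the theorem is that systematic common factors cancel at every step of the recursion.

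I would proceed as follows. First, use the $\PGL_2$-equivariance of \eqref{eq_dskp} to normalize three boundary values to $0,1,\infty$, so that $\delta_k$ is literally the degree of $w_r$ viewed as a rational function of the remaining initial indeterminates. Second, clear denominators in \eqref{eq_dskp} to obtain a Hirota-type bilinear relation whose unknowns are the factors $\sigma_r$ and $\tau_r$; Example \ref{ex_dskp} is reassuring here, since it exhibits a very simple additive solution to the dSKP, hinting at an underlying linearizable structure. Third, predict by analogy with Speyer's perfect-matching formula for the multiplicative octahedron recurrence that $\sigma_r,\tau_r$ admit a combinatorial expansion indexed by (triple-)dimer or domino configurations on a graph whose vertex count grows polynomially in $k$, which would force $\deg \sigma_r,\deg \tau_r = O(k^2)$. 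Fourth, verify this either by induction on $k$ against a Kuo-style condensation identity that is a polynomial avatar of \eqref{eq_dskp}, or by invoking the Laurent phenomenon for an appropriate cluster algebra of dSKP type.

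The hard part is the fourth step: identifying the correct combinatorial model and proving the Kuo-type identity that matches the denominator-cleared form of \eqref{eq_dskp}. Without that combinatorial input one only gets singularity confinement at a single step, which does not by itself rule out exponential degree growth; one has to show globally that the cancellations compound additively rather than multiplicatively in $k$. Once the tau-function formula is in place, the polynomial bound $\delta_k = O(k^2)$ follows immediately by counting the number of initial variables that can appear per term in the expansion, completing the proof.
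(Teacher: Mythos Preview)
The paper does not prove this result itself; it simply cites \cite[Theorem~1.1]{MR4646096} (Affolter--de Tili\`ere--Melotti), specializing to the lattice height function $h(i,j)\equiv i+j\pmod 2$, and remarks that those authors give explicit solutions as ratios of partition functions. That reference carries out exactly the program you sketch: it produces a formula $w_r=\sigma_r/\tau_r$ where $\sigma_r,\tau_r$ are partition functions of a combinatorial (dimer-type) model on a planar graph whose size grows polynomially in $k$, and the degree bound is then read off by counting. So your strategy is correct and coincides with the approach in the cited source.

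That said, what you have written is a plan, not a proof. The item you yourself flag as ``the hard part''---identifying the correct combinatorial model and establishing the Kuo-type condensation identity that matches the denominator-cleared form of \eqref{eq_dskp}---is precisely the content of the theorem being cited, and you have not carried it out. You correctly observe that without this step one has only single-step singularity confinement, which does not preclude exponential growth. As it stands, then, the proposal has a genuine gap at step four: the key combinatorial identity is asserted by analogy rather than proved. Either supply that argument in full (a substantial piece of work in its own right) or, as the paper does, invoke \cite{MR4646096} directly.
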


\begin{proof}
    Take lattice height function $h(i,j) = i + j \mod 2$ in \cite[Theorem 1.1]{MR4646096}. See also the immediately following discussion. Briefly, this initial data for the dSKP recurrence gives rise to explicit solutions in terms of partition functions. We note that this notion of height is unrelated to Weil height.
\end{proof}

It turns out that equal-length pentagram maps carry out the same computations as the dSKP recurrence, but rearranged. This insight is again due to Affolter-de Tili\`ere-Melotti, who studied corrugated pentagram maps, a cosmetically different variation \cite[Section 9.2 and 9.4]{MR4865926}. We follow their idea closely.

The link between the dSKP recurrence and pentagram maps comes from the following variant of Menelaus' Theorem. We state it for $x$-coordinates, but of course it is also true replacing $x$ with $y$ throughout.

\begin{lemma} \label{lem_mene}
    Let $B, C, E, F \in \A^2_\C$ be points in general linear position with distinct $x$-coordinates. Let $A = \ovl{BF} \cap \ovl{CE}$ and $D = \ovl{BC} \cap \ovl{EF}$. If $A, D \in \A^2_\C$, then
    \begin{equation}
        \frac{x(A) - x(F)}{x(F)-x(B)} \cdot \frac{x(B)-x(D)}{x(D)-x(C)} \cdot \frac{x(C)-x(E)}{x(E)-x(A)} = -1. \label{eq_menelaus}
    \end{equation} 
\end{lemma}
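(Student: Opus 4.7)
The identity \eqref{eq_menelaus} is a reformulation of the classical Menelaus' theorem via projection to the $x$-axis. The plan is to recognize the triangle and transversal hiding in the hypothesis, apply classical Menelaus, and then convert signed affine ratios into $x$-coordinate differences.

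First I would identify the triangle and transversal. General linear position of $B, C, E, F$, combined with $A = \ovl{BF} \cap \ovl{CE}$, implies that $A, B, C$ are not collinear: if $A \in \ovl{BC}$, then $\ovl{BF}$ and $\ovl{BC}$ are distinct lines (since $F \notin \ovl{BC}$ by general position), so their unique intersection point forces $A = B$, whence $B \in \ovl{CE}$, contradicting general position. Hence $A, B, C$ span a nondegenerate triangle whose side-lines are $\ovl{AB} = \ovl{BF}$, $\ovl{AC} = \ovl{CE}$, and $\ovl{BC}$. By construction, $D, E, F$ all lie on the single line $\ovl{EF}$, meeting these three side-lines at $D$, $E$, $F$ respectively, and general position prevents this transversal from passing through any of $A, B, C$.

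Next I would apply classical Menelaus to $\triangle ABC$ cut by the transversal $\ovl{EF}$, obtaining
\begin{equation*}
    \sigma(B, D, C) \cdot \sigma(C, E, A) \cdot \sigma(A, F, B) = -1,
\end{equation*}
where $\sigma(P, Q, R) = \overrightarrow{PQ} / \overrightarrow{QR}$ denotes the signed affine ratio of three collinear points on a common affine line. The projection $\pi \colon \A^2_\C \to \A^1_\C$, $(x, y) \mapsto x$, restricts to an affine isomorphism on any non-vertical line, and affine isomorphisms preserve signed ratios, so $\sigma(P, Q, R) = (x(Q) - x(P))/(x(R) - x(Q))$ whenever the line through $P, Q, R$ is non-vertical. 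Distinctness of the $x$-coordinates of $B, C, E, F$ makes the lines $\ovl{BC}, \ovl{CE}, \ovl{BF}$ non-vertical, and together with $A, D \in \A^2_\C$ this forces $x(A) \neq x(F), x(E)$ and $x(D) \neq x(B), x(C)$, so all six differences appearing in \eqref{eq_menelaus} are nonzero. Substituting the coordinate expressions into the Menelaus identity and regrouping the six resulting factors into the cyclic arrangement displayed in the lemma yields \eqref{eq_menelaus}.

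The main obstacle is purely bookkeeping: when rearranging the three Menelaus factors into the particular cyclic form of the lemma, one must check that the sign flips from swapping numerators and denominators cancel in pairs so that the product remains $-1$. If one wished to avoid invoking Menelaus entirely, one could instead solve the two linear systems defining $A$ and $D$ by Cramer's rule, write $x(A)$ and $x(D)$ explicitly as rational functions of the coordinates of $B, C, E, F$, and verify \eqref{eq_menelaus} as a direct algebraic identity; however, the Menelaus route makes the geometric content transparent and is notably shorter.
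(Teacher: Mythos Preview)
Your proof is correct but takes a different route from the paper's. The paper observes that each factor in \eqref{eq_menelaus} is a ratio of differences of $x$-coordinates of three \emph{collinear} points, hence is invariant under projective transformations that keep the points affine; it then chooses a convenient frame (one with $x(C)=-x(B)$ and $x(E)=-x(F)$, available since $\charac \C \neq 2$) in which $x(A)=x(D)=0$ and the identity is a one-line evaluation. You instead recognize the hidden triangle $ABC$ with transversal $\ovl{EF}$, invoke classical Menelaus directly, and project each signed ratio to the $x$-axis using that $\ovl{BC},\ovl{CE},\ovl{BF}$ are non-vertical. Your approach makes the connection to the classical theorem explicit and avoids any appeal to projective invariance or choice of frame; the paper's approach is self-contained (it does not assume Menelaus as a black box) and makes the projective nature of the identity transparent, which is what motivates its later use. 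Both are short; yours requires slightly more care in verifying the nondegeneracy of the triangle and transversal, which you handle correctly.
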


\begin{proof}
    The affineness condition allows us to define the $x$-coordinates of the six points. Each of the three factors in the left side of \eqref{eq_menelaus} is invariant under projective transformations whenever the moved points remain affine, since the three points in each factor are collinear by assumption. So it suffices to prove the lemma for any particular choice of projective frame $B,C,E,F$. Since $2 \neq 0$ in $\C$, there is a projective frame with distinct $x$-coordinates such that $x(C) = -x(B)$, $x(E) = -x(F)$. Then we have $x(A) = x(D) = 0$ and the result follows from then evaluating the left side of \eqref{eq_menelaus}.
\end{proof}

A $6$-tuple of points $(A,B,C,D,E,F)$ satisfying the hypotheses of Lemma \ref{lem_mene} is said to be an \emph{affine Menelaus configuration}.

\begin{thm} \label{thm_equal_length}
Let $T= T_{a,b,c,d}$ be an equal-length pentagram map over $\C$. The degree sequence of $T$ has at most polynomial growth, so $\lambda_1(T) = 1$.
\end{thm}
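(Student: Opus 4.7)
The plan is to follow Affolter-de Tili\`ere-Melotti's strategy for corrugated pentagram maps and embed the iterated affine coordinates of an equal-length $T_{a,b,c,d}$ into a solution of the dSKP recurrence. Once the embedding is set up, Theorem \ref{thm_dskp} immediately forces polynomial growth of each iterated coordinate, hence polynomial growth of $\deg T^m$, and hence $\lambda_1(T) = 1$.

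First, choose a generic starting polygon $v$ and an affine chart in which every vertex appearing in the orbit $\{T^m v\}$ has well-defined coordinates $(x,y)$. Fix $i \in \Z$ and apply Lemma \ref{lem_mene} to the six points $B = v_{i+a}$, $F = v_{i+b}$, $C = v_{i+c}$, $E = v_{i+d}$, $A = v'_i = \ovl{BF}\cap \ovl{CE}$, and $D = \widetilde{v}_i := \ovl{BC}\cap \ovl{EF}$. The equal-length condition $b-a = d-c$ is equivalent to $c-a = d-b$, so $\widetilde v_i$ is the new vertex of $T_{a,c,b,d}(v)$, itself an equal-length pentagram map of the same type. Menelaus produces one multiplicative identity among $x$-coordinates (and a parallel one among $y$-coordinates) tying $x(v'_i)$ and $x(\widetilde v_i)$ to the four input $x$-coordinates, with each factor a difference of $x$-coordinates of collinear triples.

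Next, iterate. The vertices $(T^m v)_i$ together with the auxiliary vertices $\widetilde{v}_i^{(m)}$ produced at every step form a network indexed by three parameters: the vertex index $i$, the iteration count $m$, and a parity coordinate distinguishing $T$-outputs from auxiliary $T_{a,c,b,d}$-type outputs. Realize this network as a sublattice of the tetrahedral-octahedral lattice $\shL$ in such a way that each Menelaus identity obtained above is exactly the dSKP relation \eqref{eq_dskp} at the corresponding $\shL'$-point. With this identification, the initial $x$-coordinates of $v$ and of the first auxiliary layer serve as dSKP initial data on two consecutive slices $\mc I_0 \cup \mc I_1$, and the $x$-coordinates along the entire orbit appear as the canonical dSKP solution with this data. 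Theorem \ref{thm_dskp} then gives polynomial growth in $m$ for the degree of $x\bigl((T^m v)_i\bigr)$ as a rational function of the initial $x$-coordinates, and the same argument applied to $y$ yields analogous polynomial growth.

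Finally, to pass from polynomial growth of affine coordinate formulas to polynomial growth of the lattice-map degree, write the homogeneous local rule of $T^m$ in the fixed affine chart; the reduced total $(x,y)$-degrees bound $\deg T^m$ up to a constant coming from multi-homogenization in the ambient $\PP^2$ factors. Both degrees grow polynomially, so $\deg T^m$ grows polynomially and $\lambda_1(T) = \lim_{m\to\infty} (\deg T^m)^{1/m} = 1$. The main obstacle is the combinatorial step in the third paragraph: producing an explicit bijection between the $(i,m,\text{parity})$-network of pentagram and auxiliary vertices and a sublattice of $\shL$ under which every Menelaus identity arising from $T$ or $T_{a,c,b,d}$ matches a dSKP instance. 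This is precisely the combinatorial core of \cite{MR4865926} in the deep-diagonal case $(0,b,1,b+1)$, and extending it to general equal-length parameters $(a,b,c,d)$ with $b-a=d-c$ should reduce to a relabeling of the lattice coordinates, since the same local Menelaus picture appears at every vertex.
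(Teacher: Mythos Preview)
Your overall strategy---Menelaus $\Rightarrow$ dSKP $\Rightarrow$ Theorem \ref{thm_dskp} $\Rightarrow$ polynomial degree growth---is the same as the paper's, but your choice of Menelaus configuration is different, and the difference is exactly where your acknowledged ``main obstacle'' lies.

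You apply Lemma \ref{lem_mene} with $B,F,C,E$ the four vertices $v_{i+a},v_{i+b},v_{i+c},v_{i+d}$ at a \emph{single} time level, so that $A=(Tv)_i$ and $D=(\tilde T v)_i$ with $\tilde T=T_{a,c,b,d}$ are both \emph{forward} outputs. In the dSKP octahedron at $r\in\shL'$, the six neighbours split as four at height $k$ and one each at heights $k-1$ and $k+1$; your $A$ and $D$ would have to sit at $k+1$ and $k-1$ respectively. But then continuing the recurrence from level $k+1$ (the $Tv$ layer) forces the level-$k$ values produced by the next round of Menelaus to be $\tilde T(Tv)$, which would have to coincide with the original level-$k$ data $v$. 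This only happens when $\tilde T=T^{-1}$ up to a shift, and $T_{a,c,b,d}$ is generally not the inverse of $T_{a,b,c,d}$. So the ``parity'' bookkeeping you sketch does not close up into a consistent $\shL$-indexed dSKP solution; the relabeling you hope for is not merely cosmetic.

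The paper avoids this entirely by invoking Proposition \ref{prop_inv_Z}. After normalizing to $T=T_{0,b,c,b+c}$ with $\gcd(b,c)=1$, it takes $B,C,E,F$ to be four vertices of $T^{m+1}v$, so that $D=(T^{m+2}v)_i$ by the definition of $T$, while $A=\ovl{BF}\cap\ovl{CE}=(T^m v)_{i+b+c}$ follows from the explicit formula $T^{-1}=T_{-d,-b,-c,-a}$ for the inverse. Thus a single Menelaus identity already relates three consecutive time levels $m,m+1,m+2$ of $T$ itself---no auxiliary map, no parity coordinate. A linear map $\eta:\shL\to\Z^2$ sending $(1,-1,0)\mapsto(b,0)$, $(-1,-1,0)\mapsto(c,0)$, $(0,-1,1)\mapsto(0,1)$ then identifies $w_r=x_{\eta(r)}$ with a dSKP solution whose initial data are the $x$-coordinates of $v$ and $Tv$, and Theorem \ref{thm_dskp} applies directly. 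The use of invertibility is the missing idea in your proposal.
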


\begin{proof}
    By Observation \ref{obs_shift}, we may compose with a shift to reduce to the case $T = T_{0,b,c,b+c}$. By Observation \ref{obs_cart_power}, we may further reduce to the case $\gcd(b,c) = 1$.

Since $T$ is dominant by Proposition \ref{prop_dom}, for all $v \in (\PP^2)^\Z$ outside a countable union of proper algebraic subsets:
\begin{enumerate}
    \item for all $m$, the iterate $T^m(v)$ is defined,
    \item for all $m$ and all $i$, we have $T^m(v)_i \in \A^2$,
    \item for all $m$ and all $i$, the $4$-tuple $T^m(v)_i$, $T^m(v)_{i+b}$, $T^m(v)_{i+c}, T^m(v)_{i+b+c}$ is in general linear position and these $4$ vertices have distinct $x$-coordinates.
\end{enumerate}
For any $(i,m) \in \Z^2$, let $v_{i,m} = T^m(v)_i$ and $x_{i,m} = x(v_{i,m})$. 
Let
$$B = v_{i+c,m+1}, C = v_{i+b+c, m+1}, E = v_{i+b,m+1}, F = v_{i, m+1}, A = v_{i+b+c, m},D = v_{i,m+2}.$$
By definition of skew pentagram map, we have $D = \ovl{BC} \cap \ovl{EF}$. Since equal-length maps are invertible by equal-length maps (Proposition \ref{prop_inv_Z}), we have $A = \ovl{BF} \cap \ovl{CE}$.
So $(A,B,C,D,E,F)$ is an affine Menelaus configuration. Applying Lemma \ref{lem_mene} yields
    \begin{equation}
        \frac{x_{i+b+c,m} - {x_{i,m+1}}}{x_{i,m+1}-x_{i+c,m+1}} \cdot \frac{x_{i+c,m+1}-x_{i,m+2}}{x_{i,m+2}-x_{i+b+c,m+1}} \cdot \frac{x_{i+b+c,m+1}-x_{i+b,m+1}}{x_{i+b,m+1}-x_{i+b+c,m+2}} = -1. \label{eq_menelaus_penta}
    \end{equation} 
Let $\eta: \Lambda \to \Z^2$ be the linear map defined on a basis of $\Lambda$ by 
$$\eta(1,-1,0) = (b,0), \eta(-1,-1,0) = (c,0), \eta(0,-1,1) = (0,1).$$
It follows that $w_r = x_{\eta(r)}$ is a solution to the dSKP recurrence. Since $\gcd(b,c) = 1$, the map $\eta$ is surjective. So for every pair $(i,m)$, the rational function $g_m$ expresses each $x_{i,m}$ in terms of $\{x_{i',m'} : (i',m') \in \mc{I}\}$. Since the degree growth of $g_m$ in $m$ is polynomial (Theorem \ref{thm_dskp}), so is the degree growth of $x_{i,m}$ in $m$ in terms of the variables $\{x_{i',m'} : (i',m') \in \mc{I}\}$. Running the same argument for the $y$-coordinates, we have shown that there are rational formulas for the coordinates of the vertices of $T^m(v)$ with at most polynomial degree growth in the coordinates of $v, T(v)$. Since the coordinates of $T(v)$ are themselves fixed rational functions of the coordinates of $v$, we are done.
\end{proof}

\begin{cor} \label{cor_equal_len_dd}
    Let $T^{(n)}_{a,b,c,d}$ be an equal-length pentagram map on closed $n$-gons, assumed dominant. Let $\ovl{T}^{(n)}_{a,b,c,d}$ be the induced map on the moduli space of closed $n$-gons. Then 
    $$\lambda_1(T^{(n)}_{a,b,c,d}) = \lambda_1(\ovl{T}^{(n)}_{a,b,c,d}) = 1.$$
\end{cor}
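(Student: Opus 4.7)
\textbf{Proof plan for Corollary \ref{cor_equal_len_dd}.} The plan is to chain two comparison inequalities starting from Theorem \ref{thm_equal_length}, which already gives $\lambda_1(T_{a,b,c,d}) = 1$ for the equal-length lattice map on infinite polygons. The work of the corollary is purely formal; no further Menelaus input is needed.

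First, $T^{(n)}_{a,b,c,d}$ is by construction the lattice map on $(\PP^2)^{\Z/n\Z}$ induced from $T_{a,b,c,d}$ via the quotient $\Z \twoheadrightarrow \Z/n\Z$. The distinctness of $a,b,c,d$ modulo $n$ (implicit in the dominance assumption) ensures the hypotheses of Lemma \ref{lem_dd_periodic_comparison} hold, so
\[
\lambda_1(T^{(n)}_{a,b,c,d}) \leq \lambda_1(T_{a,b,c,d}) = 1.
\]
The reverse inequality $\lambda_1(T^{(n)}_{a,b,c,d}) \geq 1$ is immediate, since a dominant iterable map has degree at least $1$ at every iterate.

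Second, to pass to the moduli space, I would use the semiconjugacy by the quotient $\pi : (\PP^2)^n \dashrightarrow \mc{C}_n$, yielding the commutative diagram of dominant rational maps
\[
\begin{tikzcd}
(\PP^2)^n \arrow[d,dashed,"\pi"] \arrow[r,dashed,"T^{(n)}_{a,b,c,d}"] & (\PP^2)^n \arrow[d,dashed,"\pi"] \\
\mc{C}_n \arrow[r,dashed,"\bar{T}^{(n)}_{a,b,c,d}"] & \mc{C}_n
\end{tikzcd}
\]
(Dominance of $\bar{T}^{(n)}_{a,b,c,d}$ follows from dominance of both $T^{(n)}_{a,b,c,d}$ and $\pi$.) Since $\mc{C}_n$ is only quasi-projective, I would first replace it with a smooth projective compactification, which does not change $\lambda_1(\bar{T}^{(n)}_{a,b,c,d})$ by the birational invariance of dynamical degrees (Lemma \ref{lem_bir_inv}). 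Applying Lemma \ref{lemma_rel_dd}(\ref{it_lemma_rel_dd_skew_prod}) to this smooth projective model gives $\lambda_1(\bar{T}^{(n)}_{a,b,c,d}) \leq \lambda_1(T^{(n)}_{a,b,c,d}) = 1$, and dominance again forces the matching lower bound.

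There is no genuine obstacle here: the only items to verify are the housekeeping conditions of the two comparison lemmas (existence of the induced quotient map and a smooth projective model), both of which are standard. All the dynamical content has been supplied by Theorem \ref{thm_equal_length}.
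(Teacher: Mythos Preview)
Your proposal is correct and follows exactly the same route as the paper: invoke Theorem \ref{thm_equal_length} for $\lambda_1(T_{a,b,c,d})=1$, then Lemma \ref{lem_dd_periodic_comparison} to descend to closed $n$-gons, then Lemma \ref{lemma_rel_dd}(\ref{it_lemma_rel_dd_skew_prod}) to descend to the moduli space. Your added remarks about the trivial lower bound $\lambda_1\geq 1$ and the need for a smooth projective model of $\mc{C}_n$ are sensible housekeeping that the paper leaves implicit.
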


\begin{proof}
Follows from Theorem \ref{thm_equal_length} ($\lambda_1(T_{a,b,c,d}) = 1$), Lemma \ref{lem_dd_periodic_comparison} (comparing dynamical degrees of induced lattice maps), and Lemma \ref{lemma_rel_dd} (\ref{it_lemma_rel_dd_skew_prod}) (comparing dynamical degrees of semiconjugate rational maps).
\end{proof}

\begin{remark} \label{remark_lax}
Equal-length pentagram maps admit a Lax form up to a finite semiconjugacy \cite{izo_khes}. Corollary \ref{cor_equal_len_dd} is thus consistent with a conjecture of Khesin-Soloviev that rational maps admitting Lax forms have arithmetic degree $1$, or equivalently, dynamical degree $1$ \cite[Conjecture 6.6]{khesinsoloviev}.
\end{remark}

\begin{remark}
        A rational map is called \emph{algebraically completely integrable} if some iterate is conjugate to a family of translations on a relative Picard or Jacobian variety. See e.g. \cite{weinreich2021}. Any algebraically completely integrable map has dynamical degree $1$. This follows from the fact that translations on individual abelian varieties have dynamical degree $1$ \cite{MR4363581} combined with the relative dynamical degree formula \cite{MR4048444}. So Theorem \ref{thm_equal_length} would be consistent with algebraic complete integrability (perhaps up to finite semiconjugacy) of equal-length maps on closed $n$-gons. 
\end{remark}

\begin{remark}
In our setup, \eqref{eq_dskp} does not define a lattice map on $(\PP^1)^{\mc{I}_0}$. The problem is that $w$-values are recursively determined in two steps, with offset domains identified with $\mc{I}_0$ and $\mc{I}_1$. However, it does define a lattice map on 
$$\omega : (\PP^2)^{\mc{I}_0} \dashrightarrow (\PP^2)^{\mc{I}_0}$$
by identifying $(w_r : r \in \mc{I}_0 \cup \mc{I}_1)$ with $([w_r : w_{r+e_1+e_3} : 1] : r \in \mc{I}_0)$. Applying $\omega$ shifts the second coordinate to the first, and computes the new second coordinate by $g_{(1,0,1)}$, with neighborhood $((0,0),(1,1),(1,-1),(2,0))$. Other identifications are possible as well.
\end{remark}

\section{A non-integrable skew pentagram map} \label{sect_truly}

We showed in Proposition \ref{prop_upper} that the dynamical degree $\lambda_1(T_{a,b,c,d})$ of any skew pentagram map over an algebraically closed field $\bk$ is at most $4$. In this section, we show that, for an infinite family of truly skew pentagram maps over $\C$, we achieve the upper bound $\lambda_1(T_{a,b,c,d}) = 4$. We do this by combining Proposition \ref{prop_upper} with an argument that $\lambda_1(T_{a,b,c,d}) \geq 4$. Our technique is to use non-integrability of a subsystem to obtain non-integrability of a larger system. The crucial trick is to study just one truly skew map $T_{0,2,1,4}$ on a moduli space $\mc{C}_{8,\RS}$ of rotationally symmetric octagons, which is a surface. The behavior of the restricted map $\bar{T}_{\RS}$ resembles the heat map on the moduli space of closed $5$-gons \cite{KR}.

The reader may wonder why rotationally symmetric octagons are a natural choice to study. Our goal was to find an example of a skew pentagram map that is ``as chaotic as possible'' -- maximal dynamical degree and no preserved fibration. Experimenting with smaller polygons, we did not find any skew pentagram maps satisfying both requirements. As the number of vertices increases, the computations get more difficult, but octagons with $4$-fold rotational symmetry have an invariant subspace of dimension $2$ and a convenient normal form, making them a natural next candidate to check. 

We note that the equal-length map $T_{0,3,1,4}$ on $\mc{C}_{8,\RS}$ was proved to be completely integrable by Schwartz \cite{MR4761767,textbookcasepentagramrigidity}.

\subsection{Computing dynamical degrees on surfaces}

In this preliminary section, we recall some standard techniques for computing pullbacks and dynamical degrees of surface maps. These techniques are valid over any algebraically closed field. A general reference for this section is \cite{KR}.

The following lemma describes how to compute pullbacks.

\begin{lemma} \label{lem_compute_pback}
    Let $f: X \dashrightarrow Y$ be a dominant rational map between smooth projective surfaces. Suppose $C \subset Y$ is an irreducible curve defined by a local equation $\psi = 0$, where $\psi$ vanishes to order $1$ along $C$. Let $f^{(-1)}(C)$ denote the set $\ovl{(f|_{X \smallsetminus \Ind f}})^{-1} (C)$. Then
    \begin{equation}
        f^* C = \sum_{\substack{B \subset f^{(-1)}(C) \\ \textrm{irreducible}}} m_B B,
    \end{equation}
    where $m_B$ is the multiplicity of vanishing of $\psi \circ f$ along $B$.
\end{lemma}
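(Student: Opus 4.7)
The plan is to reduce to the case of a morphism by resolving the indeterminacy of $f$, and then to identify the pullback of $C$ with the divisor of zeros of the pullback of the local equation $\psi$.

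First, since $X$ is a smooth projective surface and $Y$ is projective, standard surface theory provides a smooth projective surface $\tilde{X}$ together with a birational morphism $\pi \colon \tilde{X} \to X$ (a finite sequence of point blowups) and a morphism $\tf \colon \tilde{X} \to Y$ satisfying $\tf = f \circ \pi$ on the dense open set where $f$ is defined. By the definition of pullback for a dominant rational map between smooth projective surfaces (cf.\ \cite{MR3617981}), we have $f^{*} C = \pi_{*} \tf^{*} C$, so it suffices to compute $\tf^{*} C$ and then push forward by $\pi$.

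Next, because $\tf$ is a morphism and $\psi$ vanishes to order $1$ along $C$, the function $\psi \circ \tf$ is a well-defined local equation on a neighborhood of $\tf^{-1}(C)$, and the Cartier divisor $\tf^{*} C$ is its divisor of zeros. In particular, every irreducible component $E$ of the support of $\tf^{*} C$ appears with multiplicity $\ord_{E}(\psi \circ \tf)$. Each such $E$ is of exactly one of two types: either (a) $E$ is $\pi$-exceptional, i.e.\ contracted to a point lying in the finite set $\Ind f$, or (b) $E$ is the strict transform of an irreducible curve $B \subset X$ whose generic point is sent by $f$ into $C$. In case (b), $B$ lies in $f^{(-1)}(C)$ by definition of the latter as the closure of the preimage of $C$ outside $\Ind f$, and conversely every irreducible curve in $f^{(-1)}(C)$ has a unique strict transform under $\pi$.

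Applying $\pi_{*}$, the exceptional components of type (a) contribute zero, while each strict transform $\tilde{B}$ in case (b) pushes forward to $B$ with multiplicity $1$. This yields
\[
f^{*} C \;=\; \sum_{B \subset f^{(-1)}(C)} \ord_{\tilde{B}}(\psi \circ \tf)\, B.
\]
The final step, and the one requiring the most care, is to identify $\ord_{\tilde{B}}(\psi \circ \tf)$ with $m_{B} = \ord_{B}(\psi \circ f)$. Here we use that $\Ind f$ is a finite set of points on the smooth surface $X$, so $\pi$ is an isomorphism on a neighborhood of the generic point of $B$; hence $\psi \circ f$ extends uniquely to a regular function near this generic point, and its order of vanishing along $B$ equals the order of vanishing of $\psi \circ \tf$ along $\tilde{B}$, as desired. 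The main (small) obstacle is precisely this verification that the multiplicities transfer faithfully through the blowup $\pi$, together with the bookkeeping that no irreducible components of $f^{(-1)}(C)$ are lost to, or spuriously created by, the exceptional locus.
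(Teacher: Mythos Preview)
Your proof is correct and follows the standard argument: resolve indeterminacy by blowups, compute the morphism pullback as the divisor of $\psi\circ\tf$, and push forward, checking that multiplicities along strict transforms agree with those computed on $X$ because $\pi$ is an isomorphism away from the finite set $\Ind f$. The paper itself does not prove this lemma at all; it simply cites \cite[Lemma 2.1]{KR}, so your argument is strictly more informative than what appears in the text.
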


\begin{proof}
    See \cite[Lemma 2.1]{KR}.
\end{proof}

The difficulty in computing dynamical degrees is that pullback is not functorial, so each homomorphism $(f^m)^*$ must be computed separately in general. The next definition characterizes maps for which iteration works well with pullback. Then we explain how to use this to compute dynamical degrees.

\begin{defn}
Let $f: X \dashrightarrow X$ be a dominant rational self-map of a smooth projective variety. Then we say that $f$ is \emph{algebraically stable} if, for all $m \geq 1$,
$$(f^m)^* = (f^*)^m.$$
\end{defn}

The \emph{spectral radius} of a linear endomorphism $T$ is $\rad T \colonequals \max_\lambda \abs{\lambda},$ where $\lambda$ ranges over the eigenvalues of $T$.

\begin{lemma} \label{lem_norms_and_rads}
    Let $f: X \dashrightarrow X$ be a dominant rational self-map of a smooth projective variety. If $f$ is algebraically stable, then 
$$\lambda_1(f) = \rad (f^*: \Num X \to \Num X).$$
\end{lemma}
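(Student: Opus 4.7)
The key idea is that algebraic stability lets us replace $(f^m)^*$ with the iterate $(f^*)^m$ of a single linear operator $T := f^*$ on $\Num X \otimes \R$. This reduces the question to showing that an intersection-theoretic growth rate equals the spectral radius of $T$, which should follow from a Gelfand-formula-plus-Perron-Frobenius argument.

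First, let $n = \dim X$ and fix an ample class $\Delta$. Using Definition \ref{def_dd_rat} together with algebraic stability,
$$\lambda_1(f) = \lim_{m \to \infty} (T^m \Delta \cdot \Delta^{n-1})^{1/m}.$$
For the upper bound $\lambda_1(f) \leq \rad T$, observe that $\alpha \mapsto \alpha \cdot \Delta^{n-1}$ is a bounded linear functional on the finite-dimensional space $\Num X \otimes \R$, so $T^m \Delta \cdot \Delta^{n-1} \leq C \|T^m\|$ for some constant $C$ and any fixed operator norm. Gelfand's formula $\rad T = \lim_m \|T^m\|^{1/m}$ then gives the desired bound after taking $m$-th roots.

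For the lower bound, I would use that pullback preserves nef classes, so $T$ preserves the nef cone $\mathrm{Nef}(X) \subset \Num X \otimes \R$, which contains the ample class $\Delta$ in its interior. By a Perron-Frobenius-type theorem for linear operators preserving a pointed, full-dimensional convex cone in finite dimensions, $\rad T$ is an eigenvalue of $T$ with an eigenvector $\alpha \in \mathrm{Nef}(X) \setminus \{0\}$. Since $\Delta$ is interior to the nef cone, for sufficiently small $\epsilon > 0$ the class $\Delta - \epsilon \alpha$ is nef, and hence so is its image $T^m(\Delta - \epsilon \alpha) = T^m \Delta - \epsilon (\rad T)^m \alpha$. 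Intersecting with the ample class $\Delta^{n-1}$ and applying Kleiman's criterion (nef meets ample nonnegatively, and strictly positively when the nef class is nonzero) gives
$$T^m \Delta \cdot \Delta^{n-1} \geq \epsilon (\rad T)^m (\alpha \cdot \Delta^{n-1}),$$
with $\alpha \cdot \Delta^{n-1} > 0$. Taking $m$-th roots and passing to the limit yields $\lambda_1(f) \geq \rad T$, completing the proof.

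The main obstacle is the lower bound: it requires both checking that pullback preserves the nef cone in the rational setting (which uses algebraic stability together with the morphism case via resolution of indeterminacy, since without algebraic stability $T^m$ need not equal $(f^m)^*$) and invoking the appropriate Perron-Frobenius theorem to produce a nef eigenvector for the spectral radius. The upper bound, by contrast, is essentially a soft consequence of Gelfand's formula and the equivalence of norms on a finite-dimensional space.
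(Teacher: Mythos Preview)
The paper states this lemma without proof, treating it as a standard fact from the theory of dynamical degrees, so there is no argument in the paper to compare against.

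Your sketch is the standard argument and is correct in outline. Two refinements are worth noting. First, the claim that $f^*$ preserves the nef cone for a \emph{rational} map is true but not entirely obvious (the pushforward step $\pi_*$ in $f^* = \pi_* \tilde f^*$ requires care); the cleaner route is to run the same argument with the pseudoeffective cone, since $f^*$ manifestly preserves effective classes (pullback by a dominant morphism preserves effectivity, as does pushforward by a birational morphism). The psef cone is still closed, pointed, and full-dimensional, so Perron--Frobenius yields a psef eigenvector $\alpha$; and since ample is open in $N^1(X)_\R$, the class $\Delta - \epsilon\alpha$ is still ample (hence psef) for small $\epsilon$, so your inequality chain goes through unchanged. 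The positivity $\alpha \cdot \Delta^{n-1} > 0$ then holds because the complete-intersection class $\Delta^{n-1}$ lies in the interior of the movable cone of curves, which is dual to the psef cone.

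Second, your parenthetical suggesting that algebraic stability is needed to check cone preservation is slightly off: preservation of the cone by $f^*$ is a property of a single pullback and does not use algebraic stability. Algebraic stability is used exactly once, in your first step, to replace $(f^m)^*$ by $(f^*)^m$.
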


Because of Lemma \ref{lem_norms_and_rads}, it is usually possible to explicitly compute dynamical degrees for algebraically stable maps. We therefore desire a geometric criterion for showing that a map is algebraically stable. We only require this in the surface case, where the ideas are straightforward.

A \emph{contracted curve} of a dominant rational map $f: X \dashrightarrow Y$ is an irreducible curve $C \subset \PP^2$ such that $f(C \smallsetminus \Ind f)$ is a point.

\begin{lemma} \label{lem_as_no_ctrcted}
    If $f: X \dashrightarrow X$ is a dominant rational self-map of a smooth projective surface and $f$ has no contracted curves, then $f$ is algebraically stable.
\end{lemma}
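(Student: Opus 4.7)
The approach is to invoke the standard Diller--Favre-type characterization of algebraic stability for surface maps: a dominant rational self-map $f$ of a smooth projective surface is algebraically stable if and only if no irreducible curve $V \subset X$ is carried by some iterate $f^n$ into $\Ind f$. Given this, the proof reduces to showing that the ``no contracted curves'' hypothesis rules out all such configurations.

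\textbf{Step 1 (no iterate contracts a curve).} First I would prove by induction on $n \geq 1$ that $f^n$ has no contracted curves. The base case $n=1$ is the hypothesis. For the inductive step, suppose for contradiction that $f^n$ contracts some irreducible curve $C$, meaning $f^n(C \smallsetminus \Ind f^n)$ is a single point. Since $f$ contracts no curves and is dominant, $f(C \smallsetminus \Ind f)$ is dense in some irreducible curve $C'$. Applying $f^{n-1}$ to $C'$ then yields a single point (up to a dense open set), so $f^{n-1}$ contracts $C'$, contradicting the inductive hypothesis.

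\textbf{Step 2 (reduction to contracted curves).} Since $X$ is a smooth projective surface and $f$ is assumed to be represented on its maximal domain of definition, $\Ind f$ has codimension at least $2$ in $X$ and is therefore a finite set of points. Consequently, if $V \subset X$ is an irreducible curve with $f^n(V \smallsetminus \Ind f^n) \subset \Ind f$, then the image is finite, and $V$ is contracted by $f^n$ -- which is impossible by Step 1.

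\textbf{Step 3 (closing the argument).} Combining Steps 1 and 2 with the Diller--Favre criterion finishes the proof. The main obstacle is justifying the criterion itself, which I would either cite from \cite{MR1867314} (Diller--Favre) or establish directly by induction on $m$, comparing $(f^m)^*D$ and $f^* (f^{m-1})^*D$ component-by-component via Lemma \ref{lem_compute_pback}: the two divisors can differ only along irreducible curves $B$ appearing in $f^{(-1)}(B')$ for some component $B'$ of $(f^{m-1})^{(-1)}(D)$ whose $f$-image lies in $\Ind f^{m-1}$, and no such $B$ can exist once no iterate of $f$ contracts a curve.
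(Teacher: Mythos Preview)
Your proof is correct and rests on the same underlying idea as the paper's: use the standard functoriality criterion for pullbacks, together with the fact that on a smooth surface the indeterminacy locus is finite, so a curve can land in it only if it is contracted. The one difference is in how the induction is sliced. You invoke the Diller--Favre characterization, which amounts to decomposing $f^m = f \circ f^{m-1}$ and therefore forces you to check that no \emph{iterate} $f^{m-1}$ contracts a curve (your Step 1). The paper instead cites the two-map criterion $(g \circ f)^* = g^* f^*$ (from \cite{MR3342248}) and decomposes $f^m = f^{m-1} \circ f$, so that only $f$ itself must be shown to contract no curves into $\Ind f^{m-1}$; this is immediate from the hypothesis, and Step 1 becomes unnecessary. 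Both routes are short and standard; the paper's is just marginally more direct.
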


\begin{proof}
If $X,Y,Z$ are smooth surfaces and $f:X \dashrightarrow Y$ and $g: Y \dashrightarrow Z$ are dominant, then $g^* f^* = (g \circ f)^*$ if and only there if is no curve $C$ in $X$ for which $f(C \smallsetminus \Ind f) \subset \Ind g$; see \cite[Proposition 1.4]{MR3342248}. Since $\Ind g$ is $0$-dimensional, the latter condition holds if $f$ has no contracted curves.
\end{proof}

To locate all contracted curves, we generally need to work in charts. However, the following lemma shows that a single computation suffices for self-maps of $\PP^2$.

\begin{lemma} \label{lem_contracted_vs_det_DF}
If $C$ is a contracted curve of a dominant map $f: \PP^2 \dashrightarrow \PP^2$, then $C$ is contained in the vanishing locus of $\det Df$, where $Df$ is the matrix of partial derivatives in \eqref{eq_Df}.
\end{lemma}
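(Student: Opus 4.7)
The plan is to work with the affine lift $F = (f_0, f_1, f_2) : \A^3 \to \A^3$ of $f$, where $f_0, f_1, f_2$ are homogeneous forms of a common degree, and then use the fact that a contracted curve $C$ pulls back to a hypersurface in $\A^3$ on which enough coordinate components of $F$ vanish to force the Jacobian determinant to drop rank.

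First I would reduce to a convenient normal form. Let $p = f(C \smallsetminus \Ind f)$ and choose $A \in \GL_3$ with $A \cdot (p_0, p_1, p_2)^T = (1,0,0)^T$. Replacing $F$ by $A \circ F$ changes $DF$ to $A \cdot DF$, so $\det D(A \circ F) = (\det A) \cdot \det DF$, and the vanishing locus of $\det Df$ in $\PP^2$ is preserved. After this change of coordinates I may assume $p = [1:0:0]$, so that both $f_1$ and $f_2$ vanish on the affine cone $\tilde C \subset \A^3$ over $C$.

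Next, since $\tilde C$ is a hypersurface in $\A^3$, in a Zariski open neighborhood of a general smooth point $x \in \tilde C$ it is cut out by a single irreducible polynomial $\psi$. Because $f_1, f_2$ vanish identically on $\tilde C$, I may factor $f_1 = \psi h_1$ and $f_2 = \psi h_2$ locally. Differentiating,
\[
\frac{\partial f_i}{\partial X_j} \;=\; \frac{\partial \psi}{\partial X_j}\, h_i \;+\; \psi\, \frac{\partial h_i}{\partial X_j}
\qquad (i = 1, 2),
\]
so along $\tilde C$ the $\psi$-term drops and the second and third rows of $Df$ become $h_1 \nabla \psi$ and $h_2 \nabla \psi$ respectively. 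These two rows are then scalar multiples of the common vector $\nabla \psi$, so $\det Df$ vanishes identically on $\tilde C$. Since $\det Df$ is a homogeneous polynomial in $X_0, X_1, X_2$, its vanishing on the cone $\tilde C$ is equivalent to its vanishing on $C \subset \PP^2$.

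I do not anticipate a serious obstacle; the only subtlety is the reduction to a local factorization $f_i = \psi h_i$, which requires working at a general smooth point of $\tilde C$ (an open dense subset) and noting that vanishing of $\det Df$ on a Zariski dense subset of the irreducible curve $C$ is enough to conclude $C \subseteq \{\det Df = 0\}$. The use of the $\GL_3$ reduction is just a matter of presentation and could be avoided by instead observing directly that two independent linear combinations of $f_0, f_1, f_2$ vanish on $\tilde C$ and running the same Leibniz computation on those two rows of the modified Jacobian.
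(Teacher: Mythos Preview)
Your proof is correct and takes a genuinely different route from the paper's. The paper argues geometrically via tangent spaces: lifting to $F:\A^3\smallsetminus 0\to\A^3$, it observes that at any $y$ in the cone over $C\smallsetminus\Ind f$, both the radial direction $\ker(\rmd\pi)_y$ and any lift of a nonzero tangent vector to $C$ are sent by $\rmd F$ into the one-dimensional radial direction $\ker(\rmd\pi)_{F(y)}$, so $(\rmd F)_y$ is not injective and $\det Df$ vanishes there. Your approach is purely algebraic: after a $\GL_3$ change on the target you force $f_1,f_2$ to vanish on the cone $\tilde C$, factor out the defining form $\psi$, and read off via Leibniz that rows $1$ and $2$ of $Df$ restrict on $\tilde C$ to scalar multiples of $\nabla\psi$, hence are dependent. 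This is more elementary and computational, avoiding any discussion of tangent bundles; the paper's argument is more conceptual but requires tracking kernels through the projectivization map. One small simplification you can make: since $\tilde C$ is an irreducible hypersurface in $\A^3$, its ideal in $\bk[X_0,X_1,X_2]$ is globally principal, so the factorizations $f_i=\psi h_i$ hold in the polynomial ring and there is no need to localize at a smooth point or invoke a density argument at the end.
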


\begin{proof}
In this proof, $\rmd$ denotes the differential, meaning the induced map on tangent bundles. Let $\pi: \A^3 \smallsetminus 0 \to \PP^2$ denote the projectivization map. At any point $y \in \A^3 \smallsetminus 0$, $\ker (\rmd \pi)_y$ consists of multiples of the vector of coordinates of $y$, and hence is $1$-dimensional. Further, if $F: \A^3 \smallsetminus 0 \to \A^3 \smallsetminus 0$ is a homogeneous map of degree $d \geq 1$, then since $F$ commutes with scaling, we have $(\rmd F)(\ker (\rmd \pi)_y) \subset \ker (\rmd \pi)_{F(y)}$. Since the vanishing locus of $\det Df$ is Zariski closed, it suffices to show that if $x \in C \smallsetminus \Ind f$, then $\det Df(x) = 0$. If $x \in C \smallsetminus \Ind f$, then any Zariski tangent vector to $C$ is contained in $\ker \rmd f$. Let $F$ be the affine lift of $f$, so $\pi \circ F = f \circ \pi$. Let $y \in \pi^{-1}(x)$. Let $\vec{v}$ be a tangent vector to $y$ such that $(d \pi)(\vec{v})$ is a nontrivial tangent vector to $C$; this exists because $d \pi$ is surjective. Since $C$ is contracted, we have $(\rmd f) (\rmd \pi) \vec{v} = 0$, which implies that $(\rmd \pi)(\rmd F) \vec{v} = 0$ by the chain rule. It follows that $(\rmd F)^{-1}(\ker \rmd \pi)_{F(y)}$ is at least $2$-dimensional, since it contains both $\ker (\rmd \pi)_y$ and $\vec{v}$, which lies outside $\ker (\rmd \pi)_y$. So $(\rmd F)_y$ is not injective. Since $Df$ is the matrix of $\rmd F$ up to scale, we obtain $(\det Df)(y) = 0$, so $\det Df(x) = 0$.
\end{proof}

\subsection{Rotationally $4$-symmetric octagons}

An $8$-gon $v$ is \emph{rotationally $4$-symmetric} if there exists $R \in \PGL_3$ such that $v_{i+2} = Rv_i$ for all $0 \leq i < 8$.

Let $(\PP^2)^8_{\RS}$ denote the subset of $(\PP^2)^8_{\GLP}$ consisting of rotationally $4$-symmetric $8$-gons. Let $\mc{C}_{8,\RS}$ be the image of $(\PP^2)^8_{\RS}$ in the moduli space $\mc{C}_{8}$, giving the diagram
\begin{center}
\begin{tikzcd}
(\PP^2)^8_\RS \arrow[d] \arrow[r,hook] & (\PP^2)^8_{\GLP} \arrow[d] \\
 \mc{C}_{8,\RS} \arrow[r,hook] &\mc{C}_{8}.
\end{tikzcd}
\end{center}
We embed $\mc{C}_{8,\RS}$ as a Zariski dense open subset of $\A^2$ using the following normal form. The \emph{standard rotation} is
$$ R([X:Y:Z]) =[-Y:X:Z].$$
The line $L_\infty : \{Z = 0\}$ in $\PP^2$ is preserved by $R$. We call $L_\infty$ the \emph{line at infinity} and identify $\A^2 \cong \PP^2 \smallsetminus L_\infty$. This distinguished $\A^2$ is the \emph{main affine patch}. The $8$-gons in $(\A^2)^8$ are called \emph{affine}. Set coordinates $x=X/Z, y=Y/Z$ on $\A^2$. An affine closed $8$-gon is \emph{normalized} if
$$v_1 = (1,0), \; v_3 = (0,1), \; v_5 = (-1,0), \; v_7 = (0,-1).$$
Each element of $\mc{C}_{8}$ has a unique normalized representative.
If $v$ is both normalized and rotationally $4$-symmetric, then $v$ is of the form
$$v_2 = (x,y), \; v_4 = (-y,x), \; v_5 = (-x,-y), \; v_7 = (y,-x)$$
for some $(x,y) \in \A^2$. So we have an embedding $$\mc{C}_{8,\RS} \hookrightarrow \A^2,$$
$$(\text{class of } v ) \mapsto (v_2 \text{ of the normalized representative}).$$

Composing with the distinguished inclusion $\A^2 \hookrightarrow \PP^2$ gives an embedding
$$\iota: \mc{C}_{8,\RS} \hookrightarrow \PP^2.$$

\subsection{A truly skew pentagram map}

The specific map we analyze is $T = T^{(8)}_{0,2,1,4}$. We recall that $T$ is defined by
$$T: (\PP^2)^8 \dashrightarrow (\PP^2)^8,$$
$$ (v_i) \mapsto (v'_i),$$
\begin{equation}
    \ovl{v_{i} v_{i+2}} \cap \ovl{v_{i+1} v_{i+4}}. \label{eq_def_skew}
\end{equation}

\begin{prop}
    The restriction of $T$ to $(\PP^2)^8_{\RS}$ defines a rational self-map of $(\PP^2)^8_{\RS}$, denoted $T_{\RS}$. The restriction of $\ovl{T}$ to $\mc{C}_{8,\RS}$ is a rational map, denoted $\ovl{T}_{\RS}$. These form a diagram
\begin{center}
\begin{tikzcd}
(\PP^2)^8_\RS \arrow[d] \arrow[r,dashed,"T_{\RS}"] & (\PP^2)^8_{\RS} \arrow[d] \\
 \mc{C}_{8,\RS} \arrow[r,dashed,"\ovl{T}_{\RS}"] &\mc{C}_{8,\RS}.
\end{tikzcd}
\end{center}
\end{prop}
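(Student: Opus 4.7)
The plan is to verify, in order, three things: (i) that the local rule of $T$ carries rotationally $4$-symmetric octagons to rotationally $4$-symmetric octagons, (ii) that the resulting restriction $T_{\RS}$ is a rational self-map with nonempty domain, and (iii) that $T_{\RS}$ descends to a rational self-map $\ovl{T}_{\RS}$ on the $\PGL_3$-quotient $\mc{C}_{8,\RS}$, making the square commute.

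For (i), the key observation is that the local rule \eqref{eq_def_skew} is built only out of the projectively natural operations of forming and intersecting lines, so it commutes with any $R \in \PGL_3$ applied vertex-wise. Concretely, suppose $v \in (\PP^2)^8_{\RS}$ and $R \in \PGL_3$ satisfies $v_{i+2} = R v_i$ for all $i$. Setting $v' = T(v)$, a direct check gives
\begin{align*}
v'_{i+2} &= \ovl{v_{i+2}v_{i+4}} \cap \ovl{v_{i+3}v_{i+6}} \\
&= \ovl{Rv_{i}\, Rv_{i+2}} \cap \ovl{Rv_{i+1}\, Rv_{i+4}} \\
&= R\bigl(\ovl{v_i v_{i+2}} \cap \ovl{v_{i+1} v_{i+4}}\bigr) = R v'_i,
\end{align*}
so $v'$ is rotationally $4$-symmetric with the same rotation $R$. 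Thus $T\bigl((\PP^2)^8_{\RS} \cap \Dom T\bigr) \subseteq (\PP^2)^8_{\RS}$.

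For (ii), I would note that $(\PP^2)^8_{\RS}$ is an irreducible algebraic subset of $(\PP^2)^8$ (cut out by the $\PGL_3$-equivariant conditions $v_{i+2} = R v_i$), and I need to exhibit at least one rotationally $4$-symmetric octagon on which $T$ is defined and remains in general linear position. A regular octagon inscribed in a conic over $\C$ works: $T$ is defined there (all diagonals in \eqref{eq_def_skew} are distinct), rotational symmetry is preserved by step (i), and genericity can be checked by perturbation within $(\PP^2)^8_{\RS}$. This shows $T_{\RS}$ is a genuine rational self-map of $(\PP^2)^8_{\RS}$.

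For (iii), the induced map $\ovl{T}_{\RS}$ on the moduli space is immediate: the diagonal $\PGL_3$-action preserves $(\PP^2)^8_{\RS}$ (conjugating the rotation $R$), and $T$ is $\PGL_3$-equivariant, so $T_{\RS}$ descends to the $\PGL_3$-quotient, giving a rational self-map of $\mc{C}_{8,\RS}$; commutativity of the square is automatic from the construction. The only step that might require genuine care is (ii), since checking the rationality of the restriction amounts to checking nontriviality at one point — I expect to handle this by an explicit regular-octagon computation or by directly writing $\ovl{T}_{\RS}$ in the $(x,y)$-coordinates furnished by the normal form, which is a finite symbolic calculation once the normalized form of $T_{\RS}(v)$ is worked out.
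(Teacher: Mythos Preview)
Your proposal is correct and follows essentially the same approach as the paper: $\PGL_3$-equivariance gives preservation of rotational symmetry, a regular $8$-gon witnesses that the restriction is nontrivial, and equivariance again gives the descent to $\mc{C}_{8,\RS}$. The only sharpening worth noting is in step (ii): rather than appealing to perturbation to check that the image stays in general linear position, the paper observes directly that $T$ sends the normalized regular real $8$-gon to a dilatation of itself, which is visibly in general linear position---this handles the GLP condition at the witness point in one stroke and avoids any separate genericity argument.
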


\begin{proof}
    Concerning $T_{\RS}$, there are two things to check. First, if $v \in (\PP^2)^8_\RS$, then $Tv$ is rotationally symmetric; this is because $T$ commutes with projective transformations. Second, if $v$ is general in $(\PP^2)^8_\RS$, then $Tv$ is in general linear position. To see this, notice that $T$ dilatates the normalized regular $8$-gon in $\R^2$. Concerning $\ovl{T}_{\RS}$, because the rational maps $T$ and $T_{\RS}$ commute with projective transformations, there are induced rational self-maps on the moduli spaces. The induced map on the quotient $\mc{C}_{8,\RS}$ necessarily agrees with $\ovl{T}$, so $\ovl{T}_{\RS}$ is the quotient map.
\end{proof}

Let $f: \PP^2 \dashrightarrow \PP^2$ be the projectivization of $\ovl{T}_{\RS}$ via $\iota$, forming the diagram
\begin{center}
\begin{tikzcd}
\mc{C}_{8,\RS} \arrow[d,hook, "\iota"] \arrow[r,dashed,"\ovl{T}_{\RS}"] & \mc{C}_{8,\RS} \arrow[d, hook, "\iota"] \\
 \PP^2 \arrow[r,dashed,"f"] & \PP^2.
\end{tikzcd}
\end{center}

\begin{prop} \label{prop_formula}
The map $f:\PP^2 \dashrightarrow \PP^2$ is given in homogeneous coordinates by the formula
     $$f([X:Y:Z]) = [f_0([X:Y:Z]), f_1([X:Y:Z]), f_2([X:Y:Z])],$$
     where
         \begin{align*}
        f_0 &= (X + Y + Z) ( (X - Y + Z)(-XYZ - X^2 Z + X^3 + X Y^2 ) \\ 
        & \hspace{0.4in} + 2 Y (X^2 Y + 2 X^2 Z + Y^3 + Y^2 Z - XYZ) ),
        \\
        f_1 &= (X + Y + Z) ( (-2Y)(-XYZ - X^2 Z + X^3 + X Y^2 ) \\ 
        & \hspace{0.4in} + (X - Y + Z)(X^2 Y + 2 X^2 Z + Y^3 + Y^2 Z - XYZ) ),
        \\
        f_2 &= (X - (1 + 2i)Y + Z) (X - (1 - 2i)Y + Z)(X^2 + Y^2 + X Z + Y Z) Z.
    \end{align*}
\end{prop}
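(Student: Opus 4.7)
The plan is to compute $f$ directly by carrying out the normalization procedure underlying the definition of $\mc{C}_{8,\RS}$. A normalized rotationally $4$-symmetric octagon $v$ has $v_1 = [1:0:1]$, $v_3 = [0:1:1]$, $v_5 = [-1:0:1]$, $v_7 = [0:-1:1]$, and $v_2 = [X:Y:Z]$; the remaining even-indexed vertices are then $v_4 = Rv_2 = [-Y:X:Z]$, $v_6 = [-X:-Y:Z]$, and $v_8 = [Y:-X:Z]$. Since $T$ commutes with $R$, the output $v' = Tv$ is also rotationally $4$-symmetric under $R$, and so is determined by $v'_1$ and $v'_2$. Using the cross-product formulas \eqref{eq_two_point_line} and \eqref{eq_two_line_point}, a direct computation gives
\[v'_1 = [X - Y + Z : 2Y : X + Y + Z],\]
\[v'_2 = [X(X^2+Y^2-XZ-YZ) : X^2Y + 2X^2 Z + Y^3 + Y^2 Z - XYZ : Z(X^2+Y^2+XZ+YZ)].\]

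To renormalize, I need $A \in \PGL_3$ sending $v'_1, v'_3, v'_5, v'_7$ to $[1:0:1], [0:1:1], [-1:0:1], [0:-1:1]$ respectively; then $A v'_2$ yields the new parameter. The trick that makes this tractable is to identify $\A^2 \cong \C$ via $(x,y) \mapsto x + iy$, so that $R$ acts on the affine patch as multiplication by $i$. A short computation shows that every $A \in \PGL_3$ commuting with $R$ preserves the line at infinity, fixes the origin, and acts on $\A^2 \cong \C$ as multiplication by a nonzero complex number. Since $v'$ is rotationally symmetric, the conditions $A v'_5 = -A v'_1$ and $A v'_7 = -A v'_3$ are automatic for such $A$, so it suffices to require $A v'_1 = 1$. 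This uniquely determines $A$ as multiplication by $1/v'_1$ (viewed as a complex number), and the new parameter satisfies $x' + iy' = v'_2/v'_1$ as complex numbers.

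Writing $v'_1 = [A_1 : B_1 : C_1]$ and $v'_2 = [A_2 : B_2 : C_2]$, the identity $z/w = z\bar{w}/|w|^2$ in $\C$ translates into the projective formula
\[f = [(A_2 A_1 + B_2 B_1)\, C_1 \; : \; (B_2 A_1 - A_2 B_1)\, C_1 \; : \; (A_1^2 + B_1^2)\, C_2].\]
Substituting the computed coordinates, the prefactor $C_1 = X+Y+Z$ appears in the first two components, and after collecting terms the stated formulas for $f_0$ and $f_1$ follow. The third component becomes $[(X-Y+Z)^2 + 4Y^2] \cdot Z(X^2+Y^2+XZ+YZ)$, and the quadratic $(X-Y+Z)^2 + 4Y^2$ splits over $\C$ as $(X-(1+2i)Y+Z)(X-(1-2i)Y+Z)$, matching $f_2$. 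Finally, evaluating $f_0$ at $Z = 0$ and $f_2$ at $X+Y+Z = 0$ yields nonzero polynomials, so neither $Z$ nor $X+Y+Z$ is a common factor of $f_0, f_1, f_2$; no other candidate common factor is visible in the factored forms, so the formula is written in lowest terms.

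The main obstacle is purely the bookkeeping involved in the cross-product computation of $v'_2$ and in the expansion of the complex division $v'_2/v'_1$; the identification of $\A^2$ with $\C$ is what makes the renormalization step compact enough to carry out by hand without symbolic algebra.
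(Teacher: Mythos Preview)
Your proof is correct and follows the same overall strategy as the paper: compute $v'_1$ and $v'_2$ via cross products, then renormalize using the fact that the normalizing transformation commutes with $R$. The paper writes this transformation explicitly as a matrix $\begin{pmatrix} P_1 & -P_2 & 0 \\ P_2 & P_1 & 0 \\ 0 & 0 & P_3 \end{pmatrix}$ and solves $Mv'_1 = v_1$ for $P_1,P_2,P_3$, whereas you package the same linear algebra via the identification $\A^2 \cong \C$, reducing the renormalization to the single complex division $v'_2/v'_1$. Both routes produce identical formulas; your complex-number bookkeeping is a pleasant shortcut but not a genuinely different argument. One small note: your closing remark about lowest terms is not part of the proposition as stated (the paper defers that to the subsequent lemma), and the phrase ``no other candidate common factor is visible'' is not quite a proof, but this does not affect the correctness of the computation itself.
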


\begin{proof}
Let $v \in \mc{C}_{8,\RS}$ be general, identify $v$ with its normalized representative, and let $v' = Tv$. Since $v'$ is in general linear position, there is a unique projective transformation $M$ such that $Mv'$ is normalized.
Let us calculate the formula of the rational map $f$ that takes $v_2$ to the normalized second vertex of $v'$.

By explicit calculation using the formula \eqref{eq_two_point_line}, we obtain
\begin{align*}
    v'_1 = [& X -Y + Z : 2Y : X + Y + Z],\\
    v'_2 = [& -XYZ - X^2 Z + X^3 + XY^2 : X^2 Y + 2 X^2 Z + Y^3 + Y^2 Z - XYZ \\
    &: X^2 Z + XZ^2 + Y^2 Z + YZ^2].
\end{align*}
Notice that $M$ and the standard rotation $R$ commute, since they are projective transformations with the same action on the $4$-tuple $(v'_1, v'_3, v'_5, v'_7)$. It follows that the matrix of $M$ is of the form
    \[
    \begin{pmatrix}
        P_1 & -P_2 & 0 \\
        P_2 & P_1 & 0 \\
        0 & 0 & P_3 \\
    \end{pmatrix}
    \]
    where $P_1, P_2, P_3$ are homogeneous polynomials in $X, Y, Z$. 
    Solving the system of equations $Mv'_1 = v_1$ for $P_1, P_2, P_3$, we get
    \begin{align*}
        P_1 &= (X + Y + Z)(X - Y + Z),\\
        P_2 &= -2Y(X + Y + Z),\\
        P_3 &= (X - (1 + 2i)Y + Z)(X - (1 - 2i)Y + Z).
    \end{align*}
\end{proof}

The next statement collects some basic geometric information about $f$ that can be assessed using a computer algebra system. Supporting calculations in SageMath \cite{sage} are available at \cite{code}.

We give names to the three fixed points of the standard rotation $R$:
$$p_1 = [0 : 0 : 1], \quad p_2 = [1 : i : 0], \quad p_3 = [1 : - i : 0].$$ 
\begin{lemma} \label{lem_basic} \leavevmode
\begin{enumerate}
    \item The rational map $f$ is dominant. \label{it_lem_basic_non_dominant}
    \item The rational map $f$ has algebraic degree $5$. \label{it_lem_basic_alg_deg}
    \item The contracted curves of $f$ are
    \begin{align*}
        C_1 :& \hspace{0.2in} X + Y + Z = 0,\\
        C_2 :& \hspace{0.2in} X - (1 + 2i)Y + Z = 0,\\
        C_3 :& \hspace{0.2in} X - (1 - 2i)Y + Z = 0.
    \end{align*}
    We have 
    $$f(C_1 \smallsetminus \Ind f) = p_1,\quad f(C_2 \smallsetminus \Ind f) = p_2, \quad f(C_3 \smallsetminus \Ind f) = p_3.$$ \label{it_lem_basic_contracted}
    \item There are $12$ indeterminacy points of $f$. These include $p_1, p_2, p_3$, so $f$ is not algebraically stable.
    \item The generic topological degree of $f$ is $5$. \label{it_lem_basic_top_deg}
\end{enumerate}
\end{lemma}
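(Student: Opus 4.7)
The lemma is a symbolic verification based on the explicit formula in Proposition \ref{prop_formula}, and the individual checks are best carried out in a computer algebra system.

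For (1), by Lemma \ref{lem_homog_jacobian} it suffices to note that the Jacobian $Df$ is visibly not identically zero. For (2), each component $f_j$ expands to a homogeneous polynomial of degree $5$, so $\deg f \leq 5$; equality follows once we check $\gcd(f_0, f_1, f_2) = 1$. The only obvious shared factor, namely $(X + Y + Z)\mid f_0, f_1$, fails to divide $f_2$, as substituting $X = -Y - Z$ into $f_2$ yields a nonzero polynomial in $Y, Z$. A direct gcd computation then rules out any other common factor.

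For (3), Lemma \ref{lem_contracted_vs_det_DF} confines contracted curves to the vanishing locus of $\det Df$. I would compute and factor $\det Df$ explicitly, producing a short list of candidate contracted curves, then verify each by parametrization. The three curves $C_1, C_2, C_3$ fall out cleanly from the formula: on $C_1$, the common factor $(X+Y+Z)$ in $f_0$ and $f_1$ together with nonvanishing of $f_2$ give $f(C_1) = [0:0:1] = p_1$; on $C_2$ and $C_3$, both inner cubics $-XYZ - X^2Z + X^3 + XY^2$ and $X^2Y + 2X^2Z + Y^3 + Y^2Z - XYZ$ happen to vanish at the isotropic points $p_2 = [1:i:0]$ and $p_3 = [1:-i:0]$, which forces $f_0$ and $f_1$ to vanish along those lines and identifies the images $p_2, p_3$ from the surviving linear/quadratic factors of $f_2$.

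For (4), the indeterminacy locus $V(f_0, f_1, f_2)$ is zero-dimensional by part (2), and a Groebner basis computation over $\Q(i)$ yields exactly $12$ distinct points. One verifies $p_1, p_2, p_3 \in \Ind f$ by direct substitution, reusing the cubic and factor identities from part (3). Since $f(C_j) = p_j \in \Ind f$, the composition criterion underlying Lemma \ref{lem_as_no_ctrcted} fails and $f$ cannot be algebraically stable. For (5), I would fix a numerical $q \in \PP^2$ avoiding the images of $\Ind f$ and of the contracted curves, then solve the system $q_1 f_0 - q_0 f_1 = q_2 f_0 - q_0 f_2 = 0$ by Groebner elimination; the count of solutions is the topological degree. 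Bezout's theorem gives the crude upper bound $(\deg f)^2 = 25$, but the $12$ base points absorb intersection multiplicity totalling $20$, leaving topological degree $5$. The main obstacle is the multiplicity bookkeeping at each base point, which the direct numerical fiber computation bypasses entirely and confirms the value $5$.
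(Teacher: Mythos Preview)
Your overall strategy matches the paper's: Jacobian criterion for dominance, factor $\det Df$ to bound contracted curves, parametrize to verify, Gr\"obner/B\'ezout for indeterminacy and topological degree. Two points deserve correction.

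First, your argument for $C_2, C_3$ is wrong as stated. The cubics $A = -XYZ - X^2Z + X^3 + XY^2$ and $B = X^2Y + 2X^2Z + Y^3 + Y^2Z - XYZ$ do vanish at the points $p_2, p_3$, but this does \emph{not} force $f_0, f_1$ to vanish along the lines $C_2, C_3$; indeed they cannot, since $f_2$ already vanishes there and the indeterminacy locus is finite. What actually happens on $C_2$ is that $X - Y + Z = 2iY$, so the combinations $(X-Y+Z)A + 2YB$ and $-2YA + (X-Y+Z)B$ become $2iYA + 2YB$ and $-2YA + 2iYB$, which are proportional in ratio $[1:i]$; together with $f_2|_{C_2} = 0$ this gives $f(C_2 \smallsetminus \Ind f) = [1:i:0] = p_2$. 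Your parametrization plan would discover this anyway, but the written justification should be fixed.

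Second, the paper's factorization of $\det Df$ produces a fourth irreducible component $C_4$ of degree $9$, which must be shown \emph{not} to be contracted. The paper handles this not by parametrization (which would be awkward for a high-degree curve) but by exhibiting two explicit points $[1:0:1], [0:1:1] \in C_4$ with distinct images under $f$; these points come from a $2$-cycle identified via a geometric argument in the alternative proof of dominance. Your outline should anticipate an extra factor and indicate how you would dispose of it.
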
 

\begin{proof}
\par(1)\enspace Applying Lemma \ref{lem_homog_jacobian}, since $\det Df \neq 0$, the map $f$ is dominant. Alternatively, we may use the following geometric argument, which we will also use in the proof of (3). The image of $f$ is irreducible, so it suffices to show that the image contains a curve and a point not on that curve. Let $L$ be the line defined by $X + Y = Z$. General points in $L$ correspond to normalized $8$-gons where $v_2 \in \ovl{v_1 v_3}$. Let $v' = Tv$. Computing directly from the definition of $T$ shows that $v'_2 \in \ovl{v'_1 v'_3}$, so $f(L \smallsetminus \Ind f) \subset L$. To see that this image is nonconstant, observe that the normalized $8$-gon $v$ with $v_1 = v_2$ is in the domain of $T$ and $v'_2 = v'_3$, and similarly, the normalized $8$-gon with $v_2 = v_3$ has $v'_1 = v'_2$. It follows that $f$ has a $2$-cycle
\begin{equation}
[1:0:1] \xmapsto{f} [0:1:1] \xmapsto{f} [1:0:1]. \label{eq_two_cycle}
\end{equation}
These points are in $L$, so $f$ generically maps $L$ onto $L$. The regular $8$-gon is fixed up to scale by $T$, so
$$[\sqrt{2}:\sqrt{2}:2] \xmapsto{f} [\sqrt{2}:\sqrt{2}:2],$$
providing a point outside $L$ in the image of $f$.

\par(2)\enspace
Proposition \ref{prop_formula} expresses $f$ with homogeneous polynomials $f_0,f_1,f_2$ of degree $5$. These polynomials have no simultaneous common factor, so $\deg f = 5$.

\par(3)\enspace 
If $C$ is a contracted curve of $f$, then $\det Df$ vanishes along $C$. So the contracted curves are among the irreducible components of the zero locus of $\det Df$. With computer algebra, we find the irreducible decomposition
$$\{ \det Df = 0\} = C_1 \cup C_2 \cup C_3 \cup C_4,$$
where $C_1, C_2, C_3$ are as written in the theorem statement and 
{\small
\begin{align*}
C_4 : 0 =  &-X^{9} - 3 X^{8} Y - 2 X^{7} Y^{2} - 6 X^{6} Y^{3} + 2 X^{3} Y^{6} + 6 X^{2} Y^{7} + X Y^{8} \\
&+ 3 Y^{9} + 2 X^{8} Z - 22 X^{7} Y Z - 4 X^{6} Y^{2} Z - 58 X^{5} Y^{3} Z - 20 X^{4} Y^{4} Z \\
&- 50 X^{3} Y^{5} Z - 20 X^{2} Y^{6} Z - 14 X Y^{7} Z - 6 Y^{8} Z + 13 X^{7} Z^{2} - 25 X^{6} Y Z^{2} \\
&+ 9 X^{5} Y^{2} Z^{2} - 93 X^{4} Y^{3} Z^{2} - 25 X^{3} Y^{4} Z^{2} - 83 X^{2} Y^{5} Z^{2} \\
&- 21 X Y^{6} Z^{2} - 15 Y^{7} Z^{2} + 4 X^{6} Z^{3} - 12 X^{5} Y Z^{3} - 36 X^{4} Y^{2} Z^{3} \\
&+ 16 X^{3} Y^{3} Z^{3} - 36 X^{2} Y^{4} Z^{3} - 12 X Y^{5} Z^{3} + 4 Y^{6} Z^{3} - 15 X^{5} Z^{4} \\
&- 21 X^{4} Y Z^{4} - 68 X^{3} Y^{2} Z^{4} - 4 X^{2} Y^{3} Z^{4} - 25 X Y^{4} Z^{4} + 13 Y^{5} Z^{4}\\
&- 6 X^{4} Z^{5} - 14 X^{3} Y Z^{5} - 8 X^{2} Y^{2} Z^{5} - 22 X Y^{3} Z^{5} + 2 Y^{4} Z^{5} \\
&+ 3 X^{3} Z^{6} + X^{2} Y Z^{6} - 3 X Y^{2} Z^{6} - Y^{3} Z^{6}.
\end{align*}}
The curve $C_4$ is not contracted, because $C_4$ contains $[1:0:1]$ and $[0:1:1]$, which have distinct images by \eqref{eq_two_cycle}. This limits the possible contracted curves to $C_1, C_2, C_3$.

To see that the lines $C_1,C_2,C_3$ are indeed contracted and have the claimed images, we may parametrize each and apply $f$. For instance, to see that $f(C_2 \smallsetminus \Ind f) = [1:i:0]$, we may parameterize $C_2$ by $r([Y:Z]) = [(1+2i)Y - Z:Y:Z]$ and compute directly that $f \circ r$ is the constant map with image $[1:i:0]$. 

\par \enspace (4) The indeterminacy locus is the locus where $f_1,f_2,f_3$ simultaneously vanish. Calculating this locus is a routine computer calculation.

\par \enspace (5) We apply a standard method, following the calculation of the topological degree of the heat map on pentagons by Schwartz \cite[Chapter 8]{MR3642552}.

The generic topological degree $\deg_{\Top} (f)$ is the maximum cardinality of the set $(f|_{\PP^2 \smallsetminus \Ind f})^{-1}(p)$ over all $p \in \PP^2 \smallsetminus \{p_1, p_2, p_3\}$. To compute $(f|_{\PP^2 \smallsetminus \Ind f})^{-1}(p)$, we choose distinct hyperplanes $H_1, H_2$ through $p$. We compute the intersection $f^* H_1 \cdot f^* H_2$, then remove points in $\Ind f$, which is the simultaneous vanishing locus of $f_0,f_1,f_2$. 

For each $p$, the above calculation gives a lower bound on $\deg_{\Top}(f)$. Taking $p = [1:1:1]$, we may check (e.g. by computer) that
$$\#(f|_{\PP^2 \smallsetminus \Ind f})^{-1}(p) = 5.$$
Now we show that $5$ is also an upper bound. Let $H_1, H_2$ be generic hyperplanes. By B\'ezout's theorem, since $f^* H_1$ and $f^* H_2$ are degree $5$ and share no component, they intersect with multiplicity $25$. Let $(f^* H_1 \cdot f^* H_2)_q$ denote the intersection multiplicity at $q \in \PP^2$. We claim that 
$$\sum_{q \in \Ind f} (f^* H_1 \cdot f^* H_2)_q \geq 20,$$ 
so there are at most $5$ points in $(f|_{\PP^2 \smallsetminus \Ind f})^{-1}(H_1 \cap H_2)$.
In fact, we claim that the following table describes the multiplicity of $(f^* H_1 \cdot f^* H_2)_q$ at each indeterminacy point $q$.
\begin{center}
\begin{tabular}{c|c}
    $q \in \Ind f$ & $(f^* H_1 \cdot f^* H_2)_q$ \\ \hline
    $[0:0:1]$ & $2$ \\
    $[-1:0:1]$ & $5$ \\
    $[1:i:0]$ & $2$ \\
    $[1:-i:0]$ & $2$ \\
    $[0:-1:1]$ & $2$ \\
    else & $1$
\end{tabular}
\end{center}
These calculations are routine but cumbersome. In light of the lower bound of $5$ on the generic topological degree,
$$\sum_{q \in \Ind f} (f^* H_1 \cdot f^* H_2)_q \leq 20,$$
so we need only check that the multiplicity at each of these points is at least the advertised value. The set $\Ind f$ is contained in the support of $f^* H_1 \cdot f^* H_2$, and so $f^* H_1 \cdot f^* H_2$ has multiplicity at least $1$ at each indeterminacy point. There are $12$ indeterminacy points. To confirm the higher multiplicities, we calculate the degree of the homogeneous ideal generated by $f^* H_1, f^*H_2$ and sufficiently high powers of the maximal ideal at each $q$.
\end{proof}

\begin{cor} \label{cor_unbounded_top}
    There is no uniform bound on the topological degree of a truly skew pentagram map $T^{(n)}_{a,b,c,d}$.
\end{cor}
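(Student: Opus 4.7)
The plan is to combine Lemma~\ref{lem_basic}(\ref{it_lem_basic_top_deg}) with the Cartesian power identity of Observation~\ref{obs_cart_power}. For each $k \geq 1$, the quadruple $(0, 2k, k, 4k)$ is distinct modulo $8k$ and truly skew since $|2k - 0| = 2k \neq 3k = |4k - k|$. By Observation~\ref{obs_cart_power}, the map $T^{(8k)}_{0, 2k, k, 4k}$ on $(\PP^{2})^{8k}$ is conjugate to the $k$-fold Cartesian power of $T^{(8)}_{0,2,1,4}$ on $(\PP^{2})^{8}$, and since the generic fiber of a Cartesian product is the product of the generic fibers of its factors, topological degree is multiplicative:
\[ \deg_{\Top}\bigl(T^{(8k)}_{0, 2k, k, 4k}\bigr) \;=\; \bigl(\deg_{\Top}(T^{(8)}_{0,2,1,4})\bigr)^{k}. \]
It therefore suffices to exhibit one skew pentagram map on closed polygons of topological degree at least two, and I would aim for the stronger bound $\deg_{\Top}(T^{(8)}_{0,2,1,4}) \geq 5$.

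To obtain this, I would start from Lemma~\ref{lem_basic}(\ref{it_lem_basic_top_deg}), which asserts $\deg_{\Top}(f) = 5$. Since $f$ is a birational model of $\ovl{T}_{\RS}$, this yields $\deg_{\Top}(\ovl{T}_{\RS}) = 5$; and since $T^{(8)}_{0,2,1,4}$ is $\PGL_{3}$-equivariant with generically free action on $(\PP^{2})^{8}_{\GLP}$, the quotient has trivial relative topological degree on generic orbits, so $\deg_{\Top}\bigl(T^{(8)}_{0,2,1,4}|_{(\PP^{2})^{8}_{\RS}}\bigr) = 5$. To translate this into a bound on the ambient map, I would exploit that $T^{(8)}_{0,2,1,4}$ commutes with the twisted shift $\Sigma_{2} \circ R^{-1}$ for each $R \in \PGL_{3}$ of order four, whose fixed locus is precisely the subvariety of octagons rotationally symmetric with rotation $R$, a $\PGL_{3}$-translate of $(\PP^{2})^{8}_{\RS}$. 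For a generic $R$-symmetric output $v'$, the preimage set $T^{-1}(v')$ in $(\PP^{2})^{8}$ is a $\Z/4\Z$-set under $\Sigma_{2} \circ R^{-1}$, the five symmetric preimages computed above are precisely the fixed points of this action, and any further preimages occur in free $\Z/4\Z$-orbits. Upper semi-continuity of fiber cardinality for dominant rational maps of smooth projective varieties of equal dimension then gives $\deg_{\Top}(T^{(8)}_{0,2,1,4}) \geq |T^{-1}(v')| \geq 5$, provided $T^{-1}(v')$ is finite.

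The main obstacle is verifying this finiteness: a priori, the positive-codimension subvariety $(\PP^{2})^{8}_{\RS}$ could sit entirely inside the locus of $(\PP^{2})^{8}$ over which $T^{(8)}_{0,2,1,4}$ has positive-dimensional fibers. Ruling this out requires a direct analysis of the contracted subvarieties of $T^{(8)}_{0,2,1,4}$; the $\Z/4\Z$-equivariance is again useful, since a positive-dimensional family of preimages over a generic symmetric $v'$ would decompose into $\Z/4\Z$-orbits, each of which is either contained in $(\PP^{2})^{8}_{\RS}$—impossible because $T^{(8)}_{0,2,1,4}|_{(\PP^{2})^{8}_{\RS}}$ has finite generic fibers of size five—or is a free orbit off the symmetric locus. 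Excluding the latter by inspection of the low-degree equations defining $T^{(8)}_{0,2,1,4}$ completes the argument, after which the Cartesian power identity immediately produces the required unbounded family of truly skew pentagram maps.
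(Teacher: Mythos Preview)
Your strategy---manufacture an unbounded family via the Cartesian-power identity of Observation~\ref{obs_cart_power} applied to $T^{(8)}_{0,2,1,4}$, using multiplicativity of topological degree---is exactly the paper's. The paper's proof is a single line: it asserts $\deg_{\Top}(T^{(8)}_{0,2,1,4})\ge 5$ by citing Lemma~\ref{lem_basic}(\ref{it_lem_basic_top_deg}) and concludes $\deg_{\Top}(T^{(8n)}_{0,2n,n,4n})\ge 5^n$.

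Where you diverge is in trying to justify carefully the passage from $\deg_{\Top}(f)=5$ on the two-dimensional model to $\deg_{\Top}(T^{(8)}_{0,2,1,4})\ge 5$ on the sixteen-dimensional space; the paper simply takes this for granted. Your justification, however, has genuine problems. First, the inequality $\deg_{\Top}(T^{(8)}_{0,2,1,4})\ge |T^{-1}(v')|$ is not ``upper semicontinuity of fiber cardinality''---upper semicontinuity would give the reverse inequality at a special point---and the correct statement (finite fibers of a proper generically-finite morphism have at most $\deg_{\Top}$ points, via Stein factorization) only applies once you know the fiber of a \emph{resolution} $\tilde{T}\to(\PP^2)^8$ over $v'$ is zero-dimensional; this is strictly stronger than finiteness of $T^{-1}(v')$ in the domain of definition of the rational map. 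Second, you correctly flag exactly this obstruction in your final paragraph but then dispose of it with ``excluding the latter by inspection of the low-degree equations,'' which is not an argument: ruling out a positive-dimensional family of non-symmetric preimages over a generic symmetric $v'$ is a real computation, and the $\Z/4\Z$-equivariance by itself does not exclude free $\Z/4\Z$-orbits sitting inside such a family. So your proposal reproduces the paper's skeleton, identifies a step the paper leaves implicit, but does not succeed in filling it.
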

\begin{proof}
    For all $n$, we have $\deg_{\Top} (T^{(8n)}_{0,2n,n,4n}) = (\deg_{\Top} (T^{(8)}_{0,2,1,4}))^n \geq 5^n$ by Lemma \ref{lem_basic} (\ref{it_lem_basic_top_deg}).
\end{proof}

\begin{question} 
Is there a convenient formula for $\deg_{\Top} (T^{(n)}_{a,b,c,d})$ in terms of $n, a, b, c, d$?
\end{question}

\begin{remark}
Geometrically, the contracted curves arise when the normalizing rotation $M$ in the proof of Proposition \ref{prop_formula} degenerates to a constant map.
\end{remark}

\begin{remark}
    The proof of Lemma \ref{lem_basic} (\ref{it_lem_basic_non_dominant}) showed that $f$ maps the line $L$ defined by $X + Y = Z$ onto itself. It is easy to confirm that $\deg_{\Top}(f|_L) = 2$, so $\deg_{\Top}(f) \geq 2$. Similarly, the line defined by $Z = 0$ is mapped onto itself $2$-to-$1$. Applying Lemma \ref{lem_subsystem} to either line then gives a quick proof that $\lambda_1(f) \geq 2$. 
\end{remark}

\begin{remark}
    In general, surface maps $g$ obey $\lambda_1(g)\geq \sqrt{\deg_{\Top}(g)}$ by log concavity of dynamical degrees, see e.g. \cite{MR4048444}. So Lemma \ref{lem_basic} shows that $\lambda_1(f) \geq \sqrt{5}$.
\end{remark}

\subsection{The dynamical degree}
As indicated in the above remarks, some exponential growth properties of this skew pentagram map  are already clear from basic geometric information about $f$. But our goal is to show that some skew pentagram maps $T_{a,b,c,d}$ satisfy $\lambda_1(T_{a,b,c,d}) = \deg T_{a,b,c,d} = 4$, which we interpret as ``maximal entropy'', rather than just positive entropy. In this subsection, we show that $\lambda_1(f) = 4$ by constructing an algebraically stable model. Then we use $\lambda_1(f)$ to show that $\lambda_1(T_{a,b,c,d}) = 4$ for any parameters $(a,b,c,d)$ that induce $\bar{T}_{\RS}$.

One may check directly that $p_1, p_2, p_3 \in \Ind f$, so all three contracted curves of $f$ witness the failure of algebraic stability. It turns out that we may resolve the failure of algebraic stability in the simplest way possible given this data -- by blowing up $\PP^2$ at $p_1, p_2, p_3$. For background on surface blowups, see e.g. \cite[Chapter V]{Hartshorne}.

Let $\pi: \mc{X} \to \PP^2$ be the blowup of $\PP^2$ at $p_1$, $p_2$, $p_3$. The exceptional divisors are denoted $E_1$, $E_2$, $E_3$, respectively. 

Let $\hat{f} : \mc{X} \dashrightarrow \mc{X}$ be the birational conjugate of $f$ to $\mc{X}$ by $\pi$, so we have a commutative diagram
\begin{equation}
    \begin{tikzcd}
\mathcal{X} \arrow[r, "\hat{f}", dashed] \arrow[d, "\pi"'] & \mathcal{X} \arrow[d, "\pi"] \\
\mathbb{P}^2 \arrow[r, "f"', dashed]                       & \mathbb{P}^2                
\end{tikzcd}
\label{eq_def_f_stable}
\end{equation}

The strict transform of any given curve $C \subset \PP^2$ to $\mc{X}$ is denoted $\hat{C}$.

\begin{lemma}
The rational map $\hat{f} : \mc{X} \dashrightarrow \mc{X}$ has no contracted curves, hence is algebraically stable.
\end{lemma}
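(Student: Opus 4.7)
The plan is to enumerate candidates for contracted curves of $\hat{f}$ and check each one directly. Any irreducible curve on $\mathcal{X}$ is either the strict transform of an irreducible curve in $\PP^2$, or one of the exceptional divisors $E_1, E_2, E_3$. If the strict transform $\hat{D}$ of a curve $D \subset \PP^2$ is contracted by $\hat{f}$, then by commutativity of \eqref{eq_def_f_stable} and the fact that $\pi$ is an isomorphism off the exceptional divisors, $D$ must itself be contracted by $f$. By Lemma \ref{lem_basic} (\ref{it_lem_basic_contracted}), this forces $D \in \{C_1, C_2, C_3\}$. So the only candidates for contracted curves of $\hat{f}$ are the six curves $\hat{C}_1, \hat{C}_2, \hat{C}_3, E_1, E_2, E_3$.

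For each strict transform $\hat{C}_i$, I would argue as follows. From \eqref{eq_def_f_stable} and Lemma \ref{lem_basic} (\ref{it_lem_basic_contracted}), we have $\hat{f}(\hat{C}_i \smallsetminus \Ind \hat{f}) \subseteq \pi^{-1}(p_i) = E_i$. To rule out contraction, pick a rational parametrization of $C_i$, substitute it into the formulas for $f_0, f_1, f_2$ from Proposition \ref{prop_formula}, factor out the common order of vanishing at $p_i$, and read off the limiting tangent direction of $f(C_i)$ at $p_i$ as a function of the parameter. If this tangent direction depends nontrivially on the parameter, then the induced map $\hat{f}|_{\hat{C}_i} : \hat{C}_i \dashrightarrow E_i$ is nonconstant, so $\hat{C}_i$ is not contracted.

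For each exceptional divisor $E_i$, I would work in the two standard blowup charts covering $E_i$, lift the formulas of Proposition \ref{prop_formula} to those charts, cancel the common factor coming from the exceptional divisor, and read off $\hat{f}|_{E_i}$ as a rational map. The goal is to verify the image is a curve rather than a single point. Geometrically, since $p_i \in \Ind f$, blowing up $p_i$ should precisely resolve the indeterminacy, with $\hat{f}|_{E_i}$ describing how different tangent directions of approach to $p_i$ in $\PP^2$ get sent to genuinely different points of $\mathcal{X}$. These six local calculations are routine but cumbersome, and the SageMath scripts at \cite{code} provide a natural venue for mechanical verification.

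Once none of the six candidates is contracted, $\hat{f}$ has no contracted curves, and Lemma \ref{lem_as_no_ctrcted} gives algebraic stability. The main obstacle is purely the bookkeeping in the blowup charts at $p_2$ and $p_3$, which lie on the line at infinity and hence require working in an affine chart of $\PP^2$ away from the natural patch containing $p_1 = [0:0:1]$; however, these computations parallel those at $p_1$ closely. A conceptual sanity check is that the choice of centers $p_1, p_2, p_3$ was forced: they are precisely the images of the contracted curves that also lie in $\Ind f$, so blowing them up is the minimal modification that could possibly absorb the failure of algebraic stability.
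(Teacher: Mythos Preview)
Your proposal is correct and follows essentially the same approach as the paper: reduce to the six candidate curves $\hat{C}_1,\hat{C}_2,\hat{C}_3,E_1,E_2,E_3$, then parametrize each and verify in suitable blowup charts that the image under $\hat{f}$ is nonconstant. The paper carries out exactly these local computations (summarized in a table), finding that each $\hat{C}_i$ maps onto $E_i$ and each $E_i$ maps onto itself, both with degree $2$.
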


\begin{proof}
    If $C$ is a contracted curve of $\hat{f}$, then either $C$ is an exceptional divisor of $\pi$, or $\pi(C)$ is a contracted curve of $f$. So the contracted curves of $\hat{f}$ are in the set $\mc{C} \colonequals \{E_1, E_2, E_3, \hat{C}_1, \hat{C}_2, \hat{C}_3\}$. No $C \in \mc{C}$ is contracted by $\hat{f}$. To see this, we observe that since each curve $C$ is rational, it suffices to parameterize each generically by a birational map $\gamma: \A^1 \dashrightarrow C$ and check that each composition $\hat{f} \circ \gamma$ has nonconstant image. Table \ref{table_contracted} summarizes the results of the check for each curve.

    For each $\hat{C}_i$, since $f(C_i \smallsetminus \Ind f) = p_i$, there is only one reasonable choice of local coordinate on the target: the affine patch containing $E_i$. For instance, to show $\hat{C}_1$ is not contracted, we let $\A^2$ be the main affine patch in $\PP^2$, and let $U = \A^2 \smallsetminus \{(0,0)\}$. Viewing $U$ as a subset of $\mc{X}$, the affine curve $\hat{C}_1|_U = C_1|_U$ is parametrized generically by $\gamma(t) = (t,-1-t)$. Then in coordinates $x$, $y/x$ on $\mc{X}$, the map $\hat{f} \circ \gamma : \A^1 \dashrightarrow \mc{X}$ is given by
    $$t \mapsto \left(0, \frac{t + 1}{2t^2 + t + 1} \right).$$
    So $\hat{f}$ generically maps $\hat{C}_1 \smallsetminus \Ind \hat{f}$ onto $E_1$, with degree $2$. 
    
    The local coordinates for the $E_i$ are less obvious, but trial and error with different local coordinate systems leads us to find that each is mapped generically onto itself $2$-to-$1$, as indicated in Table \ref{table_contracted}. The rows of $\hat{C}_3$ and $E_3$ follow from the rows of $\hat{C}_2$ and $E_2$ by complex conjugation. 
    \begin{table}[h]
        \centering 
        \def\arraystretch{2}
        \tiny{
        \begin{tabular}{c|c|c|c|c|c}
            $C$ & $U \subset \mc{X}$ & $\gamma(t)$  & Local coordinates  & $(\hat{f} \circ \gamma)(t)$ & $\ovl{f(C \smallsetminus \Ind f)} $\\
            &&&on target $\mc{X}$ & \\ \hline
            
            $\hat{C}_1$ & $x = X/Z, y = Y/Z $ & $(t,-1-t)$ & $x, y/x$ & $ \left(0,\frac{t + 1}{2t^2 + t + 1}\right)$ & $E_1$  \\
            & $(x,y) \neq (0,0)$ &&& & \\ \hline
            $\hat{C}_2$ & $x = X/Z, y = Y/Z $ & $((1+2i)t-1,t)$ & $Z/X, (Y - iX)/Z$ & $\left(0,\frac{{\left(6 i - 8\right) t^{3} + \left(-17 i - 9\right) t^{2} + \left(-3 i + 11\right) t + 2 i}}{\left(-2 i - 4\right) t^{2} + 2 t} \right)$ & $E_2$ \\
                        & $(x,y) \neq (0,0)$ &&&& \\\hline
            $\hat{C}_3$ & $x = X/Z, y = Y/Z $ & $((1-2i)t-1,t)$ & $Z/X, (Y + iX)/Z$ & $ \left(0,\frac{\left(-6 i - 8\right) t^{3} + \left(17 i - 9\right) t^{2} + \left(3 i + 11\right) t - 2 i}{\left(2 i - 4\right) t^{2} + 2 t} \right)$ & $E_3$ \\
                        & $(x,y) \neq (0,0)$ &&&& \\\hline
            $E_1$ & $x, y/x$ & $(0,t)$ & $x, y/x$ & $\left(0, \frac{-t - 1}{t^2 - t + 2}\right)$   & $E_1$
            \\ \hline
            $E_2$ & $Z/X, (Y - iX)/Z$ & $(0,t)$ & $Z/X, (Y-iX)/Z$ & $\left(0, \frac{\left(i - 1\right) t^{2} + \left(i - 2\right) t}{\left(-i + \right) t - i} \right)$ & $E_2$ 
            \\ \hline
            $E_3$ & $Z/X, (Y + iX)/Z$ & $(0,t)$ & $Z/X, (Y+iX)/Z$ & $\left(0, \frac{\left(- i - 1\right) t^{2} + \left(- i - 2\right) t}{\left(i + 1\right) t + i} \right)$ & $E_3$ 
            \\
        \end{tabular}
        }
        \caption{Verification that $\hat{f}: \mc{X} \dashrightarrow \mc{X}$ has no contracted curves.}
        \label{table_contracted}
    \end{table}
\end{proof}

Because $\hat{f}$ is algebraically stable, we can compute its dynamical degree by looking at the associated pullback homorphism $\hat{f}^*$. Our next lemma computes a matrix for $\hat{f}^*$.

Let $\Num \mc{X}$ denote the group of divisor classes on $\mc{X}$ up to numerical equivalence. As before, let $[H]$ be the hyperplane class in $\Num \PP^2$. Let $\hat{H}$ be the strict transform of $H$ in $\mc{X}$. We fix the choice of basis
$$\pi^* [H], \quad [E_1], \quad [E_2], \quad [E_3]$$
on $\Num \mc{X}$, identifying $\Num \mc{X} \cong \Z^4$.

\begin{lemma} \label{lem_ac_on_coh}
    The pullback homomorphism $\hat{f}^*: \Num \mc{X} \to \Num \mc{X}$ has matrix
    \begin{equation}
    \hat{f}^* = \begin{pmatrix}
        5 & 1 & 1 & 1 \\
        -1 & 1 & 0 & 0 \\
        -1 & 0 & 1 & 0 \\
        -1 & 0 & 0 & 1
    \end{pmatrix}.
    \label{eq_fhatstar}
    \end{equation}
    in the basis $\pi^*[H], [E_1], [E_2], [E_3]$.
\end{lemma}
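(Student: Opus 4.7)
The plan is to compute each column of $\hat{f}^*$ separately. For each basis divisor $D \in \{\pi^*[H], [E_1], [E_2], [E_3]\}$, I would identify the irreducible components of $\hat{f}^{(-1)}(D)$ and their multiplicities using Lemma \ref{lem_compute_pback}, then express the result in the chosen basis.

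For the first column $\hat{f}^*\pi^*[H]$: take a generic line $L \subset \PP^2$, whose strict transform $\hat{L} \subset \mathcal{X}$ represents $\pi^*[H]$. Since $\hat{f}$ sends each $E_j$ into $E_j$ (Table \ref{table_contracted}) and $\hat{L}$ avoids every $E_j$, no exceptional divisor appears in $\hat{f}^{(-1)}(\hat{L})$; thus $\hat{f}^*\hat{L}$ equals the class of the strict transform $\widehat{f^*L}$, which is $5\,\pi^*[H] - \sum_i m_i\,[E_i]$, where $m_i$ is the multiplicity of $f^*L$ at $p_i$ and $\deg f = 5$ by Lemma \ref{lem_basic}(\ref{it_lem_basic_alg_deg}). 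To compute each $m_i$, Taylor-expand $f_0, f_1, f_2$ from Proposition \ref{prop_formula} in an affine chart around $p_i$: in each case exactly one component vanishes to order $1$ while the other two vanish to order $\geq 2$, so a generic combination $\alpha f_0 + \beta f_1 + \gamma f_2$ (defining $f^*L$) vanishes to order $1$. Hence $m_i = 1$ for all $i$, giving the first column $(5, -1, -1, -1)^T$.

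For the exceptional columns $\hat{f}^*[E_i]$: the only irreducible curves $B \subset \mathcal{X}$ with $\hat{f}(B)$ generically in $E_i$ are those whose image under $\pi$ is either $\{p_i\}$ or the contracted curve $C_i$, forcing $B \in \{E_i, \hat{C}_i\}$ by Table \ref{table_contracted}. Thus $\hat{f}^*[E_i] = m_{E_i}[E_i] + m_{\hat{C}_i}[\hat{C}_i]$. The leading-order expansions in Table \ref{table_contracted} give $m_{E_i} = 1$ directly (the component of $\hat{f}$ cutting out $E_i$ in the target is linear in the source coordinate cutting out $E_i$). For $m_{\hat{C}_i}$, choose a local equation $\psi$ for $E_i$ in the target so that $\psi \circ \hat{f}$ is expressible as $f_k/f_\ell$ with $f_\ell$ nonvanishing on $C_i$; by inspection of Proposition \ref{prop_formula}, the numerator $f_k$ contains the defining linear form of $C_i$ as a factor with multiplicity $1$ (namely $(X+Y+Z) \mid f_0, f_1$ for $C_1$, and $(X - (1\pm 2i)Y + Z) \mid f_2$ for $C_2, C_3$), while no remaining factor vanishes on $C_i$. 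Hence $m_{\hat{C}_i} = 1$. Finally, each $C_i$ is a line avoiding $p_1, p_2, p_3$, so $[\hat{C}_i] = \pi^*[H]$, and $\hat{f}^*[E_i] = \pi^*[H] + [E_i]$, matching the remaining columns of \eqref{eq_fhatstar}.

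The main obstacle is the multiplicity bookkeeping at indeterminacy points: one must verify that at each $p_i$ the components $f_0, f_1, f_2$ have the claimed orders of vanishing, and that no secondary factor in $f_k$ vanishes identically on $C_i$ (which would inflate $m_{\hat{C}_i}$ or $m_i$). These are routine but chart-dependent checks, most efficiently organized in the same coordinate charts as Table \ref{table_contracted}, using the explicit factorizations visible in Proposition \ref{prop_formula}.
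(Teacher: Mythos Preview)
Your proposal is correct and follows essentially the same approach as the paper: compute each column via Lemma~\ref{lem_compute_pback}, identify the components of $\hat{f}^{(-1)}$ of each basis divisor using Table~\ref{table_contracted} and Lemma~\ref{lem_basic}(\ref{it_lem_basic_contracted}), read off multiplicities from the explicit factorizations in Proposition~\ref{prop_formula}, and then rewrite $[\hat{C}_i] = \pi^*[H]$ since each $C_i$ misses the blown-up points. The paper organizes the first column slightly differently (arguing via $p_i \in \Ind f$ that $\hat{f}^*\pi^*[H]$ is the strict transform of $f^*H$, rather than via $\hat{f}(E_j)\subset E_j$), and phrases the multiplicity checks as ``$\leq 1$'' bounds that are automatically sharp since the curves lie in the preimage, but these are cosmetic differences.
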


\begin{proof} 
We apply Lemma \ref{lem_compute_pback} to compute the pullback associated to this surface map. Compare \cite[Lemma 4.1]{KR}.

\boxed{\textrm{Matrix Column 1:}}
 Let $H \in \Div \PP^2$ be a general hyperplane defined by $\alpha_0 X + \alpha_1 Y + \alpha_2 Z = 0$.  Then $f^* H$ is defined by vanishing of the degree $5$ polynomial $\alpha_0 f_0 + \alpha_1 f_1 + \alpha_2 f_2$. Since $(\pi^{-1} \circ f \circ \pi)^* \hat{H} = (\pi^{-1} \circ f)^* H$, and since $p_1, p_2, p_3 \in \Ind f$, we have that $(\pi^{-1} \circ f)^* H$ is the strict transform of $f^* H$. To express this divisor in our chosen basis, we compute the multiplicities of each exceptional divisor in $\pi^* f^* H$. We have 
 $$\mult_{p_1} f_0 = 2, \quad \mult_{p_1} f_1 = 2, \quad \mult_{p_1} f_2 = 1,$$
 $$\mult_{p_2} f_0 = 1, \quad \mult_{p_2} f_1 = 1, \quad \mult_{p_2} f_2 = 2,$$
 so by valuation properties, $\mult_{p_1} f^* H = 1$ and $\mult_{p_2} f^* H = 1$. 
 By $(\pm i)$-symmetry, $\mult_{p_3} f^* H = 1$. So $\hat{f}^* [\hat{H}] = 5[\hat{H}] - [E_1] - [E_2] - [E_3]$. 

\boxed{\textrm{Matrix Column 2:}}
 We have $f^{(-1)}(E_1) = C_1 \cup E_1$, by Lemma \ref{lem_basic} (\ref{it_lem_basic_contracted}) and the rightmost column of Table \ref{table_contracted}. So by Lemma \ref{lem_compute_pback}, we have $\hat{f}^* E_1 = m_{\hat{C}_1} \hat{C}_1 + m_{E_1} E_1$ for some local vanishing orders $m_{\hat{C}_1}, m_{E_1} \in \N$. In local coordinates $u' = x', v' = y'/x'$ on the codomain, a local equation for $E_1$ is $u' = 0$. The numerator in $u' = f_0(X,Y,Z)/f_2(X,Y,Z)$ in lowest terms contains the defining polynomial $X+Y+Z$ of $\hat{C}_1$ to multiplicity $1$, so $m_{\hat{C}_1} \leq 1$.
 In local coordinates $u = x, v = y/x$ on the domain, the numerator in $u' = f_0(u,uv,1)/f_2(u,uv,1)$ has factor $u$ with multiplicity $1$, so $m_{E_1} \leq 1$. So $\hat{f}^* E_1 = \hat{C}_1 + E_1$. 
 
 Since $C_1, C_2, C_3$ are all disjoint from $\{p_1,p_2,p_3\}$, we have $[\hat{C}_1] = [\hat{C}_2] = [\hat{C}_3] = \pi^* [H]$, whence $\hat{f}^* [E_1] = \pi^*[H] + [E_1]$.

\boxed{\textrm{Matrix Column 3:}} The same logic shows that $\hat{f}^* E_2 = m_{\hat{C}_2} \hat{C}_2 + m_{E_2} E_2$ for some local vanishing orders $m_{\hat{C}_2}, m_{E_2} \in \N$, and one may readily compute that each vanishing order is $1$.

 \boxed{\textrm{Matrix Column 4:}} Follows from Column 3 by complex conjugation.
\end{proof}

\begin{prop} \label{prop_rs_dd}
    The dynamical degrees of the skew pentagram map on rotationally symmetric octagons are
    $$\lambda_1(\bar{T}_{\RS}) = 4,$$
    $$\lambda_2(\bar{T}_{\RS}) = 5.$$
\end{prop}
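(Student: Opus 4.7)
The plan is to reduce both computations to the algebraically stable model $\hat{f}: \mc{X} \dashrightarrow \mc{X}$, whose pullback action on $\Num \mc{X}$ has already been computed, and then combine birational invariance (Lemma \ref{lem_bir_inv}) with the standard tools of Section \ref{sect_prelim}. Since $\bar{T}_{\RS}$ is birational to $f$ via the inclusion $\iota$, and $f$ is birational to $\hat{f}$ via the blowup $\pi$, we have $\lambda_i(\bar{T}_{\RS}) = \lambda_i(f) = \lambda_i(\hat{f})$ for $i = 1, 2$.

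For $\lambda_1$, I invoke Lemma \ref{lem_norms_and_rads}: because $\hat{f}$ is algebraically stable, $\lambda_1(\hat{f}) = \rad(\hat{f}^* : \Num \mc{X} \to \Num \mc{X})$. It then suffices to compute the spectral radius of the matrix \eqref{eq_fhatstar}. The characteristic polynomial factors nicely: with $\mu = 1 - \lambda$, expanding along the last column yields
\[
\det(\hat{f}^* - \lambda I) = (1-\lambda)^2 \bigl[(5-\lambda)(1-\lambda) + 3\bigr] = (1-\lambda)^2(\lambda - 2)(\lambda - 4),
\]
so the eigenvalues are $1, 1, 2, 4$, giving $\rad(\hat{f}^*) = 4$ as desired.

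For $\lambda_2$, since we work over $\C$ and $\mc{X}$ is a smooth projective surface, Lemma \ref{lem_deg_top} gives $\lambda_2(\hat{f}) = \deg_{\Top}(\hat{f})$. Topological degree is a birational invariant, and $\hat{f}$ is birationally conjugate to $f$, so $\deg_{\Top}(\hat{f}) = \deg_{\Top}(f) = 5$ by Lemma \ref{lem_basic}(\ref{it_lem_basic_top_deg}).

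The main obstacle is not any single step here, because the heavy lifting has already been done in the preceding lemmas: algebraic stability of $\hat{f}$ was established by the contracted-curves analysis in Table \ref{table_contracted}, the pullback matrix was computed in Lemma \ref{lem_ac_on_coh}, and the generic topological degree of $f$ was computed in Lemma \ref{lem_basic}. The only genuinely new work is the eigenvalue calculation, which is short once one notices the block structure of $\hat{f}^*$ (three nearly identical rows corresponding to the symmetric roles of the exceptional divisors $E_1, E_2, E_3$); this symmetry forces $(1-\lambda)^2$ as a factor and reduces the problem to a quadratic.
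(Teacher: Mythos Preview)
Your proof is correct and follows essentially the same approach as the paper: invoke birational invariance to pass to $\hat{f}$, use algebraic stability together with Lemma \ref{lem_norms_and_rads} to identify $\lambda_1$ with the spectral radius of the matrix \eqref{eq_fhatstar}, compute the eigenvalues $1,1,2,4$, and read off $\lambda_2$ as the topological degree from Lemma \ref{lem_basic}(\ref{it_lem_basic_top_deg}) via Lemma \ref{lem_deg_top}. The only cosmetic issue is that you introduce $\mu = 1-\lambda$ but never use it.
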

\begin{proof}
    Dynamical degrees are birational invariants (Lemma \ref{lem_bir_inv}), so it suffices to compute $\lambda_1(\hat{f})$ and $\lambda_2(f)$.  Since $\hat{f}$ has no contracted curves, it is algebraically stable (Lemma \ref{lem_as_no_ctrcted}) and thus $\lambda_1(\hat{f}) = \rad \hat{f}^*$ by Lemma \ref{lem_norms_and_rads}. Examining \eqref{eq_fhatstar}, the eigenvalues of $\hat{f}^*$ are $4,2,1,1$, so $\rad \hat{f}^* = 4$. The second dynamical degree of a surface map is just its topological degree (Lemma \ref{lem_deg_top}), and $\deg_{\Top}(f) = 5$ by Lemma \ref{lem_basic}(\ref{it_lem_basic_top_deg}).

\end{proof}
 
If $g: X \dashrightarrow X$ is a dominant rational self map of a smooth projective surface of dimension $N$, the last dynamical degree $\lambda_N(g)$ is simply the (generic) topological degree of $g$. If $\lambda_N(g) = \max_{0 \leq i \leq N} \lambda_i(g)$, we say that $g$ has \emph{dominant topological degree}. These maps are chaotic, as made precise by Corollary \ref{cor_rs_consequences}. 

\begin{cor} \label{cor_rs_consequences}
\leavevmode
\begin{enumerate}
    \item The skew pentagram map $\bar{T}_{\RS}$ on rotationally symmetric $8$-gon classes has dominant topological degree.
    \item The map $\bar{T}_{\RS} : \mc{C}_{8,\RS} \dashrightarrow \mc{C}_{8,\RS}$ does not preserve any nonconstant fibration (Theorem \ref{thm_no_fib}).
    \item The topological entropy of $\bar{T}_{\RS}$ is $\log 5$.
    \item There is a unique measure of maximal entropy for $\bar{T}_{\RS}$, and the sequence of atomic measures associated to repelling points of period $n \to \infty$ equidistributes to it.
\end{enumerate}
\end{cor}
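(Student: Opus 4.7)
The plan is to derive all four statements from the numerical data $\lambda_0(\bar{T}_{\RS}) = 1$, $\lambda_1(\bar{T}_{\RS}) = 4$, $\lambda_2(\bar{T}_{\RS}) = 5$ supplied by Proposition \ref{prop_rs_dd} together with Lemma \ref{lem_deg_top}. Since each conclusion is either a birational invariant or can be transported along a birational conjugacy, I would work with the algebraically stable model $\hat{f}:\mc{X} \dashrightarrow \mc{X}$ constructed above.

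Claim (1) is immediate from $\lambda_2 = 5 > 4 = \lambda_1 > 1 = \lambda_0$, so $\lambda_2$ is the largest dynamical degree. For claim (2), suppose toward contradiction that $\bar{T}_{\RS}$ preserves a nontrivial rational fibration; because $\mc{C}_{8,\RS}$ is a surface, the base must be a curve. Passing to smooth projective models of the commuting semiconjugacy diagram, Lemma \ref{lemma_rel_dd}(\ref{it_lemma_rel_dd_divide}) yields some $i \in \{0,1\}$ with $\lambda_{i+1}(\bar{T}_{\RS}) \mid \lambda_i(\bar{T}_{\RS})$. The two possible divisibilities are $4 \mid 1$ and $5 \mid 4$, both false, a contradiction.

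For claims (3) and (4), the condition $\lambda_2 > \lambda_1$ established in (1), namely that $\bar{T}_{\RS}$ has dominant topological degree, is exactly the hypothesis needed to invoke the theory of measurable dynamics for meromorphic surface maps over $\C$. The Gromov--Yomdin-type upper bound of Dinh--Sibony gives $h_{\Top}(\bar{T}_{\RS}) \leq \log \max_i \lambda_i(\bar{T}_{\RS}) = \log 5$, and the matching lower bound $h_{\Top}(\bar{T}_{\RS}) \geq \log \lambda_2(\bar{T}_{\RS}) = \log 5$ follows from the theorem of Guedj for meromorphic self-maps of compact Kähler surfaces with $\lambda_2 > \lambda_1$, proving (3). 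The same theorem of Guedj produces a unique measure of maximal entropy together with the stated equidistribution of repelling periodic points of period $n \to \infty$, proving (4).

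The only step that requires any real care is the application of Lemma \ref{lemma_rel_dd}(\ref{it_lemma_rel_dd_divide}) in (2): one must verify that the hypothetical fibration, together with $\bar{T}_{\RS}$ and the induced base map, can be lifted to a commuting diagram of dominant rational maps between smooth projective varieties. This is routine given the stable model $\hat{f}: \mc{X} \dashrightarrow \mc{X}$ together with a smooth projective completion and resolution of the base curve. The genuinely hard work, computing $\lambda_1(\bar{T}_{\RS})$ and $\lambda_2(\bar{T}_{\RS})$, was already accomplished in Proposition \ref{prop_rs_dd}, so the proof of Corollary \ref{cor_rs_consequences} is largely a bookkeeping exercise in combining it with the cited general theorems.
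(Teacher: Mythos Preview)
Your argument is correct and matches the paper's proof almost line for line: both derive (1) from Proposition \ref{prop_rs_dd} and Lemma \ref{lem_deg_top}, both obtain (2) by applying Lemma \ref{lemma_rel_dd}(\ref{it_lemma_rel_dd_divide}) and checking that neither divisibility $4\mid 1$ nor $5\mid 4$ holds, and both deduce (3)--(4) from the general theory for maps of dominant topological degree. The one refinement worth noting is that for (3) and (4) the paper cites Dinh--Nguy\^en--Truong \cite{MR3436236} rather than Guedj \cite{guedj} alone, because the latter's proof has a known gap; you should adjust your attribution accordingly.
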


\begin{proof} \leavevmode
    \begin{enumerate}
        \item We trivially have $\lambda_0(\ovl{T}_\RS) = 1$ by Lemma \ref{lem_deg_top}, and the remaining dynamical degrees are computed in Proposition \ref{prop_rs_dd}.
        \item We have $\lambda_0(\bar{T}_{\RS}) \nmid \lambda_1(\bar{T}_{\RS})$ and $\lambda_1(\bar{T}_{\RS}) \nmid \lambda_2(\bar{T}_{\RS})$, so this follows from the product formula, Lemma \ref{lemma_rel_dd} (\ref{it_lemma_rel_dd_divide}). 
        \item The topological entropy of a dominant rational self-map $g$ of a smooth projective variety over $\C$ is at most the logarithm of its largest dynamical degree \cite[Theorem 1.4]{MR2119243}, and if the map has dominant topological degree, we have equality \cite{MR3436236}. (This result also appears in \cite{guedj}, but the proof has a gap; see \cite{MR3436236} for details.)
        \item This is also part of the main theorem of \cite{MR3436236}, building on \cite{guedj}.
    \end{enumerate}
\end{proof}

We now have all the ingredients required for an entropy sandwich, allowing us to prove our main non-integrability result, Theorem \ref{thm_main} (\ref{it_thm_main_nonint}). We state the more precise version as follows.
\begin{thm} \label{thm_main_body}
Let $a,b,c,d \in \Z$ be integers such that $a \equiv 0, b \equiv 2, c \equiv 1, d \equiv 4 \pmod 8.$ Then
$$\lambda_1(T_{a,b,c,d}) = 4.$$
\end{thm}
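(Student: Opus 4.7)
The plan is to execute the ``entropy sandwich'' strategy previewed in the introduction. The upper bound $\lambda_1(T_{a,b,c,d})\leq 4$ is already in hand from Proposition \ref{prop_upper}, so the task reduces to proving the reverse inequality by exhibiting a chain of dynamical degree comparisons that lands on the computation $\lambda_1(\bar{T}_{\RS})=4$ from Proposition \ref{prop_rs_dd}.

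First, I would pass from infinite polygons to closed $8$-gons. Since $a,b,c,d$ are respectively congruent to $0,2,1,4$ modulo $8$, they are distinct modulo $8$, and the local rule for $T_{a,b,c,d}$ only sees indices mod $8$ when restricted to $8$-periodic polygons. Thus the induced lattice map on $(\PP^2)^{\Z/8\Z}$ is precisely $T^{(8)}_{0,2,1,4}$. By Lemma \ref{lem_dom_quadrilateral_trick} with $n=8$ and $d=4$ (note $4\mid 8$), this induced map is dominant, hence iterable. Lemma \ref{lem_dd_periodic_comparison} then yields
\[
\lambda_1\bigl(T^{(8)}_{0,2,1,4}\bigr)\leq \lambda_1(T_{a,b,c,d}).
\]

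Second, I would descend to the moduli space. Since $T^{(8)}_{0,2,1,4}$ commutes with the diagonal $\PGL_3$-action, it induces a dominant rational self-map $\bar{T}^{(8)}_{0,2,1,4}$ of $\mc{C}_8$. After choosing a smooth projective compactification of $(\PP^2)^8$ and of $\mc{C}_8$ (which does not affect dynamical degrees by birational invariance, Lemma \ref{lem_bir_inv}), the relative dynamical degree inequality of Lemma \ref{lemma_rel_dd}\eqref{it_lemma_rel_dd_skew_prod} applied to the semiconjugating quotient map gives
\[
\lambda_1\bigl(\bar{T}^{(8)}_{0,2,1,4}\bigr)\leq \lambda_1\bigl(T^{(8)}_{0,2,1,4}\bigr).
\]

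Third, I would restrict to the invariant subvariety $\mc{C}_{8,\RS}\subset \mc{C}_8$. The map $\bar{T}^{(8)}_{0,2,1,4}$ preserves $\mc{C}_{8,\RS}$ with restriction $\bar{T}_{\RS}$, and $\bar{T}_{\RS}$ is dominant because its projectivization $f:\PP^2\dashrightarrow\PP^2$ is dominant by Lemma \ref{lem_basic}\eqref{it_lem_basic_non_dominant}. Lemma \ref{lem_subsystem} then yields
\[
\lambda_1(\bar{T}_{\RS})\leq \lambda_1\bigl(\bar{T}^{(8)}_{0,2,1,4}\bigr).
\]
Concatenating the three inequalities with Proposition \ref{prop_rs_dd} produces $4=\lambda_1(\bar{T}_{\RS})\leq\lambda_1(T_{a,b,c,d})$, and together with Proposition \ref{prop_upper} this closes the sandwich.

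The substantive content is bundled entirely into the already-established computation $\lambda_1(\bar{T}_{\RS})=4$ via the algebraically stable model on the blowup $\mc{X}$. Everything after that is bookkeeping, but the step requiring the most care will be the descent to moduli: one must verify that all the maps involved remain dominant after compactification so that Lemmas \ref{lemma_rel_dd} and \ref{lem_subsystem} apply, and that the quotient semiconjugacy genuinely satisfies the hypotheses of the relative dynamical degree formula. No new geometric computations should be needed beyond the ones performed in Section \ref{sect_truly}.
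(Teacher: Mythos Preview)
Your proposal is correct and follows essentially the same entropy-sandwich strategy as the paper. The only difference is the order of the two middle steps: the paper first restricts to the invariant subvariety $(\PP^2)^8_{\RS}\subset(\PP^2)^8$ (Lemma \ref{lem_subsystem}) and then quotients by $\PGL_3$ (Lemma \ref{lemma_rel_dd}), passing through $T_{\RS}$, whereas you first quotient to $\mc{C}_8$ and then restrict to $\mc{C}_{8,\RS}$, passing through $\bar{T}^{(8)}_{0,2,1,4}$. Both orderings work and require the same mild compactification bookkeeping you flagged.
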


\begin{proof}
By Lemma \ref{lem_iterable}, the lattice map $T_{a,b,c,d}$ is iterable, so the dynamical degree exists. We compute it by finding equal upper and lower bounds, as follows:
\begin{align*}
    4 &=\lambda_1(\bar{T}_{\RS})&\text{(Proposition \ref{prop_rs_dd})}\\
    &\leq \lambda_1(T_{\RS})&\text{(Lemma \ref{lemma_rel_dd} (\ref{it_lemma_rel_dd_skew_prod}))}\\
    &\leq \lambda_1(T^{(8)}_{0,2,1,4})&\text{(Lemma \ref{lem_subsystem})}\\
    &\leq \lambda_1(T_{a,b,c,d})&\text{(Lemma \eqref{lem_dd_periodic_comparison})}\\
    &\leq 4&\text{(Proposition \ref{prop_upper})}.
\end{align*}
\end{proof}

Together with our analysis of equal-length maps in Theorem \ref{thm_equal_length}, this completes the proof of Theorem \ref{thm_main}. By considering specific spaces of polygons, we can then deduce Corollary \ref{cor_height}.

\begin{proof}[Proof of Corollary \ref{cor_height}]
      Under the same hypotheses on $a,b,c,d$ as in Theorem \ref{thm_main_body}, for any $n$, the restriction $T^{(8n)}_{a,b,c,d}$ to $8n$-gons is dominant by Proposition \ref{lem_dom_quadrilateral_trick}. Periodically embedding $(\PP^2)^8$ in $(\PP^2)^{8n}$ yields
     $$4 = \lambda_1(T^{(8)}_{0,2,1,4}) \leq \lambda_1(T^{(8n)}_{a,b,c,d}) = 4.$$
     By \cite[Theorem 1.2]{MR4958582}, we have
     $\lambda_1(T^{(8n)}_{a,b,c,d}) = \alpha_1(T^{(8n)}_{a,b,c,d})$.
\end{proof}

\subsection{A note on the heat map}

We crystallize our general approach in the following proposition.

\begin{prop} \label{prop_sandwich}
    Let $T: (\PP^N)^\Z \dashrightarrow (\PP^N)^\Z$ be an iterable lattice map on $(\PP^N)^\Z$. Suppose that there is some value of $n$ such that the map $T^{(n)}$ on $n$-periodic sequences is iterable and $\lambda_1( T^{(n)}) \geq \deg T$. Then 
    $$\lambda_1(T) = \lambda_1(T^{(n)}) = \deg T.$$ 
\end{prop}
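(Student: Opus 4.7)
The proof is essentially a chain of four inequalities that sandwich all three quantities between $\deg T$ on both sides. The plan is as follows.

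First, I would invoke the hypothesis directly to get the bound $\deg T \leq \lambda_1(T^{(n)})$. Next, I would view $\Z/n\Z$ as a quotient of $\Z$, so that $T^{(n)}$ is precisely the induced lattice map on $(\PP^N)^{\Z/n\Z}$ in the sense of Lemma \ref{lem_dd_periodic_comparison}. Applying that lemma gives
\[
\lambda_1(T^{(n)}) \leq \lambda_1(T).
\]
Finally, the first iterate estimate of Lemma \ref{lem_first_iterate} yields $\lambda_1(T) \leq \deg T$, closing the sandwich.

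Combining these inequalities produces
\[
\deg T \;\leq\; \lambda_1(T^{(n)}) \;\leq\; \lambda_1(T) \;\leq\; \deg T,
\]
so equality holds throughout, which is exactly the claimed statement.

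There is essentially no obstacle: the proposition is a formal consequence of the two lemmas already established in Section \ref{sect_latt}, together with the hypothesis. The only small point to verify is that the $T^{(n)}$ appearing here fits the framework of Lemma \ref{lem_dd_periodic_comparison} (namely, that $n$-periodic sequences correspond to the quotient $\Z \twoheadrightarrow \Z/n\Z$ and that the induced lattice map on $(\PP^N)^{\Z/n\Z}$ is the same $T^{(n)}$ whose iterability is assumed), but this is immediate from the definitions.
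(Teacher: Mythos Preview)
Your proof is correct and essentially identical to the paper's own argument: the paper likewise combines the hypothesis $\deg T \leq \lambda_1(T^{(n)})$ with Lemma \ref{lem_dd_periodic_comparison} and Lemma \ref{lem_first_iterate} to close the circle of inequalities.
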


\begin{proof}
Iterability ensures that both these dynamical degrees exist. We know $\lambda_1(T^{(n)}) \leq \lambda_1(T)$ by Lemma \ref{lem_dd_periodic_comparison}, and $\lambda_1(T) \leq \deg T$ by Lemma \ref{lem_first_iterate}. So the condition $\deg T \leq \lambda_1(T^{(n)}) $ creates a circle of inequalities.
\end{proof}

The \emph{heat map} $\mc{H}: (\PP^2)^\Z \dashrightarrow (\PP^2)^\Z$ is the lattice map given by the local rule
$$ \psi : (\PP^2)^4 \dashrightarrow \PP^2,$$
$$ v'_i = \ovl{(\ovl{v_i v_{i+2}} \cap \ovl{v_{i+1} v_{i+3}}) (\ovl{v_i v_{i+1}} \cap \ovl{v_{i + 2} v_{i + 3}})}.$$
The point $v'_i$ is interpreted as the ``projective midpoint'' of $v_{i+1}$ and $v_{i+2}$. Kaschner-Roeder and Schwartz studied the heat map on the moduli space of closed pentagons \cite{MR3642552, KR}.

\begin{cor} \label{cor_heat}
    The dynamical degree of the complex heat map $\mc{H}$ is $4$.
\end{cor}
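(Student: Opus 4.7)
The plan is to apply the entropy sandwich (Proposition \ref{prop_sandwich}) with $T = \mc{H}$ and $n = 5$, paralleling the proof of Theorem \ref{thm_main_body}. Three ingredients are needed: iterability of $\mc{H}$, the identity $\deg \mc{H} = 4$, and a lower bound $\lambda_1(\mc{H}^{(5)}) \geq 4$.

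Iterability will follow from the regular-polygon argument of Lemma \ref{lem_iterable}: the periodic extension of a regular pentagon in $\R^2 \subset \PP^2_\C$ is sent by $\mc{H}$ to a rescaled copy of itself, hence lies in the domain of every iterate. The value $\deg \mc{H} = 4$ should be read off from the local rule $\psi$ by expanding the iterated intersections and joins via the cross-product formulas \eqref{eq_two_point_line} and \eqref{eq_two_line_point} and then cancelling the common polynomial factor shared by the three homogeneous components; the naive multidegree before cancellation is substantially larger. For the lower bound, observe that the map $\mc{H}^{(5)}$ on closed pentagons commutes with the diagonal $\PGL_3$-action and hence descends to a dominant rational self-map $\bar{\mc{H}}: \mc{C}_5 \dashrightarrow \mc{C}_5$ of the moduli space of pentagons, which is a surface. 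This $\bar{\mc{H}}$ is the main object of study in Kaschner-Roeder \cite{KR}: they construct an algebraically stable model by blowing up finitely many points in a projective compactification of $\mc{C}_5$, write down the associated pullback on the N\'eron-Severi group, and compute its spectral radius to be $4$. By Lemma \ref{lem_norms_and_rads}, this gives $\lambda_1(\bar{\mc{H}}) = 4$, and the semiconjugacy inequality Lemma \ref{lemma_rel_dd}(\ref{it_lemma_rel_dd_skew_prod}) applied to the quotient $(\PP^2)^5 \dashrightarrow \mc{C}_5$ upgrades this to
$$\lambda_1(\mc{H}^{(5)}) \;\geq\; \lambda_1(\bar{\mc{H}}) \;=\; 4 \;=\; \deg \mc{H}.$$

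Plugging this into Proposition \ref{prop_sandwich} closes the inequality chain $\deg \mc{H} \leq \lambda_1(\mc{H}^{(5)}) \leq \lambda_1(\mc{H}) \leq \deg \mc{H}$ and yields $\lambda_1(\mc{H}) = 4$. The main obstacle is the degree computation $\deg \mc{H} = 4$: one must explicitly identify the common factor appearing in all three homogeneous components of the heat-map formula, since the unreduced expression has strictly larger total degree. Every other step is a formal packaging of tools already developed in the paper, together with the surface-dynamics computation imported from Kaschner-Roeder.
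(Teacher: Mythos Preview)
Your proposal is correct and follows essentially the same approach as the paper's proof: iterability via the regular-pentagon argument, the Kaschner--Roeder computation $\lambda_1(\bar{\mc{H}}^{(5)}) = 4$ lifted through the semiconjugacy inequality, and the degree identity $\deg \mc{H} = 4$, all assembled via Proposition~\ref{prop_sandwich}. The only difference is that the paper disposes of $\deg \psi = 4$ by a one-line computer check rather than the by-hand factor cancellation you outline.
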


\begin{proof}
    The heat map respects the diagonal action of $\PGL_3$, and fixes regular real $5$-gons up to dilatation, so it is iterable over $\C$. Kaschner-Roeder showed that $\ovl{\mc{H}}^{(5)}$ is dominant and that $\lambda_1(\ovl{\mc{H}}^{(5)}) = 4$ \cite{KR}. By semiconjugacy, we have $4 \leq \lambda_1(\mc{H}^{(5)})$.
    It is easy to check by computer that $\deg \psi = 4$; see \cite{code}. So $\deg \mc{H} \leq \lambda_1(\mc{H}^{(5)})$, meeting the hypotheses of Proposition \ref{prop_sandwich}.
\end{proof}

\begin{question}
It appears that the heat map shares many features with skew pentagram maps. Not only do they have the same degree and dynamical degree, and share a projective equivariance property, but the particular cases under study each have dominant topological degree and admit a contracted-curve-free model \cite{KR}. Further, in both cases, the algebraically stable model has a dominant eigenvector given by the anticanonical class of the model. Why?
\end{question}

\bibliographystyle{alpha}
\bibliography{bib}

@misc{textbookcasepentagramrigidity,
      title={A Textbook Case of Pentagram Rigidity}, 
      author={Richard Evan Schwartz},
      year={2021},
      eprint={2108.07604},
      archivePrefix={arXiv},
      primaryClass={math.DS},
      url={https://arxiv.org/abs/2108.07604},
     note={arXiv:2108.07604},
}

@article {MR4761767,
    AUTHOR = {Schwartz, Richard Evan},
     TITLE = {Pentagram rigidity for centrally symmetric octagons},
   JOURNAL = {Int. Math. Res. Not. IMRN},
  FJOURNAL = {International Mathematics Research Notices. IMRN},
      YEAR = {2024},
    NUMBER = {12},
     PAGES = {9535--9561},
      ISSN = {1073-7928,1687-0247},
   MRCLASS = {37D40 (37C85 51M04)},
  MRNUMBER = {4761767},
MRREVIEWER = {Boris\ A.\ Khesin},
       DOI = {10.1093/imrn/rnae050},
       URL = {https://doi.org/10.1093/imrn/rnae050},
}

@manual{sage,
  Key          = {SageMath},
  Author       = {The {Sage Developers}},
  Title        = {{S}ageMath, the {S}age {M}athematics {S}oftware {S}ystem ({V}ersion 10.0)},
  note         = {{\tt https://www.sagemath.org}},
  Year         = {2023},
}

@Book{Silverman10,
    author = {Joseph H. Silverman},
    title = {The Arithmetic of Dynamical Systems},
    series = {Graduate Texts in Mathematics},
    volume = {241},
    publisher = {Springer-Verlag},
    year = {2007},
}

@article{OST,
  title={The pentagram map: a discrete integrable system},
  author={Ovsienko, Valentin and Schwartz, Richard and Tabachnikov, Serge},
  journal={Communications in Mathematical Physics},
  volume={299},
  number={2},
  pages={409--446},
  year={2010},
  publisher={Springer}
}

@article {MR2119243,
    AUTHOR = {Dinh, Tien-Cuong and Sibony, Nessim},
     TITLE = {Regularization of currents and entropy},
   JOURNAL = {Ann. Sci. \'Ecole Norm. Sup. (4)},
  FJOURNAL = {Annales Scientifiques de l'\'Ecole Normale Sup\'erieure.
              Quatri\`eme S\'erie},
    VOLUME = {37},
      YEAR = {2004},
    NUMBER = {6},
     PAGES = {959--971},
      ISSN = {0012-9593},
   MRCLASS = {32U40 (32C30 32H04 32Q15 37B40)},
  MRNUMBER = {2119243},
MRREVIEWER = {Ma\l gorzata\ Stawiska},
       DOI = {10.1016/j.ansens.2004.09.002},
       URL = {https://doi.org/10.1016/j.ansens.2004.09.002},
}

@article {dinhnguyen,
    AUTHOR = {Dinh, Tien-Cuong and Nguy\^en, Vi\^et-Anh},
     TITLE = {Comparison of dynamical degrees for semi-conjugate meromorphic
              maps},
   JOURNAL = {Comment. Math. Helv.},
  FJOURNAL = {Commentarii Mathematici Helvetici. A Journal of the Swiss
              Mathematical Society},
    VOLUME = {86},
      YEAR = {2011},
    NUMBER = {4},
     PAGES = {817--840},
      ISSN = {0010-2571,1420-8946},
   MRCLASS = {32H50 (32U40 37F10)},
  MRNUMBER = {2851870},
MRREVIEWER = {Andreas\ H\"oring},
       DOI = {10.4171/CMH/241},
       URL = {https://doi.org/10.4171/CMH/241},
}

@article {OSTclosed,
    AUTHOR = {Ovsienko, Valentin and Schwartz, Richard Evan and Tabachnikov,
              Serge},
     TITLE = {Liouville-{A}rnold integrability of the pentagram map on
              closed polygons},
   JOURNAL = {Duke Math. J.},
  FJOURNAL = {Duke Mathematical Journal},
    VOLUME = {162},
      YEAR = {2013},
    NUMBER = {12},
     PAGES = {2149--2196},
      ISSN = {0012-7094,1547-7398},
   MRCLASS = {37J35 (51A20)},
  MRNUMBER = {3102478},
MRREVIEWER = {V.\ Oproiu},
       DOI = {10.1215/00127094-2348219},
       URL = {https://doi.org/10.1215/00127094-2348219},
}

@article{soloviev,
  title={Integrability of the pentagram map},
  author={Soloviev, Fedor},
  journal={Duke Mathematical Journal},
  volume={162},
  number={15},
  pages={2815--2853},
  year={2013},
  publisher={Duke University Press}
}

@article {izosimov,
    AUTHOR = {Izosimov, Anton},
     TITLE = {Pentagram maps and refactorization in {P}oisson-{L}ie groups},
   JOURNAL = {Adv. Math.},
  FJOURNAL = {Advances in Mathematics},
    VOLUME = {404},
      YEAR = {2022},
     PAGES = {Paper No. 108476, 46},
      ISSN = {0001-8708,1090-2082},
   MRCLASS = {37J70 (39A70)},
  MRNUMBER = {4430020},
MRREVIEWER = {Alberto\ Medina},
       DOI = {10.1016/j.aim.2022.108476},
       URL = {https://doi-org.ezp-prod1.hul.harvard.edu/10.1016/j.aim.2022.108476},
}

@book{Hartshorne,
  title={Algebraic Geometry},
  author={Hartshorne, Robin},
  isbn={9781475738490},
  series={Graduate Texts in Mathematics},
  url={https://books.google.com/books?id=7z4mBQAAQBAJ},
  year={2013},
  publisher={Springer New York}
}

@article {weinreich2021,
    AUTHOR = {Weinreich, Max},
     TITLE = {The algebraic dynamics of the pentagram map},
   JOURNAL = {Ergodic Theory Dynam. Systems},
  FJOURNAL = {Ergodic Theory and Dynamical Systems},
    VOLUME = {43},
      YEAR = {2023},
    NUMBER = {10},
     PAGES = {3460--3505},
      ISSN = {0143-3857,1469-4417},
   MRCLASS = {37J70 (14E05 14H70 37P05)},
  MRNUMBER = {4637161},
MRREVIEWER = {Alexandr\ V.\ Pukhlikov},
       DOI = {10.1017/etds.2022.82},
       URL = {https://doi.org/10.1017/etds.2022.82},
}

@article{schwartz,
author = "Schwartz, Richard",
fjournal = "Experimental Mathematics",
journal = "Experiment. Math.",
number = "1",
pages = "71--81",
publisher = "A K Peters, Ltd.",
title = "The pentagram map",
url = "https://projecteuclid.org:443/euclid.em/1048709118",
volume = "1",
year = "1992"
}

@article{khesinsoloviev,
  title={Integrability of higher pentagram maps},
  author={B. Khesin and F. Soloviev},
  journal={Mathematische Annalen},
  year={2012},
  volume={357},
  pages={1005-1047}
}

@book{GIT,
  title={Geometric Invariant Theory},
  author={Mumford, David and Fogarty, John and Kirwan, Frances},
  number={v. 34},
  isbn={9783540569633},
  lccn={93033772},
  series={Ergebnisse der Mathematik und Ihrer Grenzgebiete, 3 Folge/A Series of Modern Surveys in Mathematics Series},
  url={https://books.google.com/books?id=dFlv3zn\_2-gC},
  year={1994},
  publisher={Springer Berlin Heidelberg}
}

@article {MR3342248,
    AUTHOR = {Roeder, Roland K. W.},
     TITLE = {The action on cohomology by compositions of rational maps},
   JOURNAL = {Math. Res. Lett.},
  FJOURNAL = {Mathematical Research Letters},
    VOLUME = {22},
      YEAR = {2015},
    NUMBER = {2},
     PAGES = {605--632},
      ISSN = {1073-2780},
   MRCLASS = {32H50 (37F10 55N45)},
  MRNUMBER = {3342248},
MRREVIEWER = {Vincent Guedj},
       DOI = {10.4310/MRL.2015.v22.n2.a13},
       URL = {https://doi.org/10.4310/MRL.2015.v22.n2.a13},
}

@article {MR4048444,
    AUTHOR = {Truong, Tuyen Trung},
     TITLE = {Relative dynamical degrees of correspondences over a field of
              arbitrary characteristic},
   JOURNAL = {J. Reine Angew. Math.},
  FJOURNAL = {Journal f\"{u}r die Reine und Angewandte Mathematik. [Crelle's
              Journal]},
    VOLUME = {758},
      YEAR = {2020},
     PAGES = {139--182},
      ISSN = {0075-4102},
   MRCLASS = {37P05 (14E05 37F05 37F80)},
  MRNUMBER = {4048444},
MRREVIEWER = {Yu Yasufuku},
       DOI = {10.1515/crelle-2017-0052},
       URL = {https://doi.org/10.1515/crelle-2017-0052},
}

@article {MR3436236,
    AUTHOR = {Dinh, Tien-Cuong and Nguy\^en, Vi\^et-Anh and Truong, Tuyen
              Trung},
     TITLE = {Equidistribution for meromorphic maps with dominant
              topological degree},
   JOURNAL = {Indiana Univ. Math. J.},
  FJOURNAL = {Indiana University Mathematics Journal},
    VOLUME = {64},
      YEAR = {2015},
    NUMBER = {6},
     PAGES = {1805--1828},
      ISSN = {0022-2518,1943-5258},
   MRCLASS = {32H50 (37F10)},
  MRNUMBER = {3436236},
MRREVIEWER = {Fei\ Hu},
       DOI = {10.1512/iumj.2015.64.5674},
       URL = {https://doi.org/10.1512/iumj.2015.64.5674},
}

@article {guedj,
    AUTHOR = {Guedj, Vincent},
     TITLE = {Ergodic properties of rational mappings with large topological
              degree},
   JOURNAL = {Ann. of Math. (2)},
  FJOURNAL = {Annals of Mathematics. Second Series},
    VOLUME = {161},
      YEAR = {2005},
    NUMBER = {3},
     PAGES = {1589--1607},
      ISSN = {0003-486X,1939-8980},
   MRCLASS = {32H50 (32H30 37A20 37F10)},
  MRNUMBER = {2179389},
MRREVIEWER = {Jean-Yves\ Briend},
       DOI = {10.4007/annals.2005.161.1589},
       URL = {https://doi.org/10.4007/annals.2005.161.1589},
}

@book {MR3642552,
    AUTHOR = {Schwartz, Richard Evan},
     TITLE = {The projective heat map},
    SERIES = {Mathematical Surveys and Monographs},
    VOLUME = {219},
 PUBLISHER = {American Mathematical Society, Providence, RI},
      YEAR = {2017},
     PAGES = {x+195},
      ISBN = {978-1-4704-3514-1},
   MRCLASS = {37-02 (14E05 26A18 37B05 37E30 51M15)},
  MRNUMBER = {3642552},
MRREVIEWER = {Roland\ K. W. Roeder},
       DOI = {10.1090/surv/219},
       URL = {https://doi.org/10.1090/surv/219},
}

@article {KR,
    AUTHOR = {Kaschner, Scott R. and Roeder, Roland K. W.},
     TITLE = {Complex perspective for the projective heat map acting on
              pentagons},
   JOURNAL = {Conform. Geom. Dyn.},
  FJOURNAL = {Conformal Geometry and Dynamics. An Electronic Journal of the
              American Mathematical Society},
    VOLUME = {21},
      YEAR = {2017},
     PAGES = {247--263},
      ISSN = {1088-4173},
   MRCLASS = {32H50 (26A18 37F10 37F15 51M15)},
  MRNUMBER = {3645773},
MRREVIEWER = {Mattias\ Jonsson},
       DOI = {10.1090/ecgd/310},
       URL = {https://doi.org/10.1090/ecgd/310},
}

@article {MR4958582,
    AUTHOR = {Matsuzawa, Yohsuke and Xie, Junyi},
     TITLE = {Arithmetic degree and its application to {Z}ariski dense orbit
              conjecture},
   JOURNAL = {J. Lond. Math. Soc. (2)},
  FJOURNAL = {Journal of the London Mathematical Society. Second Series},
    VOLUME = {112},
      YEAR = {2025},
    NUMBER = {3},
     PAGES = {Paper No. e70282},
      ISSN = {0024-6107,1469-7750},
   MRCLASS = {37P15 (37P55)},
  MRNUMBER = {4958582},
       DOI = {10.1112/jlms.70282},
       URL = {https://doi.org/10.1112/jlms.70282},
}

@article {MR4831038,
    AUTHOR = {Jia, Jia and Shibata, Takahiro and Xie, Junyi and Zhang,
              De-Qi},
     TITLE = {Endomorphisms of quasi-projective varieties: towards {Z}ariski
              dense orbit and {K}awaguchi-{S}ilverman conjectures},
   JOURNAL = {Math. Res. Lett.},
  FJOURNAL = {Mathematical Research Letters},
    VOLUME = {31},
      YEAR = {2024},
    NUMBER = {3},
     PAGES = {701--746},
      ISSN = {1073-2780,1945-001X},
   MRCLASS = {37P05 (14E05 37F80)},
  MRNUMBER = {4831038},
       DOI = {10.4310/mrl.241113041354},
       URL = {https://doi.org/10.4310/mrl.241113041354},
}

@misc{zou2025spiralstictactoepartitiondeep,
      title={Spirals, Tic-Tac-Toe Partition, and Deep Diagonal Maps}, 
      author={Zhengyu Zou},
      year={2025},
      eprint={2412.15561},
      archivePrefix={arXiv},
      primaryClass={math.DS},
      url={https://arxiv.org/abs/2412.15561}, 
      note={arXiv:2412.15561},
}

@article {dangsingular,
    AUTHOR = {Dang, Nguyen-Bac},
     TITLE = {Degrees of iterates of rational maps on normal projective
              varieties},
   JOURNAL = {Proc. Lond. Math. Soc. (3)},
  FJOURNAL = {Proceedings of the London Mathematical Society. Third Series},
    VOLUME = {121},
      YEAR = {2020},
    NUMBER = {5},
     PAGES = {1268--1310},
      ISSN = {0024-6115,1460-244X},
   MRCLASS = {14E05 (37C35 37P55)},
  MRNUMBER = {4133708},
MRREVIEWER = {Alexandr\ V.\ Pukhlikov},
       DOI = {10.1112/plms.12366},
       URL = {https://doi.org/10.1112/plms.12366},
}

@article {MR3438382,
    AUTHOR = {Khesin, Boris and Soloviev, Fedor},
     TITLE = {The geometry of dented pentagram maps},
   JOURNAL = {J. Eur. Math. Soc. (JEMS)},
  FJOURNAL = {Journal of the European Mathematical Society (JEMS)},
    VOLUME = {18},
      YEAR = {2016},
    NUMBER = {1},
     PAGES = {147--179},
      ISSN = {1435-9855},
   MRCLASS = {37K10 (37K25 53A20)},
  MRNUMBER = {3438382},
MRREVIEWER = {Yuri B. Suris},
       DOI = {10.4171/JEMS/586},
       URL = {https://doi-org.ezp-prod1.hul.harvard.edu/10.4171/JEMS/586},
}

@article {MR3282373,
    AUTHOR = {Khesin, Boris and Soloviev, Fedor},
     TITLE = {Non-integrability vs. integrability in pentagram maps},
   JOURNAL = {J. Geom. Phys.},
  FJOURNAL = {Journal of Geometry and Physics},
    VOLUME = {87},
      YEAR = {2015},
     PAGES = {275--285},
      ISSN = {0393-0440},
   MRCLASS = {37J35 (52A55)},
  MRNUMBER = {3282373},
       DOI = {10.1016/j.geomphys.2014.07.027},
       URL = {https://doi-org.ezp-prod1.hul.harvard.edu/10.1016/j.geomphys.2014.07.027},
}

@article {izo_khes,
    AUTHOR = {Izosimov, Anton and Khesin, Boris},
     TITLE = {Long-diagonal pentagram maps},
   JOURNAL = {Bull. Lond. Math. Soc.},
  FJOURNAL = {Bulletin of the London Mathematical Society},
    VOLUME = {55},
      YEAR = {2023},
    NUMBER = {3},
     PAGES = {1314--1329},
      ISSN = {0024-6093,1469-2120},
   MRCLASS = {37J35 (37K25 53A20)},
  MRNUMBER = {4605672},
MRREVIEWER = {W.-H.\ Steeb},
       DOI = {10.1112/blms.12792},
       URL = {https://doi.org/10.1112/blms.12792},
}

@article {MR3356734,
    AUTHOR = {Mar\'{\i} Beffa, Gloria},
     TITLE = {On integrable generalizations of the pentagram map},
   JOURNAL = {Int. Math. Res. Not. IMRN},
  FJOURNAL = {International Mathematics Research Notices. IMRN},
      YEAR = {2015},
    NUMBER = {12},
     PAGES = {3669--3693},
      ISSN = {1073-7928},
   MRCLASS = {37J35 (37D40 37K10 37K35)},
  MRNUMBER = {3356734},
MRREVIEWER = {Yuri B. Suris},
       DOI = {10.1093/imrn/rnu044},
       URL = {https://doi-org.ezp-prod1.hul.harvard.edu/10.1093/imrn/rnu044},
}

@article {MR2358970,
    AUTHOR = {Hasselblatt, Boris and Propp, James},
     TITLE = {Degree-growth of monomial maps},
   JOURNAL = {Ergodic Theory Dynam. Systems},
  FJOURNAL = {Ergodic Theory and Dynamical Systems},
    VOLUME = {27},
      YEAR = {2007},
    NUMBER = {5},
     PAGES = {1375--1397},
      ISSN = {0143-3857},
   MRCLASS = {37F45 (32H50 37B40)},
  MRNUMBER = {2358970},
MRREVIEWER = {Mattias Jonsson},
       DOI = {10.1017/S0143385707000168},
       URL = {https://doi-org.ezp-prod1.hul.harvard.edu/10.1017/S0143385707000168},
}

@article {MR2317336,
    AUTHOR = {Speyer, David E.},
     TITLE = {Perfect matchings and the octahedron recurrence},
   JOURNAL = {J. Algebraic Combin.},
  FJOURNAL = {Journal of Algebraic Combinatorics. An International Journal},
    VOLUME = {25},
      YEAR = {2007},
    NUMBER = {3},
     PAGES = {309--348},
      ISSN = {0925-9899,1572-9192},
   MRCLASS = {05E99 (05C70)},
  MRNUMBER = {2317336},
MRREVIEWER = {Markus\ E.\ Fulmek},
       DOI = {10.1007/s10801-006-0039-y},
       URL = {https://doi.org/10.1007/s10801-006-0039-y},
}

@article {MR4033822,
    AUTHOR = {Galashin, Pavel and Pylyavskyy, Pavlo},
     TITLE = {Quivers with additive labelings: classification and algebraic
              entropy},
   JOURNAL = {Doc. Math.},
  FJOURNAL = {Documenta Mathematica},
    VOLUME = {24},
      YEAR = {2019},
     PAGES = {2057--2135},
      ISSN = {1431-0635,1431-0643},
   MRCLASS = {13F60 (05E40 37K10)},
  MRNUMBER = {4033822},
MRREVIEWER = {Changjian\ Fu},
}

@article {MR4790973,
    AUTHOR = {Arnold, Maxim and Schwartz, Richard Evan and Tabachnikov,
              Serge},
     TITLE = {On projective evolutes of polygons},
   JOURNAL = {Exp. Math.},
  FJOURNAL = {Experimental Mathematics},
    VOLUME = {33},
      YEAR = {2024},
    NUMBER = {3},
     PAGES = {347--356},
      ISSN = {1058-6458,1944-950X},
   MRCLASS = {51N35 (13F60 14H10 14H52 37D40)},
  MRNUMBER = {4790973},
       DOI = {10.1080/10586458.2022.2102095},
       URL = {https://doi.org/10.1080/10586458.2022.2102095},
}

@article {MR4646096,
    AUTHOR = {Affolter, Niklas Christoph and de Tili\`ere, B\'eatrice and
              Melotti, Paul},
     TITLE = {The {S}chwarzian octahedron recurrence (d{SKP} equation) {I}:
              explicit solutions},
   JOURNAL = {Comb. Theory},
  FJOURNAL = {Combinatorial Theory},
    VOLUME = {3},
      YEAR = {2023},
    NUMBER = {2},
     PAGES = {Paper No. 15, 58},
      ISSN = {2766-1334},
   MRCLASS = {82B23 (05A15 37K10 37K60 82B20)},
  MRNUMBER = {4646096},
       DOI = {10.5070/c63261993},
       URL = {https://doi.org/10.5070/c63261993},
}

@book {MR3617981,
    AUTHOR = {Eisenbud, David and Harris, Joe},
     TITLE = {3264 and all that---a second course in algebraic geometry},
 PUBLISHER = {Cambridge University Press, Cambridge},
      YEAR = {2016},
     PAGES = {xiv+616},
      ISBN = {978-1-107-60272-4; 978-1-107-01708-5},
   MRCLASS = {14-01 (14C15 14M15 14N10)},
  MRNUMBER = {3617981},
MRREVIEWER = {Arnaud\ Beauville},
       DOI = {10.1017/CBO9781139062046},
       URL = {https://doi-org.ezp-prod1.hul.harvard.edu/10.1017/CBO9781139062046},
}

@article {MR4865926,
    AUTHOR = {Affolter, Niklas Christoph and de Tili\`ere, B\'eatrice and
              Melotti, Paul},
     TITLE = {The {S}chwarzian octahedron recurrence (d{SKP} equation) {II}:
              {G}eometric systems},
   JOURNAL = {Discrete Comput. Geom.},
  FJOURNAL = {Discrete \& Computational Geometry. An International Journal
              of Mathematics and Computer Science},
    VOLUME = {73},
      YEAR = {2025},
    NUMBER = {2},
     PAGES = {370--436},
      ISSN = {0179-5376,1432-0444},
   MRCLASS = {37K10 (39A36 82B20)},
  MRNUMBER = {4865926},
       DOI = {10.1007/s00454-024-00640-2},
       URL = {https://doi.org/10.1007/s00454-024-00640-2},
}

@article {MR4827588,
    AUTHOR = {Gubbiotti, Giorgio and Kels, Andrew P. and Viallet, Claude-M.},
     TITLE = {Algebraic entropy for hex systems},
   JOURNAL = {Nonlinearity},
  FJOURNAL = {Nonlinearity},
    VOLUME = {37},
      YEAR = {2024},
    NUMBER = {12},
     PAGES = {Paper No. 125007, 24},
      ISSN = {0951-7715,1361-6544},
   MRCLASS = {37K60 (37B40 37F80 39A36)},
  MRNUMBER = {4827588},
       DOI = {10.1088/1361-6544/ad88cd},
       URL = {https://doi.org/10.1088/1361-6544/ad88cd},
}

@article {MR2481234,
    AUTHOR = {Viallet, Claude M.},
     TITLE = {Integrable lattice maps: {$Q_V$}, a rational version of
              {$Q_4$}},
   JOURNAL = {Glasg. Math. J.},
  FJOURNAL = {Glasgow Mathematical Journal},
    VOLUME = {51},
      YEAR = {2009},
    NUMBER = {A},
     PAGES = {157--163},
      ISSN = {0017-0895,1469-509X},
   MRCLASS = {37K10 (37K60 39A10 52C99)},
  MRNUMBER = {2481234},
       DOI = {10.1017/S0017089508004874},
       URL = {https://doi.org/10.1017/S0017089508004874},
}

@article {MR2392894,
    AUTHOR = {Hietarinta, Jarmo and Viallet, Claude},
     TITLE = {Searching for integrable lattice maps using factorization},
   JOURNAL = {J. Phys. A},
  FJOURNAL = {Journal of Physics. A. Mathematical and Theoretical},
    VOLUME = {40},
      YEAR = {2007},
    NUMBER = {42},
     PAGES = {12629--12643},
      ISSN = {1751-8113,1751-8121},
   MRCLASS = {37K10 (35Q53 37K60 39A10)},
  MRNUMBER = {2392894},
MRREVIEWER = {Matteo\ Petrera},
       DOI = {10.1088/1751-8113/40/42/S09},
       URL = {https://doi.org/10.1088/1751-8113/40/42/S09},
}

@article {MR4906745,
    AUTHOR = {Hietarinta, Jarmo},
     TITLE = {Degree growth of lattice equations defined on a {$3\times3$}
              stencil},
   JOURNAL = {Open Commun. Nonlinear Math. Phys.},
  FJOURNAL = {Open Communications in Nonlinear Mathematical Physics},
      YEAR = {2024},
     PAGES = {19},
      ISSN = {2802-9356},
   MRCLASS = {37K60 (35Q53)},
  MRNUMBER = {4906745},
}

@article {MR4007163,
    AUTHOR = {Benedetto, Robert and Ingram, Patrick and Jones, Rafe and
              Manes, Michelle and Silverman, Joseph H. and Tucker, Thomas
              J.},
     TITLE = {Current trends and open problems in arithmetic dynamics},
   JOURNAL = {Bull. Amer. Math. Soc. (N.S.)},
  FJOURNAL = {American Mathematical Society. Bulletin. New Series},
    VOLUME = {56},
      YEAR = {2019},
    NUMBER = {4},
     PAGES = {611--685},
      ISSN = {0273-0979,1088-9485},
   MRCLASS = {37P05 (11G50 37P15 37P20 37P25 37P30 37P45 37P55)},
  MRNUMBER = {4007163},
       DOI = {10.1090/bull/1665},
       URL = {https://doi.org/10.1090/bull/1665},
}

@article {MR1704282,
    AUTHOR = {Bellon, M. P. and Viallet, C.-M.},
     TITLE = {Algebraic entropy},
   JOURNAL = {Comm. Math. Phys.},
  FJOURNAL = {Communications in Mathematical Physics},
    VOLUME = {204},
      YEAR = {1999},
    NUMBER = {2},
     PAGES = {425--437},
      ISSN = {0010-3616,1432-0916},
   MRCLASS = {37D45 (37A35 37C99 39A12)},
  MRNUMBER = {1704282},
MRREVIEWER = {Remo\ Badii},
       DOI = {10.1007/s002200050652},
       URL = {https://doi.org/10.1007/s002200050652},
}

@article {MR3534837,
    AUTHOR = {Gekhtman, Michael and Shapiro, Michael and Tabachnikov, Serge
              and Vainshtein, Alek},
     TITLE = {Integrable cluster dynamics of directed networks and pentagram
              maps},
   JOURNAL = {Adv. Math.},
  FJOURNAL = {Advances in Mathematics},
    VOLUME = {300},
      YEAR = {2016},
     PAGES = {390--450},
      ISSN = {0001-8708,1090-2082},
   MRCLASS = {37J35 (13F60 37K10)},
  MRNUMBER = {3534837},
MRREVIEWER = {Yuri\ B.\ Suris},
       DOI = {10.1016/j.aim.2016.03.023},
       URL = {https://doi.org/10.1016/j.aim.2016.03.023},
}

@article {MR3483131,
    AUTHOR = {Glick, Max and Pylyavskyy, Pavlo},
     TITLE = {{$Y$}-meshes and generalized pentagram maps},
   JOURNAL = {Proc. Lond. Math. Soc. (3)},
  FJOURNAL = {Proceedings of the London Mathematical Society. Third Series},
    VOLUME = {112},
      YEAR = {2016},
    NUMBER = {4},
     PAGES = {753--797},
      ISSN = {0024-6115,1460-244X},
   MRCLASS = {13F60 (37J35 51A05)},
  MRNUMBER = {3483131},
MRREVIEWER = {Boris\ A.\ Khesin},
       DOI = {10.1112/plms/pdw007},
       URL = {https://doi.org/10.1112/plms/pdw007},
}

@article {MR4363581,
    AUTHOR = {Dang, Nguyen-Bac and Herrig, Thorsten},
     TITLE = {Dynamical degrees of automorphisms on abelian varieties},
   JOURNAL = {Adv. Math.},
  FJOURNAL = {Advances in Mathematics},
    VOLUME = {395},
      YEAR = {2022},
     PAGES = {Paper No. 108082, 43},
      ISSN = {0001-8708,1090-2082},
   MRCLASS = {11G10 (11R52 14K05)},
  MRNUMBER = {4363581},
MRREVIEWER = {Thomas\ Ward},
       DOI = {10.1016/j.aim.2021.108082},
       URL = {https://doi-org.ezp-prod1.hul.harvard.edu/10.1016/j.aim.2021.108082},
}

@code{code,
    AUTHOR = {Weinreich,Max},
    TITLE = {Degree growth of skew pentagram maps},
    note = {Git{H}ub Repository, \url{https://github.com/MaxWeinreich/SkewPentagramMaps}},
    YEAR = 2025,
}
\end{document}